\numberwithin{equation}{section}
\tikzstyle{decision} = [diamond, draw, fill=blue!20, 
\tikzstyle{block} = [rectangle, draw, fill=blue!20, 
\tikzstyle{line} = [draw, -latex']
\tikzstyle{cloud} = [draw, ellipse,fill=red!20, node distance=3cm,
\tikzset{main node/.style={circle,fill=blue!20,draw,minimum size=1cm,inner sep=0pt},  }
\newcommand{\la}{\langle}
\newcommand{\ra}{\rangle}
\newcommand{\ts}{\mathsf{T}}
\newcommand\oast{\stackMath\mathbin{\stackinset{c}{0ex}{c}{0ex}{\ast}{\bigcirc}}}
\newcommand{\ba}{\begin{array}}
\newcommand{\ea}{\end{array}}
\newcommand{\be}{\begin{equation}}
\newcommand{\ee}{\end{equation}}
\newcommand{\bea}{\begin{eqnarray}}
\newcommand{\eea}{\end{eqnarray}}
\newcommand{\beaa}{\begin{eqnarray*}}
\newcommand{\eeaa}{\end{eqnarray*}}
\def\cE{{\mathcal E}}
\def\hR{\mathbb{R}}
\def\q{\quad}
\def\qq{\qquad}
\def\pa{\partial}
\def\qed{ \hfill \vrule width.25cm height.25cm depth0cm\smallskip}
\newcommand{\basa}{\begin{assumption}}
\newcommand{\easa}{\end{assumption}}
\newcommand{\bas}{\begin{assum}}
\newcommand{\eas}{\end{assum}}
\def\1{{\bf 1}}
\def\:{\!:\!}
\begin{document}
\title[]{Exponential Entropy dissipation for weakly self-consistent Vlasov-Fokker-Planck equations}
\author[Bayraktar]{Erhan Bayraktar$^\sharp$}
\author[Feng]{Qi Feng$^*$}
\author[Li]{Wuchen Li$^\dagger$}
\thanks{$^\sharp$Department of
Mathematics, University of Michigan, Ann Arbor, MI, 48109; Email: erhan@umich.edu.}
\thanks{$^*$Department of
Mathematics, Florida State University, Tallahassee, FL, 32306; Email: qfeng2@fsu.edu,  qif@umich.edu.}
\thanks{$^\dagger$Department of Mathematics, University of South Carolina, Columbia, SC 29208; Email: wuchen@mailbox.sc.edu.}
\keywords{Auxiliary Mean--field Fisher information functional; information Gamma calculus; Mean-field information Hessian matrix.}  
\thanks{E. Bayraktar is partially supported by the National Science Foundation under grant DMS-2106556 and by the Susan M. Smith chair. Q. Feng is partially supported by the National Science Foundation under grant DMS-2306769. W. Li is supported by AFOSR MURI FA9550-18-1-0502,  AFOSR YIP award No. FA9550-23-1-0087, NSF RTG: 2038080, and NSF DMS-2245097.}

\begin{abstract}
We study long-time dynamical behaviors of weakly self-consistent Vlasov-Fokker-Planck equations. We introduce Hessian matrix conditions on mean-field kernel functions, which characterizes the exponential convergence of solutions in $L^1$ distances. 
The matrix condition is derived from the dissipation of a selected Lyapunov functional, namely auxiliary Fisher information functional. We verify proposed matrix conditions in examples. 
\end{abstract}
\maketitle
\section{Introduction}
Weakly self-consistent Vlasov-Poisson-Fokker-Planck equations \cite{bolley2010trend, guillin2021kinetic,villani2006} play essential roles in mathematical physics and probability with applications in modeling and machine learning sampling problems. The equation describes the probability density's evolution of particles, which interact with each other from interaction energies while under white noise perturbations. 

Consider a mean--field underdamped Langevin diffusion process 
\bea\label{variable underdamped langevin}
\begin{cases}
dx_t&= v_tdt\nonumber \\
dv_t&= -v_t dt-(\int_{\T^{d}\times\hR^d}\nabla_x W(x_t,y)f(t,y,\widetilde v)d\widetilde vdy+\nabla_xU(x_t) )dt+\sqrt{2} dB_t,
\end{cases}
\eea
where $(x_t,v_t)\in \mathbb{T}^d\times \mathbb{R}^d$ presents an identical particle's position and velocity, $\mathbb{T}^d$ is a $d$ dimensional torus representing a position domain, and $B_t$ is a standard Brownian motion in $\hR^d$. Each identical particle interacts with each other through a mean field interaction potential $W$ and a confinement potential $U$. We assume that $W\in  C^{\infty}(\T^{d}\times \T^d)$ is a symmetric kernel function, that is, $W(x,y)=W(y,x)$. Denote $f=f(t,x,v)$ as the probability density function of the stochastic process $(x_t,v_t)$. The density function $f$ follows a non-linear Fokker-Planck equation:
\bea\label{Kinetic FP}
\pa_tf+v\cdot \nabla_xf-(\int_{\T^{d}\times\hR^d}\nabla_x W(x,y)f(t,y,\widetilde v)d\widetilde vdy+\nabla_xU(x))\cdot\nabla_vf=\nabla_v\cdot( f v) +\nabla_v\cdot(\nabla_vf).\nonumber\\
\eea
An equilibrium of equation \eqref{Kinetic FP} satisfies  
\begin{equation*}
    f_{\infty}(x,v)=\frac{1}{Z}e^{-\frac{\|v\|^2}{2}-\int_{\T^{d}\times\hR^d} W(x,y)f_{\infty}(y,\tilde v)dyd\tilde v -U(x)},
\end{equation*}
where $Z=\int_{\T^d\times\hR^d} e^{-\frac{\|v\|^2}{2}-\int_{\T^{d}\times\hR^d} W(x,y)f_{\infty}(y,\tilde v)dyd\tilde v-U(x)} dxdv<+\infty$ is a normalization constant. 
This is known as a nonlinear Gibbs distribution. We are interested in studying long-time behaviors of density functions. How fast does function $f$ converge to $f_\infty$? 

In this paper, we establish an exponential convergence result for the solution of \eqref{Kinetic FP} in {both functional  free energy and} $L^1$ distances. Our method follows from a Lyapunov method, where the Lyapunov functional is selected as auxiliary Fisher information functionals. From the dissipation of Lyapunov functionals along with PDE \eqref{Kinetic FP}, we derive a matrix condition, which guarantees the exponential convergence decay result in {both functional free energy and}  $L^1$ distances. Explicit examples are studied. 

 In the literature, various properties of the Vlasov-Fokker-Planck equation have been studied, e.g.   \cite{Bouchut, BouchutDolbeault, carrilloSoler, Degond, esposito2010stability}. The original Vlasov equation involves the inverse of the Laplacian operator as the interaction kernel function. For simplicity, we only focus on the weakly self-consistent kernel functions, where $W$ is a given smooth function.  The existence of a smooth solution and the regularity property have recently been studied in \cite{cesbron2023}, see also the general approach regarding the regularity properties in \cite{villani2006}[Appendix A.21].   The convergence for particle systems with mean field interactions has been studied in \cite{guillin2019uniform, guillin2021kinetic}. And \cite{guillin2021kinetic} proves the exponential convergence in $H^1$ norms;  \cite{bolley2010trend, wang2021exponential} show exponential convergence results in Wasserstein-$2$ type distances. In addition, \cite{guillin2021convergence} studies the exponential contraction of the solution in the Wasserstein-$1$ distance for nonconvex confinement potential energies. 
{ We remark that there are comparison studies and discussions between $H^1$ and $L^1$ for the Fokker-Planck equations; see the detailed argument in \cite{MC2000}. One important fact of the $L^1$ norm is that this ensures the density has finite mass in physics.
 In particular, the $L^1$ distance is closely related to the Helmholtz free energy for physical systems through the Csisz\`ar-Kullback inequality or the Pinsker inequality. Our convergence analysis of the functional  free energy is crucial for statistical physics-oriented equations, e.g.: spatially homogeneous Fokker-Planck-Landau equation in plasma physics \cite{desvillettes2000}, Fokker-Planck equation for granular media \cite{benedetto1998non}, etc.}
Furthermore, our methods are closely related to but technically different from Villani's hypocoercivity methods \cite{villani2006}. Villani's methods estimate the first-order dissipation, which prove the $O(t^{-\infty})$ decay for $W(x,y)=W(x-y)$; see \cite[Theorem 56]{villani2006}. Meanwhile, we estimate the second-order dissipation, and obtain a Hessian matrix condition of interaction kernel function $W$ and potential function $U$ to determine the exponential convergence result. As in \cite{FengLi2023, FengLi2021}, we develop entropy dissipation methods \cite{arnold2000generalized, arnold2014sharp,arnold2000large, arnold2001convex, bakry1985,baudoin2016curvature, calogero2012exponential, carrillo2003kinetic, CLZ,Li2021_transportb} for equation \eqref{Kinetic FP}.     
It is worth mentioning that the convergence analysis of mean-field Langevin dynamics and nonlinear Fokker-Planck equations are important in AI (artificial intelligence) sampling problems \cite{carrillo2021consensus, garbuno2020interacting, hu2019mean,kazeykina2020ergodicity, li2021controlling}.  This is to design mean-field Markov--Chain--Monte--Carlo (MCMC) sampling algorithms. The convergence analysis of $f$ towards $f_\infty$ plays a key role in AI theory. It helps in designing algorithmic reliable kinetic sampling methods \cite{FengLi2021,li2021controlling,ma2019there} in Bayesian inverse problems. In this direction, our result estimates the exponential decay rates in {both functional free energy and} $L^1$ distance for kinetic type degenerate mean-field stochastic processes. { Our explicit condition for the potential function $V$ and the interaction kernel $W$ are more straightforward to verify in the applications than those using the Logarithmic Sobolev type inequalities as the assumption.}

This paper is organized as follows. In Section \ref{section2}, we present the main result of this paper. In particular, we give the Hessian matrix conditions for \eqref{Kinetic FP}, under which we establish the global exponential convergence results in time in both the functional free energy and the norm $L^1$. In section \ref{section3}, we verify the proposed conditions in examples.  
In sections \ref{section4} and \ref{section5}, we provide the proofs of the main results.

\section{Main results}\label{section2}
In this section, we present the main result of this paper. We first introduce the notions of free energy, Fisher information, and an auxiliary functional, which are the Lyapunov functionals in this paper. We next present the main theorem, which holds under a mean field information matrix condition. This is based on a Lyapunov method, under which we derive an exponential Lyapunov constant for PDE \eqref{Kinetic FP}. We last present examples. 
\subsection{Notations}
We briefly introduce the analytical property for the solution of PDE \eqref{Kinetic FP}. 
Assume that $W(x,y)=W(y,x)$ and $W\in C^{2}(\mathbb{T}^d\times\mathbb{T}^d)$. Denote $f_0=f_0(x,v)$ as a probability density on $\Omega:=\mathbb{T}^d\times\mathbb{R}^d$, such that all moments of density function $f_0$ are finite, that is, $\int_\Omega \|v\|^kf_0(x,v)dxdv<+\infty$, for all $k\in \mathbb Z_+$. We assume that there exists a unique smooth solution $f = f(t,x,v)$ of equation \eqref{Kinetic FP}. See details in \cite{villani2006}. 

We consider a Lyapunov functional for \eqref{Kinetic FP}, which is often named free energy. For convenience of presentation, denote a probability density space supported on $\Omega=\mathbb{T}^d\times\mathbb{R}^d$.
\begin{equation*}
    \mathcal{P}=\Big\{f\in L^1(\Omega)\colon \int_{\Omega}f(x,v)dxdv=1,\quad f\geq 0\Big\}. 
\end{equation*}
\begin{definition}[Free energy]
Define a functional $\mathcal{E}\colon \mathcal{P}\rightarrow\mathbb{R}$ as
\bea\label{free energy}
\mathcal{E}(f)&=&\int_{\Omega} f(x,v)\log f(x,v) dxdv+ \int_{\Omega}  \frac{1}{2}\|v\|^2f(x,v)dxdv\nonumber \\
&&+\frac{1}{2}\int_{\Omega\times\Omega} W(x,y)f(x,v)f(y,\widetilde v)dx dvdyd\widetilde v+\int_{\Omega}U(x)f(x,v)dxdv.
\eea
\end{definition}
In this paper, we study the convergence behavior of \eqref{Kinetic FP} through functional $\mathcal{E}(f)$. We check that when $f_{\infty}$ is the minimizer of $\mathcal{E}$, then 
\begin{equation*}
\frac{\delta}{\delta f(x,v)}\mathcal{E}(f)|_{f=f_{\infty}}=\log f_\infty(x,v)+1+\frac{1}{2}\|v\|^2+\int_\Omega W(x,y)f_\infty(y,\tilde v)dyd\tilde v+U(x)=C, 
\end{equation*}
where $\frac{\delta}{\delta f}$ is the $L^2$ first variation operator w.r.t. $f$, and $C$ is a constant. In other words,  
\begin{equation*}
f_{\infty}(x,v)=\frac{1}{Z}e^{-\frac{\|v\|^2}{2}-\int_{\Omega} W(x,y)f_{\infty}(y,\tilde v)dyd\tilde v-U(x)},
\end{equation*}
where $Z=\int_{\Omega} e^{-\frac{\|v\|^2}{2}-\int_{\Omega} W(x,y)f_{\infty}(y,\tilde v)dyd\tilde v-U(x)} dxdv<+\infty$ is a normalization constant. 
In literature, we note that functional $\mathcal{E}(f)$ is often named the {\em free energy}, and $f_\infty$ is called a {\em nonlinear Gibbs distribution} \cite{carlen2003free, esposito2010stability}. 

In this paper, we mainly study the long time dynamical behavior of $f(t,x,v)$ for general interaction kernel function $W$ and potential function $U$. In particular, we shall investigate that how fast does the Lyapunov functional $\mathcal{E}(f(t,\cdot))$ converge to $\mathcal{E}(f_{\infty})$.  

To study this convergence, we introduce some necessary notations and functionals. Denote $\mathsf I_d\in\hR^{d\times d}$ as the identity matrix.
Let
\bea
\label{matrix a z}
a=\begin{pmatrix}
0
\\
\mathsf I_d
\end{pmatrix}\in \hR^{2d\times d},\quad z=\begin{pmatrix}z_1 \mathsf I_d\\ z_2\mathsf I_d\end{pmatrix}\in \hR^{2d\times d}, 
\eea
where $z_1,z_2\in\hR$ are two given constants. 
Using above matrices, we define the following functionals to characterize the decay of Lyapunov functional $\mathcal{E}$. 
Denote 
\begin{equation*}
    \frac{\delta}{\delta f(x,v)}\mathcal{E}(f)=\log f(x,v)+1+\frac{1}{2}\|v\|^2+\int_\Omega W(x,y)f(y,\tilde v)dyd\tilde v+U(x). 
\end{equation*}
\begin{definition}[Fisher information functionals]
Define a functional $\mathcal{DE}_a\colon \mathcal{P}\rightarrow\mathbb{R}_+$ as
\bea\label{fisher}
\mathcal{DE}_a(f)&:=&\int_{\Omega} \la\nabla_{x,v} \frac{\delta}{\delta f(x,v)}\mathcal{E}(f), aa^{\ts}\nabla_{x,v} \frac{\delta}{\delta f(x,v)}\mathcal{E}(f)\ra f(x,v) dxdv.
\eea 
Define an auxiliary functional $\mathcal{DE}_z\colon \mathcal{P}\rightarrow\mathbb{R}_+$ as 
\bea\label{fisher z}
\mathcal{DE}_z(f)&:=&\int_{\Omega} \la\nabla_{x,v} \frac{\delta}{\delta f(x,v)}\mathcal{E}(f), zz^{\ts}\nabla_{x,v} \frac{\delta}{\delta f(x,v)}\mathcal{E}(f)\ra f(x,v) dxdv.
\eea
\end{definition}
It is known that $\mathcal{DE}_a$, named ``Fisher information functional'', equals to the decay of free energy $\mathcal{E}$ along with the solution of PDE \eqref{Kinetic FP}. In other words, 
\begin{equation}\label{EV}
\frac{d}{dt}\mathcal{E}(f(t,\cdot,\cdot))=-\mathcal{DE}_a(f(t,\cdot,\cdot))\leq 0.     
\end{equation}
 This result is stated in Lemma \ref{lemma energy decay}. We note that functional $\mathcal{DE}_a$ itself can not guarantee the $L^1$ decay of the solution, due to the degeneracy of the subelliptic operator in PDE \eqref{Kinetic FP}. To overcome this degeneracy issue, we construct an additional functional $\mathcal{DE}_z$. We call $\mathcal{DE}_z$ the ``auxiliary Fisher information functional''.
Shortly, we demonstrate that the designed auxiliary functional $\mathcal{DE}_z$ is useful in establishing the decay rate of the degenerate subelliptic operator in \eqref{Kinetic FP}. 
\subsection{Main result}
We are ready to present the main result. We first provide a matrix eigenvalue assumption. 
\begin{definition}[Mean--field information matrix]\label{definition mean field info matrix}
Define a symmetric matrix function $\mathfrak R\in\mathbb{R}^{4d\times 4d}$, such that
\beaa 
\mathfrak R(z,x,y)=\frac{1}{2}\begin{pmatrix}
\mathsf{A}(x,y)&\mathsf{B}(x,y)\\
\mathsf{B}(x,y)&\mathsf{A}(y,x)
\end{pmatrix},
\eeaa 
where $\mathsf{A}$, $\mathsf{B}\in\mathbb{R}^{2d\times 2d}$ 
are defined below: Denote 
$$V(x,y)=U(x)+W(x,y),$$
\beaa
\mathsf A(x,y)&=&\begin{pmatrix}
	z_1z_2\mathsf I_d& \frac{1}{2}[(1+z_1z_2+z_2^2)\mathsf I_d-z_1^2\nabla^2_{xx}V(x,y)]\\
	\frac{1}{2}[(1+z_1z_2+z_2^2)\mathsf I_d-z_1^2\nabla^2_{xx}V(x,y)]& (1+z_2^2)\mathsf I_d-z_1z_2\nabla^2_{xx} V(x,y)
	\end{pmatrix},\\
	\mathsf A(y,x)&=&\begin{pmatrix}
	z_1z_2\mathsf I_d& \frac{1}{2}[(1+z_1z_2+z_2^2)\mathsf I_d-z_1^2\nabla^2_{yy}V(y,x)]\\
	\frac{1}{2}[(1+z_1z_2+z_2^2)\mathsf I_d-z_1^2\nabla^2_{yy}V(y,x)]& (1+z_2^2)\mathsf I_d-z_1z_2\nabla^2_{yy} V(y,x)
	\end{pmatrix},
	\eeaa
 and	
\beaa	
	\mathsf B(x,y)=\begin{pmatrix}
0&-\frac{z_1^2}{2}\nabla^2_{xy}W(x,y)\\
	-\frac{z_1^2}{2}\nabla^2_{xy}W(x,y)&-z_1z_2\nabla^2_{xy}W(x,y)
\end{pmatrix}.
\eeaa
\end{definition}

\begin{assum}[Mean--field information matrix condition]\label{main assumption} 
Assume that there exists a constant $\lambda>0$, such that
\bea\label{lambda}
\mathfrak R(z,x,y)\succeq \lambda \begin{pmatrix}
    aa^{\ts}+zz^{\ts}&0\\
    0&aa^{\ts}+zz^{\ts}
\end{pmatrix},
\eea
where $a$, $z$ are defined in \eqref{matrix a z}, such that
\beaa 
aa^{\ts}+zz^{\ts}=\begin{pmatrix}
	z_1^2\mathsf I_d&z_1z_2\mathsf I_d\\
	z_1z_2 \mathsf I_d&(1+z_2^2)\mathsf I_d
\end{pmatrix}.
\eeaa 
\end{assum}
Under the above matrix eigenvalue condition, we next prove the following main theorem. 
\begin{theorem}\label{main theorem} Suppose that Assumption \ref{main assumption} holds, and there exists a smooth solution $f(t,x,v)$ of \eqref{Kinetic FP}. Then the following exponential convergence result is satisfied. 
\bea\label{main_convergence}
\mathcal{E}(f(t,\cdot,\cdot))-\mathcal E(f_{\infty})\le \frac{1}{2\lambda}e^{-2\lambda t}[\mathcal{DE}_{a,z}(f_0)-\mathcal{DE}_{a,z}(f_{\infty}) ],
\eea	
where 
\bea \label{main_convergence_1}
\mathcal{DE}_{a,z}(f):=\mathcal{DE}_a(f)+\mathcal{DE}_z(f).
\eea 
\end{theorem}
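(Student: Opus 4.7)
The plan is to derive exponential decay for the augmented Fisher information $\mathcal{DE}_{a,z}$ from Assumption \ref{main assumption}, and then convert it into decay of the free energy via time--integration of the baseline identity $\frac{d}{dt}\mathcal{E}(f(t,\cdot,\cdot))=-\mathcal{DE}_a(f(t,\cdot,\cdot))$ in \eqref{EV}. Since $aa^{\ts}\preceq aa^{\ts}+zz^{\ts}$ one has $\mathcal{DE}_a(f)\le \mathcal{DE}_{a,z}(f)$; moreover, the Euler--Lagrange characterisation of $f_\infty$ gives $\nabla_{x,v}\tfrac{\delta\mathcal{E}}{\delta f}\big|_{f_\infty}\equiv 0$, so $\mathcal{DE}_a(f_\infty)=\mathcal{DE}_z(f_\infty)=0$. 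Integrating \eqref{EV} from $t$ to $T$ and letting $T\to\infty$ (using monotonicity of $\mathcal{E}(f(\cdot,\cdot,\cdot))$, the lower bound $\mathcal{E}(f_\infty)$, and the fact that $\mathcal{DE}_{a,z}(f(T))\to 0$ will force $\mathcal{E}(f(T,\cdot,\cdot))\to \mathcal{E}(f_\infty)$) yields
$$\mathcal{E}(f(t,\cdot,\cdot))-\mathcal{E}(f_\infty)=\int_t^\infty \mathcal{DE}_a(f(s,\cdot,\cdot))\,ds\;\le\; \int_t^\infty \mathcal{DE}_{a,z}(f(s,\cdot,\cdot))\,ds,$$
so the theorem reduces to showing exponential decay of $\mathcal{DE}_{a,z}(f(t,\cdot,\cdot))$ at rate $2\lambda$.

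The core of the argument is a second--order dissipation identity. Setting
$$\xi(t,x,v):=\nabla_{x,v}\frac{\delta\mathcal{E}}{\delta f(x,v)}(f(t,\cdot,\cdot))\in\mathbb{R}^{2d},$$
equation \eqref{Kinetic FP} can be recast in the compact form $\partial_t f=\nabla_{x,v}\cdot(aa^{\ts}\xi f)-\nabla_{x,v}\cdot(J\xi f)$, where the second piece is the Hamiltonian transport part. I would differentiate $\mathcal{DE}_{a,z}(f(t,\cdot,\cdot))$ by the chain rule, substitute the PDE for $\partial_t f$, and integrate by parts in $(x,v)$. The nonlocal piece $\int W(x,y)f(y,\tilde v)\,dyd\tilde v$ inside $\xi$ contributes an additional term of the form $\int \nabla_x W(x,y)\partial_t f(y,\tilde v)\,dyd\tilde v$; inserting the PDE for $\partial_t f(y,\tilde v)$ and integrating by parts in $(y,\tilde v)$ produces cross terms in $(x,y)$ carrying $\nabla^2_{xy}W(x,y)$. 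Symmetrising via $W(x,y)=W(y,x)$ organises everything into a symmetric double integral over $\Omega\times\Omega$, so that
$$-\frac{d}{dt}\mathcal{DE}_{a,z}(f(t,\cdot,\cdot))=\int_{\Omega\times\Omega}\la\eta,\mathfrak R(z,x,y)\eta\ra f(x,v)f(y,\tilde v)\,dxdvdyd\tilde v+\mathcal{Q}(f),$$
where $\eta=(\xi(x,v),\xi(y,\tilde v))\in\mathbb{R}^{4d}$ and $\mathcal{Q}(f)\ge 0$ collects the Hessian--square remainders produced by the diffusion terms. The matrix $\mathfrak R$ from Definition \ref{definition mean field info matrix} is constructed precisely so this identity holds.

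Applying Assumption \ref{main assumption} pointwise in $(x,y)$, using $\int f(y,\tilde v)\,dyd\tilde v=1$ and the $(x,v)\leftrightarrow(y,\tilde v)$ symmetry of the quadratic form, gives
$$\int_{\Omega\times\Omega}\la\eta,\mathfrak R\eta\ra f(x,v)f(y,\tilde v)\,dxdvdyd\tilde v\ge 2\lambda\int_\Omega \la\xi,(aa^{\ts}+zz^{\ts})\xi\ra f(x,v)\,dxdv=2\lambda\,\mathcal{DE}_{a,z}(f).$$
Hence $\tfrac{d}{dt}\mathcal{DE}_{a,z}(f(t,\cdot,\cdot))\le -2\lambda\,\mathcal{DE}_{a,z}(f(t,\cdot,\cdot))$, and Gr\"onwall gives $\mathcal{DE}_{a,z}(f(t,\cdot,\cdot))\le e^{-2\lambda t}\mathcal{DE}_{a,z}(f_0)$. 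Substituting into the first display and using $\int_t^\infty e^{-2\lambda s}ds=\tfrac{1}{2\lambda}e^{-2\lambda t}$ yields \eqref{main_convergence}; the $-\mathcal{DE}_{a,z}(f_\infty)=0$ can be added back to recover the stated form.

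The main obstacle is the $\Gamma_2$--type bookkeeping that produces the matrix $\mathfrak R$. Differentiating $\mathcal{DE}_{a,z}$ in time generates many terms --- quadratic combinations of $\xi$ with its first and second derivatives in $(x,v)$, together with nonlocal cross terms from the mean--field potential --- and arranging them into the precise block structure of $\mathfrak R$, with the $\nabla^2_{xx}V$, $\nabla^2_{yy}V$ diagonal blocks and the $\nabla^2_{xy}W$ off--diagonal block, requires careful integration by parts in both $x$ and $v$ together with the symmetry $W(x,y)=W(y,x)$. The remainder $\mathcal{Q}(f)$ must also be identified and shown nonnegative; this is standard in Bakry--\'Emery/$\Gamma_2$ theory but needs extra care in the degenerate, kinetic, mean--field setting, and is where the bulk of the technical work in Sections \ref{section4}--\ref{section5} presumably goes.
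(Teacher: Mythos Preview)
Your proposal is correct and follows essentially the same route as the paper: the paper likewise establishes the differential inequality $\partial_t\mathcal{DE}_{a,z}(f)\le -2\lambda\,\mathcal{DE}_{a,z}(f)$ via a second--order dissipation computation (packaged there as Lemma~\ref{lemma: main lemma-R}, with the $\Gamma_2$--type bookkeeping carried out through Lemmas~\ref{lemma: a} and~\ref{lemma: z} and Proposition~\ref{bochner formula}), then combines it with $-\frac{d}{dt}\mathcal{E}(f)=\mathcal{DE}_a(f)\le\mathcal{DE}_{a,z}(f)$ and integrates in time. The only cosmetic difference is ordering: the paper integrates $\frac{d}{ds}\mathcal{DE}_{a,z}$ from $t$ to $\infty$ and then applies Gr\"onwall, whereas you apply Gr\"onwall first and then integrate---both yield the same bound.
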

\begin{corollary}\label{cor 2}
Suppose Assumption \ref{main assumption} holds, and there exists a smooth solution $f(t,x,v)$ of \eqref{Kinetic FP}. Assume that there exists a sufficient small constant $C_{W}>0$, such that 
\bea\label{assump bound on W}
\max_{(x,y)\in\T^d\times\T^d}|W(x,y)|\le C_W.
\eea 
Then the following $L^1$ distance convergence holds. 
	\begin{equation*}
\int_\Omega|f(t,x,v)-f_{\infty}(x,v)|dxdv\le Ce^{-\lambda t}\sqrt{\mathcal{DE}_{a,z}(f_0)-\mathcal{DE}_{a,z}(f_{\infty})},
\end{equation*} 
for some constant $C>0$.
\end{corollary}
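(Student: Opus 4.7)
The plan is to reduce the claim to Theorem~\ref{main theorem} via a Csisz\'ar--Kullback--Pinsker (CKP) type inequality, handling the nonlinearity in the free energy $\mathcal{E}$ by using the smallness of $C_W$ to absorb a quadratic correction term.

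First, I would rewrite $\mathcal{E}(f)-\mathcal{E}(f_\infty)$ in terms of the usual relative entropy. Since $-\log f_\infty(x,v)=\log Z+\tfrac{1}{2}\|v\|^2+\int_\Omega W(x,y)f_\infty(y,\tilde v)\,dyd\tilde v+U(x)$, a direct computation using the probability constraints $\int f=\int f_\infty=1$ gives the identity
\begin{equation*}
\mathcal{E}(f)-\mathcal{E}(f_\infty)=H(f\,|\,f_\infty)+\frac{1}{2}\int_{\mathbb{T}^d\times\mathbb{T}^d} W(x,y)\bigl(\rho(x)-\rho_\infty(x)\bigr)\bigl(\rho(y)-\rho_\infty(y)\bigr)\,dxdy,
\end{equation*}
where $H(f|f_\infty)=\int_\Omega f\log(f/f_\infty)\,dxdv$ and $\rho(x)=\int_{\mathbb{R}^d}f(x,v)\,dv$, $\rho_\infty(x)=\int_{\mathbb{R}^d}f_\infty(x,v)\,dv$ are the position marginals. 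The key point is that the double--integral interaction term reduces to position marginals because $W$ depends only on $(x,y)$; the cancellation of the $\log Z$ and one-body terms is routine.

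Second, I would use \eqref{assump bound on W} to estimate the quadratic correction by $|\tfrac{1}{2}\iint W(\rho-\rho_\infty)(\rho-\rho_\infty)|\le \tfrac{C_W}{2}\|\rho-\rho_\infty\|_{L^1}^2\le \tfrac{C_W}{2}\|f-f_\infty\|_{L^1}^2$. The classical CKP inequality gives $\|f-f_\infty\|_{L^1}^2\le 2H(f\,|\,f_\infty)$, so combining yields
\begin{equation*}
H(f\,|\,f_\infty)\le\bigl[\mathcal{E}(f)-\mathcal{E}(f_\infty)\bigr]+C_W H(f\,|\,f_\infty).
\end{equation*}
Taking $C_W$ sufficiently small (say $C_W<1$) lets me absorb the last term and conclude $H(f\,|\,f_\infty)\le (1-C_W)^{-1}[\mathcal{E}(f)-\mathcal{E}(f_\infty)]$. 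Applying CKP once more yields $\|f-f_\infty\|_{L^1}^2\le 2(1-C_W)^{-1}[\mathcal{E}(f)-\mathcal{E}(f_\infty)]$.

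Third, I would invoke Theorem~\ref{main theorem} to replace the right-hand side by $(2\lambda)^{-1}e^{-2\lambda t}[\mathcal{DE}_{a,z}(f_0)-\mathcal{DE}_{a,z}(f_\infty)]$. Taking square roots gives the stated estimate with $C=\bigl[(1-C_W)\lambda\bigr]^{-1/2}$.

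The main obstacle is the nonlinearity: unlike the linear Fokker--Planck setting, $\mathcal{E}(f)-\mathcal{E}(f_\infty)$ is not equal to a relative entropy, and the correction term has an indefinite sign. This is precisely why the smallness condition \eqref{assump bound on W} on $C_W$ is needed — it ensures the correction can be absorbed into $H(f\,|\,f_\infty)$ via CKP. Everything else is straightforward algebra and known inequalities.
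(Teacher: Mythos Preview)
Your proposal is correct and follows essentially the same route as the paper: derive the identity $\mathcal{E}(f)-\mathcal{E}(f_\infty)=H(f\,|\,f_\infty)+\tfrac12\iint W(f-f_\infty)(f-f_\infty)$, bound the quadratic correction by $\tfrac{C_W}{2}\|f-f_\infty\|_{L^1}^2$, and combine with CKP and Theorem~\ref{main theorem}. The only cosmetic difference is the order of operations --- the paper applies CKP once to $H(f\,|\,f_\infty)$ and directly obtains $\mathcal{E}(f)-\mathcal{E}(f_\infty)\ge\tfrac{1-C_W}{2}\|f-f_\infty\|_{L^1}^2$, whereas you apply CKP twice (once to absorb the correction into $H$, once more to pass to $L^1$) --- but both yield the identical bound $\|f-f_\infty\|_{L^1}^2\le\tfrac{2}{1-C_W}[\mathcal{E}(f)-\mathcal{E}(f_\infty)]$.
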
 
\begin{remark}
The functional $\mathcal{E}$ is also significant in physics. It is the Helmholtz free energy. Thus, the convergence analysis of the kinetic Fokker-Planck equation in terms of Helmholtz free energy is crucial for statistical physics-oriented equations.
The second law of thermodynamics shows that free energy dissipation equals the negative Fisher information; see Lemma \ref{lemma energy decay}. The analysis in this paper further introduces the convergence rate of the free energy. Regarding Lyapunov's functions, there are other choices, including $H^{-1}$ distances. To our knowledge, they are not oriented for analyzing the Helmholtz-free energy. We refer interesting readers to \cite{MC2000} for the importance of $L^1$ distances and free energy estimations.
\end{remark}

\subsection{Examples}
We last present two concrete examples of $L^1$ distance exponential convergence results for different kernels $W$ and potentials $U$ in \eqref{Kinetic FP}. We leave their proofs with detailed conditions in section \ref{section3}.  

\begin{example}\label{ex3}
Assume $W(x,y)=W(y,x)$, $U(x)\neq 0$. For $z_1=1$ and $z_2=0.3$, assume 
\beaa 
\underline{\lambda}\mathsf I_{d}\preceq\nabla^2_{xx}(U(x)+W(x,y))\preceq\overline{\lambda}\mathsf I_d, \quad 2\underline{\lambda}-\overline{\lambda}^2>0.08,
\eeaa 
and for sufficiently small $\varepsilon>0$, 
\begin{equation*}
{ \|\nabla^2_{x,y} W(x,y)\|_{\mathrm{F}}=O(\varepsilon)},\quad \max_{(x,y)\in\T^d\times\T^d}|W(x,y)|<1,
\end{equation*}
where $\|\cdot\|_{\mathrm{F}}$ is the matrix Frobenius norm. Then Assumption \ref{main assumption} is satisfied. Thus the exponential convergence results in \eqref{main_convergence} and \eqref{main_convergence_1} hold.
\end{example}

\begin{example}\label{ex2}
Consider $W(x,y)=W(y,x)$, $U(x)\neq 0$. Assume 
\beaa
\begin{cases}
\nabla^2_{xy}W(x,y)=\mathsf Q^{-1}_W\textbf{Diag}\Big(\lambda_1^W,\cdots,\lambda_d^W \Big)\mathsf Q_W,\\
\nabla^2_{xx}W(x,y)= \mathsf Q^{-1}_W\textbf{Diag}\Big(\widetilde\lambda_{1}^{W},\cdots,\widetilde\lambda_{d}^W \Big)\mathsf Q_W,\\
\underline\lambda\mathsf{I}_d 	\preceq \nabla^2_{xx}U(x) 	\preceq     \overline\lambda\mathsf{I}_d,
\end{cases}
\eeaa 
where $\mathsf Q_W$ denotes the orthogonal matrix for the eigenvalue decomposition of $\nabla^2W$. Let $\underline\lambda=\overline{\lambda}=0.9$, $z_1=1$, and $z_2=0.3$. If the following condition holds,
\beaa 
-0.538< \widetilde\lambda_i^W<  0.297,\quad i=1,\cdots, d, \quad \max_{(x,y)\in\T^d\times\T^d}|W(x,y)|<1,
\eeaa 
then for sufficiently small $\lambda_i^W$, Assumption \ref{main assumption} holds true. Thus the exponential convergence results in \eqref{main_convergence} and \eqref{main_convergence_1} hold. In particular, for $d=1$, if $\widetilde\lambda_i^W=-0.12$, then $|\lambda_i^W|<10^{-3}$ is enough to guarantee Assumption \ref{main assumption}. If $W(x,y)=W(x-y)$ and $\widetilde\lambda_i^W=-\lambda_i^W$ is small enough, Assumption \ref{main assumption} holds.
\end{example}

\begin{remark}[Comparisons with \cite{carrillo2003kinetic}]
The mentioned paper studies a matrix eigenvalue condition for gradient-drift Fokker-Planck equation. In example \ref{ex2}, we work on a matrix eigenvalue condition for degenerate non-gradient drift Fokker-Planck equation. 
\end{remark}

\begin{remark}[Comparisons with \cite{guillin2021kinetic}] The paper in this remark establishes the exponential convergence results in weighted Sobolev space. Meanwhile, we show the exponential convergence results in $L^1$ distance. 
\end{remark}

\begin{remark}\label{rem 3}
The work of \cite{villani2006} analyzes the case for $U\equiv 0$ and $W(x,y)=W(x-y)$. We shall show that assumption \ref{main assumption} does not hold for constant matrices $a$ and $z$. This implies that exponential decay does not hold in this example. In this sense, our result does not improve the $O(t^{-\infty})$ convergence result in \cite{villani2006}[Theorem 56]. 
\end{remark}

\section{Verification of assumptions in examples}\label{section3}
In this section, we verify Assumption \ref{main assumption} in two examples.

\subsection{Proof of Example \ref{ex3}}
\begin{lemma}\label{lemma: W x - y}
    Assume that
\bea\label{assump: hesss W xy-1}
\begin{cases}
\nabla^2_{xy}W(x,y)=\mathsf Q^{-1}_W\textbf{Diag}\Big(\lambda_1^W,\cdots,\lambda_d^W \Big)\mathsf Q_W,\\
\nabla^2_{xx}W(x,y)+\nabla^2_{xx}U(x)=\mathsf Q_V^{-1}\textbf{Diag}\Big(\widetilde\lambda_{1}^{W},\cdots,\widetilde\lambda_{d}^W \Big)\mathsf Q_V,\\
\end{cases}
\eea 
where $\mathsf Q_W$ and $\mathsf Q_V$ denote the orthogonal matrix for the eigenvalue decomposition of $\nabla^2_{xy}W(x,y)$ and $\nabla^2_{xx}(W(x,y)+U(x))$ with $\mathsf Q_W^{-1}=\mathsf Q_W^{\ts}$ and $\mathsf Q_V^{-1}=\mathsf Q_V^{\ts}$.
\begin{itemize}
    \item[(1)] 
If there exist positive constants $z_1,z_2,\lambda_{W_{xx}}>0$, such that
\bea\label{final inequality condition}\mathsf A(x,y)\succeq \lambda_{W_{xx}}\mathsf I_{2d}, \quad \text{and}\quad 
C_1< \lambda_{W_{xx}}<C_2,
\eea 
where $\mathsf A(x,y)$ is defined in Definition \ref{definition mean field info matrix}, and 
\beaa
C_1&=&\sqrt{ \frac{z_1^2z_2^2+\frac{z_1^4}{2}+\sqrt{z_1^4z_2^4+z_1^6z_2^2}}{2}}|\lambda_i^W|,\\
C_2&=&\min\Big\{z_1z_2-|\frac{1}{2}[(1+z_1z_2+z_2^2)-z_1^2\widetilde\lambda_i^W]|,\\
&&\qq\qq (1+z_2^2)-z_1z_2\widetilde{\lambda}_i^W	-|		\frac{1}{2}[(1+z_1z_2+z_2^2)-z_1^2\widetilde{\lambda}_i^W]|\Big\},\quad i=1,\cdots, d, 
\eeaa 
then Assumption \eqref{main assumption} holds. 
\item[(2)]  Suppose there exists constants $\overline\lambda\geq\underline\lambda>0$, such that for any $(x,y)\in\mathbb{T}^d\times\mathbb{T}^d$, 
\beaa 
  && \underline\lambda\mathsf{I}_d 	\preceq \nabla^2_{xx}W(x,y)+\nabla^2_{xx}U(x) 	\preceq     \overline\lambda\mathsf{I}_d,
\eeaa 
 Assume that $\nabla^2_{xy}W(x,y)=0$, $z_1=1$, and there exist constant $z_2\in(0,\frac{1+\sqrt{5}}{2})$ and $\delta>0$, such that $\underline{\lambda}$, $\overline{\lambda}$ satisfies the following conditions:
\bea \label{d condition simple}
2\underline{\lambda}-\overline{\lambda}^2>1-\delta,\q [2(z_2-z_2^2)] \underline{\lambda}+
2z_2+2z_2^3-z_2^4-3z_2^2>\delta,
\eea 
then Assumption \ref{main assumption} holds
\end{itemize}
\end{lemma}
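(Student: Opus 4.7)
The plan is to exploit the $2\times 2$ block structure of $\mathfrak R(z,x,y)$ and reduce the $4d\times 4d$ matrix inequality \eqref{lambda} to a collection of $2\times 2$ scalar conditions indexed by the eigenvalues of $\nabla^2_{xx}V$ and $\nabla^2_{xy}W$.

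For part~(1), I first analyze the diagonal block $\mathsf A(x,y)$: every entry of $\mathsf A(x,y)$ is an affine combination of $\mathsf I_d$ and $\nabla^2_{xx}V$, so conjugation by the orthogonal matrix $\mathsf Q_V$ from the diagonalization of $\nabla^2_{xx}V$ decouples $\mathsf A(x,y)$ into $d$ scalar $2\times 2$ blocks, one per eigenvalue $\widetilde\lambda_i^W$. A Gershgorin-disc estimate (equivalently, a direct $2\times 2$ Sylvester computation) on each block produces exactly the bound $\lambda_{W_{xx}}<C_2$ stated in \eqref{final inequality condition}. The off-diagonal block $\mathsf B(x,y)$ is then handled by a Schur-complement argument: provided $\mathsf A(x,y)\succeq \lambda_{W_{xx}}\mathsf I_{2d}$ and the operator norm of $\mathsf B(x,y)$ is strictly less than $\lambda_{W_{xx}}$, the full matrix $\mathfrak R$ admits the desired lower bound with a positive slack $\lambda$. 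A direct computation of $\mathsf B^\ts \mathsf B$ per eigenvalue $\lambda_i^W$ of $\nabla^2_{xy}W$ reproduces precisely the expression under the square root in $C_1$; combined with the previous step this yields the sandwich $C_1<\lambda_{W_{xx}}<C_2$ and hence Assumption~\ref{main assumption}.

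For part~(2), the hypothesis $\nabla^2_{xy}W=0$ annihilates the off-diagonal block, so $\mathfrak R$ is block-diagonal and the inequality reduces to $\mathsf A(x,y)\succeq 2\lambda(aa^\ts+zz^\ts)$ (and its symmetric counterpart). With $z_1=1$ I diagonalize $\nabla^2_{xx}V$ and reduce to a single-variable $2\times 2$ inequality parameterized by an eigenvalue $\mu\in[\underline\lambda,\overline\lambda]$. Sylvester's criterion produces two conditions: positivity of $z_2-2\lambda$ (automatic for small $\lambda$ once $z_2>0$) and positivity of the $2\times 2$ determinant, which expands to a quadratic polynomial in $\mu$. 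Requiring this polynomial to stay positive on the entire interval $[\underline\lambda,\overline\lambda]$ with a positive margin $\delta$ translates to the two inequalities in \eqref{d condition simple}: the first controls the quadratic contribution at the worst endpoint $\overline\lambda$ (this is where the $\overline\lambda^2$ enters), and the second controls the linear-in-$\mu$ part and produces the coefficient $2(z_2-z_2^2)$. The restriction $z_2\in(0,(1+\sqrt 5)/2)$ is precisely what is needed for the algebraic compatibility of these two inequalities.

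The main obstacle arises in part~(1): since $\mathsf Q_W\neq \mathsf Q_V$ in general, the basis that diagonalizes $\mathsf A$ does not simultaneously diagonalize $\mathsf B$, so the fine per-eigenvalue analysis used for the diagonal blocks must be replaced by an operator-norm estimate on $\mathsf B$. Verifying that this rougher estimate still matches the precise threshold $C_1$ obtained from the Schur-complement computation, and carving out enough slack between $C_1$ and $C_2$ to extract an explicit $\lambda>0$, is the main technical point of the proof.
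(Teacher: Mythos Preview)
Your proposal is correct and follows essentially the same route as the paper: for part~(1), bound the diagonal blocks $\mathsf A(x,y)$ from below by $\lambda_{W_{xx}}\mathsf I_{2d}$ via a Gershgorin/Sylvester estimate (yielding $\lambda_{W_{xx}}<C_2$), then control the off-diagonal $\mathsf B$-block by the Schur-complement condition $\lambda_{W_{xx}}^2\mathsf I_{2d}-\mathsf B^2\succeq 0$ (yielding $\lambda_{W_{xx}}>C_1$); for part~(2), reduce via Schur/Sylvester to a quadratic in the eigenvalues of $\nabla^2_{xx}V$ and split it with the slack parameter $\delta$.

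One minor remark: the ``obstacle'' you flag at the end---that $\mathsf Q_W\neq\mathsf Q_V$---is in fact not an obstacle at all, and the paper does not treat it as one. Because the additive decomposition places $\mathsf A$ (depending only on $\nabla^2_{xx}V$) and $\mathsf B$ (depending only on $\nabla^2_{xy}W$) in separate pieces, the two orthogonal bases never interact: the lower bound $\mathsf A\succeq\lambda_{W_{xx}}\mathsf I_{2d}$ is basis-independent, and the remaining condition $\lambda_{W_{xx}}^2\mathsf I_{2d}-\mathsf B^2\succeq 0$ is analyzed purely in the $\mathsf Q_W$-basis. The paper applies a second Schur complement to $\lambda_{W_{xx}}^2\mathsf I_{2d}-\mathsf B^2$ per $\lambda_i^W$-sector and recovers exactly $C_1$, so no coarser operator-norm estimate is needed (and in any case, since $\mathsf B$ is symmetric, the operator-norm bound $\|\mathsf B\|_{\mathrm{op}}\le\lambda_{W_{xx}}$ is equivalent to the paper's condition and yields the same $C_1$).
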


\begin{proof}
\noindent \textbf{Case 1:}
According to Definition \ref{definition mean field info matrix}, we have 
\beaa 
\mathfrak R(z,x,y)&=&\frac{1}{2}\begin{pmatrix}
\mathsf{A}(x,y)&\mathsf{B}(x,y)\\
\mathsf{B}(x,y)&\mathsf{A}(y,x)
\end{pmatrix}\\
&=&\frac{1}{2}\begin{pmatrix}
0&\mathsf{B}(x,y)\\
\mathsf{B}(x,y)&0
\end{pmatrix}+\frac{1}{2}\begin{pmatrix}
\mathsf{A}(x,y)&0\\
0&\mathsf{A}(y,x)
\end{pmatrix}\\
&=&\frac{1}{2}(\widetilde{\mathcal J}_1+\widetilde{\mathcal J}_2).
\eeaa 
We want to get a positive lower bound for the spectrum of $\widetilde{J}_2$. Applying the Gershgorin circle theorem. it is sufficient to require the following condition, for $V(x,y)=W(x,y)+U(x)$, 
\beaa
\begin{pmatrix}
	z_1z_2\mathsf I_d& \frac{1}{2}[(1+z_1z_2+z_2^2)\mathsf I_d-z_1^2\nabla^2_{xx}V(x,y)]\\
	\frac{1}{2}[(1+z_1z_2+z_2^2)\mathsf I_d-z_1^2\nabla^2_{xx}V(x,y)]& (1+z_2^2)\mathsf I_d-z_1z_2\nabla^2_{xx} V(x,y)
	\end{pmatrix}-\lambda_{W_{xx}}\mathsf I_{2d}\succeq 0.
	\eeaa  
such that there exists constant $\lambda_{W_{xx}}>0$ satisfying
\beaa 
\widetilde{\mathcal{J}}_2\succeq \lambda_{W_{xx}}\begin{pmatrix}
 \mathsf I_{2d}&0\\
0& \mathsf I_{2d}
\end{pmatrix}.
\eeaa 
According to the eigenvalue decomposition of $\nabla^2_{xx}W(x,y)+\nabla^2_{xx}U(x)$, it is thus sufficient to prove the following inequalities, for $i=1,\cdots,d$,
	\bea \label{Gershgorin inequality}
		z_1z_2-| \frac{1}{2}[(1+z_1z_2+z_2^2)-z_1^2\widetilde\lambda_i^W]|\ge \lambda_{W_{xx}},\nonumber \\
(1+z_2^2)-z_1z_2\widetilde{\lambda}_i^W	-|		\frac{1}{2}[(1+z_1z_2+z_2^2)-z_1^2\widetilde{\lambda}_i^W]|\ge\lambda_{W_{xx}}.
	\eea 
 
Given such a positive $\lambda_{W_{xx}}>0$, we  analyze the first term $\widetilde{\mathcal J}_1$, such that
\bea \label{B + lambda W}
\begin{pmatrix}
0&\mathsf{B}(x,y)\\
\mathsf{B}(x,y)&0
\end{pmatrix}+\lambda_{W_{xx}}\begin{pmatrix}
 \mathsf I_{2d}&0\\
0& \mathsf I_{2d}
\end{pmatrix} \succeq 0.
\eea 
According to Schur complement for symmetric matrix function (see \cite{vandenberghe2004convex}[Appendix A.5]), this is equivalent to the following condition:
\bea\label{schur condition-1}
\begin{cases}
    \lambda_{W_{xx}}>0; \\
    \lambda_{W_{xx}}^2\mathsf I_{2d}-\mathsf B^2(x,y)\succeq 0.
\end{cases}
\eea 
Recall 
\beaa 
\mathsf B(x,y)&=& \begin{pmatrix}
0&-\frac{z_1^2}{2}\nabla^2_{xy}W(x,y)\\
	-\frac{z_1^2}{2}\nabla^2_{xy}W(x,y)&-z_1z_2\nabla^2_{xy}W(x,y)
\end{pmatrix} \\
&=& \begin{pmatrix}
0 & \mathsf Q^{-1}_W\Big[\textbf{Diag}\Big\{\frac{-z_1^2}{2}\lambda_i^W\Big\}_{i=1}^d \Big] \mathsf Q_W\\
\mathsf	Q^{-1}_W\Big[\textbf{Diag}\Big\{\frac{-z_1^2}{2}\lambda_i^W\Big\}_{i=1}^d \Big] \mathsf Q_W&	\mathsf Q^{-1}_W\Big[\textbf{Diag}\Big\{-z_1z_2\lambda_i^W\Big\}_{i=1}^d \Big] \mathsf Q_W
\end{pmatrix}\\
&=& \begin{pmatrix}
0&\mathsf B_{12}\\
\mathsf B_{21}& \mathsf B_{22}
\end{pmatrix}.
\eeaa 
We have
\beaa 
\lambda^2_{W_{xx}}\mathsf I_{2d}-\mathsf B^2(x,y)
&=&\begin{pmatrix}
\lambda^2_{W_{xx}}\mathsf I_d-\mathsf B_{12}^2& -\mathsf B_{12}\mathsf B_{22}\\
-\mathsf B_{12}\mathsf B_{22}&\lambda^2_{W_{xx}}\mathsf I_d-\mathsf B_{22}^2-\mathsf B_{12}^2
\end{pmatrix}=\begin{pmatrix}
\mathsf C_{11}^W& \mathsf C_{12}^W\\
\mathsf C_{12}^W& \mathsf C_{22}^W
\end{pmatrix},
\eeaa 
where we denote 
\beaa 
\mathsf C_{11}^W&=& \mathsf Q_W^{-1}\Big[\textbf{Diag}\Big\{
\lambda^2_{W_{xx}}- \frac{z_1^4}{4}(\lambda_i^W)^2
 \Big\}_{i=1}^d\Big]\mathsf Q_W\\
\mathsf C_{12}^W&=& \mathsf Q_W^{-1}\Big[\textbf{Diag}\Big\{ -z_1z_2\frac{z_1^2}{2}(\lambda_i^W)^2 \Big\}_{i=1}^d\Big]\mathsf Q_W\\
\mathsf C_{22}^W&=& \mathsf Q_W^{-1}\Big[\textbf{Diag}\Big\{ \lambda_{W_{xx}}^2-(\lambda_i^W)^2(z_1^2z_2^2+\frac{z_1^4}{4}) \Big\}_{i=1}^d\Big]\mathsf Q_W.
\eeaa 
Applying Schur complement, it is equivalent to, for $i=1,\cdots, d$,
\bea 
\label{condition 2 U=0}
\begin{cases}
\lambda_{W_{xx}}^2-(\lambda_i^W)^2\frac{z_1^4}{4}\ge 0;\\
[\lambda_{W_{xx}}^2-(\lambda_i^W)^2(z_1^2z_2^2+\frac{z_1^4}{4})][\lambda_{W_{xx}}^2-\frac{z_1^4}{4}(\lambda_i^W)^2]-z_1^2z_2^2\frac{z_1^4}{4}(\lambda_i^W)^4\ge 0.
\end{cases}
\eea 
This is equivalent to 
\bea \label{inequality for lambda W}
\begin{cases} \displaystyle
\frac{\lambda_{W_{xx}}^2}{(\lambda_i^W)^2}-(z_1^2z_2^2+\frac{z_1^4}{4})\ge 0;\\
 \displaystyle\frac{\lambda_{W_{xx}}^4}{(\lambda_i^W)^4}-(z_1^2z_2^2+\frac{z_1^4}{2})\frac{\lambda_{W_{xx}}^2}{(\lambda_i^W)^2}+\frac{z_1^8}{16}\ge 0.
\end{cases}
\eea 
Solving the second inequality, we get
\beaa 
 \frac{\lambda_{W_{xx}}^2}{(\lambda_i^W)^2}\ge  \frac{z_1^2z_2^2+\frac{z_1^4}{2}+\sqrt{z_1^4z_2^4+z_1^6z_2^2}}{2}.
\eeaa 
Notice that 
\beaa
 \frac{z_1^2z_2^2+\frac{z_1^4}{2}+\sqrt{z_1^4z_2^4+z_1^6z_2^2}}{2}>(z_1^2z_2^2+\frac{z_1^4}{4}).
\eeaa 
It is sufficient to prove the following inequality for \eqref{inequality for lambda W}:
\bea\label{est lambda 2}
 \displaystyle  \lambda_{W_{xx}}^2\ge  \frac{z_1^2z_2^2+\frac{z_1^4}{2}+\sqrt{z_1^4z_2^4+z_1^6z_2^2}}{2}(\lambda_i^W)^2.
\eea 
Thus, for $\lambda_{W_{xx}},\lambda_i^W>0$, $i=1,\cdots,d$,
combining with \eqref{Gershgorin inequality}, the matrix $\mathfrak R$ is positive definite, if the following condition holds: 
		\bea
	\begin{cases}
\lambda_{W_{xx}}\le 		z_1z_2-| \frac{1}{2}[(1+z_1z_2+z_2^2)-z_1^2\widetilde\lambda_i^W]|,\\
 \lambda_{W_{xx}}\le (1+z_2^2)-z_1z_2\widetilde{\lambda}_i^W	-|		\frac{1}{2}[(1+z_1z_2+z_2^2)-z_1^2\widetilde{\lambda}_i^W]|,\\
\displaystyle \lambda_{W_{xx}}^2\ge  \frac{z_1^2z_2^2+\frac{z_1^4}{2}+\sqrt{z_1^4z_2^4+z_1^6z_2^2}}{2}(\lambda_i^W)^2,
	\end{cases} 
	\eea
which is condition \eqref{final inequality condition}. The proof is completed.

\noindent\textbf{Case 2:}
We apply the Schur complement for symmetric matrix function $\mathfrak{R}$. The following conditions are equivalent. 
\begin{itemize}
\item[(1)]
$\mathfrak R\succeq 0$ ($\mathfrak R$ is positive definite).
\item[(2)]$\mathsf A(x,y)\succeq 0$, $(\mathsf I_{2d}-\mathsf A(x,y)\mathsf A^{-1}(x,y))\mathsf B(x,y)=0$, $\mathsf A(y,x)-\mathsf B(x,y) \mathsf A^{-1}(x,y)\mathsf B(x,y)\succeq 0$.
\end{itemize}
According to our assumption $\nabla^2_{xy}W=0$, thus $\mathsf B=0$. We only need to show that $\mathsf A(x,y)$ in positive definite. Similar arguments then apply to $\mathsf A(y,x)$. Denote 
\beaa 
\mathsf A(x,y)=\begin{pmatrix}
\mathsf A_{11}&\mathsf A_{12}\\
\mathsf A_{21}&\mathsf A_{22}
\end{pmatrix},
\eeaa 
with 
\bea\label{block for A} 
\mathsf A_{11} &=& z_1z_2\mathsf I_d,\quad  \mathsf A_{12}=\mathsf A_{21}=\frac{1}{2}[(1+z_1z_2+z_2^2)\mathsf I_d-z_1^2\nabla^2_{xx}V(x,y) ],\\
\mathsf A_{22}&=&(1+z_2^2)\mathsf I_d-z_1z_2\nabla^2_{xx} V(x,y).
\eea 
Applying the Schur complement for symmetric matrix $\mathsf A$, it is equivalent to find $z_1$,  $z_2\in\mathbb{R}^1$, such that
\begin{equation*}
    z_1z_2>0,\qquad 1+z_2^2-\overline{\lambda}z_1z_2>0,
\end{equation*}
and 
 \beaa 
 \mathsf A_{22}-\mathsf A_{12}\mathsf A_{11}^{-1}\mathsf A_{21}\succeq 0 \Longleftrightarrow  z_1z_2A_{22}-\mathsf A_{12}^2 \succeq0.
 \eeaa 
By direct computation, it is equivalent to the following condition,
\beaa
z_1z_2[(1+z_2^2)\mathsf I_{d}-z_1z_2 \nabla^2_{xx}V(x,y)] -\frac{1}{4}((1+z_1z_2+z_2^2)\mathsf I_{d}-z_1^2 \nabla^2_{xx}V(x,y))^2\succeq 0.
\eeaa
By a direct computation, it is equivalent to the following inequality:
\beaa 
-z_1^4(\nabla_{xx}^2V)^2+[2(1+z_1z_2-z_2^2)z_1^2] \nabla_{xx}^2V+[2
z_1z_2+2z_1z_2^3-1-(z_1^2+2)z_2^2-z_2^4]\mathsf I_d>0.
\eeaa 
Based on the assumption of $\nabla^2_{xx}W(x,y)+\nabla^2_{xx}U(x)=\nabla^2_{xx}V(x,y)$, we have 
\beaa 
\nabla^2_{xx}V(x,y)&=&\mathsf Q_V^{-1}\textbf{Diag}(\widetilde\lambda_1^W,\cdots,\widetilde\lambda_d^W)\mathsf Q_V.
\eeaa 
In particular, we assume $0<\underline{\lambda} \le\widetilde \lambda_1^W\le\widetilde\lambda^W_2\le \cdots\le\widetilde \lambda_d^W\le \overline{\lambda}$, where $\widetilde\lambda_1^W,\cdots,\widetilde\lambda_d^W$ are 
eigenvalues of matrix $\nabla^2_{xx} W(x,y)+\nabla^2_{xx} U(x)$, and  $\underline\lambda$, $\overline\lambda$ are lower bound and upper bound of these eigenvalues, respectively. Applying the lower and upper bound of the eigenvalues, it is sufficient to prove the following conditions: 
\bea\label{1d condition}
\begin{cases}
& z_1z_2>0,\q  (1+z_1z_2-z_2^2)>0;\\
&-z_1^4\overline{\lambda}^2+[2(1+z_1z_2-z_2^2)z_1^2] \underline{\lambda}+
2z_1z_2+2z_1z_2^3-1-(z_1^2+2)z_2^2-z_2^4>0.
\end{cases}
\eea
Let $z_1=1$, then \eqref{d condition simple} implies \eqref{1d condition}. Let $1+z_2-z_2^2>0$, for a small constant $\delta>0$, there exists $z_2>0$ such that $[2(z_2-z_2^2)] \underline{\lambda}+
2z_2+2z_2^3-z_2^4-3z_2^2>\delta$, which completes the proof for condition (1).
\qed 
\end{proof}

\begin{proof}[Proof of Example \ref{ex3}]
We provide a simple proof of Example \ref{ex3} by applying Condition (2) in Lemma \ref{lemma: W x - y}. If $\underline{\lambda}\mathsf I_{d}\preceq\nabla^2_{xx}(U(x)+W(x,y))\preceq\overline{\lambda}\mathsf I_d$, which satisfies the condition(2) in Lemma \ref{lemma: W x - y}, then as long as 
{ $\|\nabla^2_{x,y} W(x,y)\|_{\mathrm{F}}=O(\varepsilon)$  is small enough for $\varepsilon>0$}, matrix $\mathfrak R$ remains positive definite. In particular, if we pick $z_1=1$, $z_2=0.3$, $\delta=0.02$, this implies
\beaa 
2\underline{\lambda}-\overline{\lambda}^2>1-\delta,\q [2(z_2-z_2^2)] \underline{\lambda}+
2z_2+2z_2^3-z_2^4-3z_2^2>0.02,
\eeaa 
i.e. $2\underline{\lambda}-\overline{\lambda}^2>0.08$, and $0.42\underline{\lambda}+0.3759>0.02$. Hence we require that $2\underline{\lambda}-\overline{\lambda}^2>0.08$.
\end{proof}

\subsection{Proof of Example \ref{ex2}}
Similar to the previous section, we apply Schur complement to derive the positive definite condition for $\mathfrak R$.
In particular, we rewrite matrix $\mathfrak R$ in the following form, where we separate the potential function $U(\cdot)$ and the interacting potential function $W(x,y)$. 
\begin{definition}[Reformulation of Mean--field information matrix]\label{definition mean field info matrix-2}
Define a symmetric matrix function $\mathfrak R\in\mathbb{R}^{4d\times 4d}$, such that
\beaa 
\mathfrak R(z,x,y)=\frac{1}{2}\begin{pmatrix}
\mathsf{A}_1(x,y)&\mathsf{B}(x,y)\\
\mathsf{B}(x,y)&\mathsf{A}_1(y,x)
\end{pmatrix}+\frac{1}{2}\begin{pmatrix}
\mathsf A_2(x,y)&0\\
0&\mathsf A_2(y,x)
\end{pmatrix},
\eeaa 
where $\mathsf{A}_1$, $\mathsf A_2$, and $\mathsf{B}\in\mathbb{R}^{2d\times 2d}$ 
are defined below:
\beaa
\mathsf A_1(x,y)&=& \begin{pmatrix}
	0& -\frac{z_1^2}{2}\nabla^2_{xx}W(x,y)  \\
	-\frac{z_1^2}{2}\nabla^2_{xx}W(x,y)& -z_1z_2\nabla^2_{xx} W(x,y)
\end{pmatrix}, \\
\mathsf A_1(y,x)&=& \begin{pmatrix}
	0& -\frac{z_1^2}{2}\nabla^2_{yy}W(y,x)  \\
	-\frac{z_1^2}{2}\nabla^2_{yy}W(y,x)& -z_1z_2\nabla^2_{yy} W(y,x)
\end{pmatrix}, \\
\mathsf B(x,y)&=& \begin{pmatrix}
0&-\frac{z_1^2}{2}\nabla^2_{xy}W(x,y)\\
	-\frac{z_1^2}{2}\nabla^2_{xy}W(x,y)&-z_1z_2\nabla^2_{xy}W(x,y)
\end{pmatrix},
\eeaa
and 
\beaa
\mathsf A_2(x,y)&=&\begin{pmatrix}
	z_1z_2\mathsf I_d& \frac{1}{2}[(1+z_1z_2+z_2^2)\mathsf I_d-z_1^2\nabla^2_{xx}U(x)]\\
	\frac{1}{2}[(1+z_1z_2+z_2^2)\mathsf I_d-z_1^2\nabla^2_{xx}U(x)]& (1+z_2^2)\mathsf I_d-z_1z_2\nabla^2_{xx} U(x)
	\end{pmatrix},\\
	\mathsf A_2(y,x)&=&\begin{pmatrix}
	z_1z_2\mathsf I_d& \frac{1}{2}[(1+z_1z_2+z_2^2)\mathsf I_d-z_1^2\nabla^2_{yy}U(y)]\\
	\frac{1}{2}[(1+z_1z_2+z_2^2)\mathsf I_d-z_1^2\nabla^2_{yy}U(y)]& (1+z_2^2)\mathsf I_d-z_1z_2\nabla^2_{yy} U(y)
	\end{pmatrix}.
	\eeaa
\end{definition}

\begin{lemma}\label{lemma: W x - y-2}
    Assume that 
\bea\label{assump: hesss W xy-2}
\begin{cases}
\nabla^2_{xy}W(x,y)=\mathsf Q^{-1}_W\textbf{Diag}\Big(\lambda_1^W,\cdots,\lambda_d^W \Big)\mathsf Q_W,\\
\nabla^2_{xx}W(x,y)= \mathsf Q^{-1}_W\textbf{Diag}\Big(\widetilde\lambda_{1}^{W},\cdots,\widetilde\lambda_{d}^W \Big)\mathsf Q_W,\\
\underline\lambda\mathsf{I}_d 	\preceq \nabla^2_{xx}U(x) 	\preceq     \overline\lambda\mathsf{I}_d.
\end{cases}
\eea 
And there exists $z_1>0$ and $z_2>0$, such that 
\beaa 
\mathsf A_2\succeq \lambda_{U}\mathsf I_{2d},
\eeaa 
where we denote $\lambda_{U}$ as the spectrum lower bound for $\mathsf A_2$ defined in Definition \ref{definition mean field info matrix-2}. Then if $\lambda_i^W$ sufficiently small, and
\bea \label{condition on eigencalue interval}
\widetilde\lambda_i^W\in (\frac{-2\lambda_{U}(z_2+\sqrt{z_1^2+z_2^2})}{z_1^3}, \frac{2\lambda_{U}(\sqrt{z_1^2+z_2^2}-z_2)}{z_1^3}),\quad i=1,\cdots, d, 
\eea 
then Assumption \eqref{main assumption} holds.
\end{lemma}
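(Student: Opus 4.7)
The plan is to exploit the additive split given in Definition \ref{definition mean field info matrix-2}, writing $\mathfrak R=\frac{1}{2}(\mathcal J_W+\mathcal J_U)$, where $\mathcal J_U=\mathrm{diag}(\mathsf A_2(x,y),\mathsf A_2(y,x))$ depends only on the confinement potential $U$ and $\mathcal J_W$ is the $4d\times 4d$ block matrix with $\mathsf A_1$ on the diagonal and $\mathsf B$ off-diagonal (the pieces that depend on $W$ alone). Since by hypothesis $\mathsf A_2\succeq \lambda_U\mathsf I_{2d}$, one has $\mathcal J_U\succeq \lambda_U\mathsf I_{4d}$, so the problem reduces to producing a lower bound $\mathcal J_W\succeq -\lambda_U\mathsf I_{4d}+\delta$ for some $\delta>0$; combining the two then gives strict positivity of $\mathfrak R$ with quantitative margin, which can be converted into Assumption \ref{main assumption} at the end.

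First I would handle the unperturbed case $\nabla^2_{xy}W\equiv 0$, where $\mathsf B=0$ and $\mathcal J_W$ is block diagonal with copies of $\mathsf A_1$. The hypothesis is that $\nabla^2_{xx}W$ and $\nabla^2_{xy}W$ share the eigenbasis $\mathsf Q_W$, so conjugating by $\mathrm{diag}(\mathsf Q_W,\mathsf Q_W)$ block-diagonalizes $\mathsf A_1$ into $d$ independent $2\times 2$ blocks
\beaa
M_i=\begin{pmatrix} 0 & -\tfrac{z_1^2}{2}\widetilde\lambda_i^W \\ -\tfrac{z_1^2}{2}\widetilde\lambda_i^W & -z_1z_2\widetilde\lambda_i^W\end{pmatrix}.
\eeaa
A direct trace--determinant computation gives eigenvalues $\mu_i^{\pm}=\tfrac{1}{2}\bigl(-z_1z_2\widetilde\lambda_i^W\pm z_1|\widetilde\lambda_i^W|\sqrt{z_1^2+z_2^2}\bigr)$. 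Imposing $\mu_i^{-}>-\lambda_U$ for each sign of $\widetilde\lambda_i^W$ separately, and rationalizing $\tfrac{2\lambda_U}{z_1(z_2+\sqrt{z_1^2+z_2^2})}=\tfrac{2\lambda_U(\sqrt{z_1^2+z_2^2}-z_2)}{z_1^3}$ together with its twin, recovers exactly the open interval stated in the lemma, and produces a strict margin $\delta=\delta(\lambda_U,z_1,z_2,\widetilde\lambda_i^W)>0$.

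Next I would reintroduce $\mathsf B\neq 0$ as a perturbation. Because $\nabla^2_{xy}W$ is simultaneously diagonalized by $\mathsf Q_W$, after the same block conjugation $\mathsf B$ reduces to $d$ independent $2\times 2$ blocks whose entries are linear in $\lambda_i^W$; hence the operator norm of $\mathsf B$, and of the full off-diagonal perturbation in $\mathcal J_W$, is bounded by $C(z_1,z_2)\max_i|\lambda_i^W|$. Choosing $\max_i|\lambda_i^W|$ small enough that this bound is strictly smaller than $\delta$, Weyl's inequality yields $\mathfrak R\succeq\tfrac{\delta}{2}\mathsf I_{4d}$ uniformly in $(x,y)$. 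Finally, since $\mathrm{diag}(aa^{\ts}+zz^{\ts},aa^{\ts}+zz^{\ts})$ has a finite largest eigenvalue $\Lambda>0$, Assumption \ref{main assumption} follows with $\lambda=\delta/(2\Lambda)$.

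The main obstacle is the perturbation step: carrying the constant $C(z_1,z_2)$ through cleanly so that the threshold on $\max_i|\lambda_i^W|$ is genuinely explicit in $\lambda_U,z_1,z_2$ and the residual margin $\delta$. An alternative that avoids the soft continuity argument would mirror Case 1 of Lemma \ref{lemma: W x - y} and apply a Schur complement to $\lambda^2_{W_{xx}}\mathsf I_{2d}-\mathsf B^2\succeq 0$ after one has absorbed a small piece $\lambda_{W_{xx}}\mathsf I$ from the $\mathcal J_U$ reservoir; this produces an explicit quadratic-in-$\lambda_i^W$ threshold. A secondary technical point is uniformity in $(x,y)$: the diagonalization data $\mathsf Q_W,\lambda_i^W,\widetilde\lambda_i^W$ must satisfy the stated bounds uniformly over $\T^d\times\T^d$, which is automatic in the translation-invariant examples of interest.
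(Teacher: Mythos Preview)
Your proposal is correct and follows the same overall architecture as the paper: split $\mathfrak R=\tfrac12(\mathcal J_W+\mathcal J_U)$ using Definition \ref{definition mean field info matrix-2}, use $\mathsf A_2\succeq\lambda_U\mathsf I_{2d}$ to get $\mathcal J_U\succeq\lambda_U\mathsf I_{4d}$, and then show $\mathcal J_W+\lambda_U\mathsf I_{4d}\succ 0$ with some uniform margin. The interval \eqref{condition on eigencalue interval} is recovered in both arguments from the diagonal piece $\mathsf A_1(x,y)+\lambda_U\mathsf I_{2d}\succ 0$.

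The execution differs in one point. You compute the eigenvalues of the $2\times 2$ blocks $M_i$ directly and then treat the off-diagonal $\mathsf B$ as a norm perturbation via Weyl's inequality, yielding a soft threshold $\max_i|\lambda_i^W|<\delta/C(z_1,z_2)$. The paper instead applies the Schur complement to the full $4d\times 4d$ matrix $\mathcal J_W+\lambda_U\mathsf I_{4d}$: the first Schur condition $\mathsf A_1+\lambda_U\mathsf I_{2d}\succeq 0$ gives exactly the same interval (via the determinant $\lambda_U(\lambda_U-z_1z_2\widetilde\lambda_i^W)-\tfrac{z_1^4}{4}(\widetilde\lambda_i^W)^2>0$, equivalent to your eigenvalue bound), and the second Schur condition $\mathsf A_1(y,x)+\lambda_U\mathsf I_{2d}-\mathsf B[\mathsf A_1(x,y)+\lambda_U\mathsf I_{2d}]^{-1}\mathsf B\succeq 0$ is expanded block by block; its leading term is shown to coincide with the first condition, with the remainder $o((\lambda_i^W)^2)$. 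Your Weyl argument is shorter and more conceptual; the paper's Schur computation is more explicit and is precisely the alternative you flag in your last paragraph, so you already identified it. Either route suffices for the ``$\lambda_i^W$ sufficiently small'' conclusion stated in the lemma.
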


\begin{proof}
According to Definition \ref{definition mean field info matrix-2}, we have 
\beaa 
\mathfrak R(z,x,y)&=&\frac{1}{2}\begin{pmatrix}
\mathsf{A}_1(x,y)&\mathsf{B}(x,y)\\
\mathsf{B}(x,y)&\mathsf{A}_1(y,x)
\end{pmatrix}+\frac{1}{2}\begin{pmatrix}
\mathsf A_2(x,y)&0\\
0&\mathsf A_2(y,x)
\end{pmatrix}\\
&=&\frac{1}{2}(\mathcal J_1+\mathcal J_2).
\eeaa 
According to condition \eqref{Gershgorin inequality} in the proof of Lemma \ref{lemma: W x - y}, we find a sufficient condition such that $\mathcal J_2$ is positive definite, i.e. there exists constant $\lambda_{U}>0$, such that 
\beaa 
\mathcal{J}_2\succeq \lambda_{U}\begin{pmatrix}
 \mathsf I_{2d}&0\\
0& \mathsf I_{2d}
\end{pmatrix}.
\eeaa 
Next, we  analyze the first term $\mathcal J_1$, such that $\mathcal J_1+\mathcal J_2$ is positive definite, which implies that $\mathcal J_1$ could be potentially non-positive definite. We shall show that
\beaa 
\begin{pmatrix}
\mathsf{A}_1(x,y)&\mathsf{B}(x,y)\\
\mathsf{B}(x,y)&\mathsf{A}_1(y,x)
\end{pmatrix}+\begin{pmatrix}
\lambda_{U} \mathsf I_{2d}&0\\
0&\lambda_{U} \mathsf I_{2d}
\end{pmatrix} \succeq 0.
\eeaa 
According to Schur complement, this is equivalent to the following condition:
\bea\label{schur condition}
\begin{cases}
    \mathsf A_1(x,y)+\lambda_{U}\mathsf I_{2d}\succeq 0; \\
    \mathsf A_1(y,x)+\lambda_{U}\mathsf I_{2d}-\mathsf B(x,y)[\mathsf A_1(x,y)+\lambda_{U}\mathsf I_{2d}]^{-1}\mathsf B(x,y)\succeq 0.
\end{cases}
\eea 
Based on Assumption \ref{assump: hesss W xy-2}, the first condition is represented below:  
\beaa 
&&\lambda_{U}\begin{pmatrix}
\mathsf I_d&0\\
0&\mathsf I_d
\end{pmatrix} +\mathsf A_1(y,x)\\
&=& \begin{pmatrix}
	\lambda_{U}\mathsf I_d &
	\mathsf Q^{-1}_W\Big[\textbf{Diag}\Big\{-\frac{z_1^2}{2}\widetilde\lambda_i^W\Big\}_{i=1}^d \Big] \mathsf Q_W  \\
	\mathsf Q^{-1}_W\Big[\textbf{Diag}\Big\{-\frac{z_1^2}{2}\widetilde\lambda_i^W\Big\}_{i=1}^d \Big] \mathsf Q_W & 	\mathsf Q^{-1}_W\Big[\textbf{Diag}\Big\{\lambda_{U} -z_1z_2\widetilde\lambda_i^W\Big\}_{i=1}^d \Big] \mathsf Q_W 
\end{pmatrix}\succeq 0.
\eeaa 
Thus, the first condition in \eqref{schur condition} is equivalent to 
\bea \label{condition 1}
\lambda_{U}(\lambda_{U}-z_1z_2\widetilde\lambda_i^W)-\frac{z_1^4}{4}(\widetilde\lambda_i^W)^2>0, \quad \text{for}\quad i=1,\cdots,d,
\eea 
which implies
\bea \label{condition on eigencalue interval-2}
\widetilde\lambda_i^W\in (\frac{-2\lambda_{U}(z_2+\sqrt{z_1^2+z_2^2})}{z_1^3}, \frac{2\lambda_{U}(\sqrt{z_1^2+z_2^2}-z_2)}{z_1^3}).
\eea 
Similarly, we have 
\beaa 
\mathsf B(x,y)&=& \begin{pmatrix}
0 &
	\mathsf Q^{-1}_W\Big[\textbf{Diag}\Big\{-\frac{z_1^2}{2}\lambda_i^W\Big\}_{i=1}^d \Big] \mathsf Q_W  \\
	\mathsf Q^{-1}_W\Big[\textbf{Diag}\Big\{-\frac{z_1^2}{2}\lambda_i^W\Big\}_{i=1}^d \Big] \mathsf Q_W & 	\mathsf Q^{-1}_W\Big[\textbf{Diag}\Big\{ -z_1z_2\lambda_i^W\Big\}_{i=1}^d \Big] \mathsf Q_W 
\end{pmatrix}\\
&=& \begin{pmatrix}
0&\mathsf B_{12}\\
\mathsf B_{21}&\mathsf B_{22}
\end{pmatrix}.
\eeaa 
Since each block of matrix $\mathsf A_1(x,y)+\lambda_{U}\mathsf I_{2d}$ is diagonal, we have 
\beaa 
&&[\mathsf A_1(x,y)+\lambda_{U}\mathsf I_{2d}]^{-1}=\begin{pmatrix}
\mathsf A_{11}^{W}&\mathsf A_{12}^{W}\\
\mathsf A_{21}^{W}&\mathsf A_{22}^{W}
\end{pmatrix},
\eeaa 
where we denote 
\beaa 
\mathsf D_i=\lambda_{U}(\lambda_{U}-z_1z_2\widetilde\lambda_i^W)-\frac{z_1^4}{4}(\widetilde\lambda_i^W)^2,\quad \text{for}\quad i=1,\cdots,d,
\eeaa 
and
\beaa
\mathsf A_{11}^W&=&\mathsf Q_W^{-1}\Big[\textbf{Diag}\Big(\Big\{\frac{1}{\mathsf D_i}[\lambda_{U}-z_1z_2\widetilde\lambda_i^W  ] \Big\}_{i=1}^d \Big) \Big] \mathsf Q_W\\
\mathsf A_{12}^W&=&\mathsf A_{21}^W=\mathsf Q_W^{-1}\Big[\textbf{Diag}\Big(\Big\{ \frac{z_1^2\widetilde\lambda_i^W}{2\mathsf D_i} \Big\}_{i=1}^d \Big) \Big] \mathsf Q_W\\
\mathsf A_{22}^W&=&\mathsf Q_W^{-1}\Big[\textbf{Diag}\Big(\Big\{\frac{\lambda_{U}}{\mathsf D_i} \Big\}_{i=1}^d \Big) \Big] \mathsf Q_W.
\eeaa
Using the above explicit representation, we obtain
\beaa 
&&\mathsf A_1(y,x)+\lambda_{U}\mathsf I_{2d}-\mathsf B(x,y)[\mathsf A(x,y)+\lambda_{U}\mathsf I_{2d}]^{-1}\mathsf B(x,y)\\
&=&A_1(y,x)+\lambda_{U}\mathsf I_{2d}\\
&&-\begin{pmatrix}
\mathsf B_{12}\mathsf A_{22}^W\mathsf B_{21}&\mathsf B_{12}\mathsf A_{21}^W\mathsf B_{12}+\mathsf B_{12}\mathsf A_{22}^W\mathsf B_{22}\\
\mathsf B_{21}\mathsf A_{12}^W\mathsf B_{21}+\mathsf B_{22}\mathsf A_{22}^W\mathsf B_{21}&\mathsf B_{21}\mathsf A^W_{11}\mathsf B_{12}+\mathsf B_{22}\mathsf A_{21}^W\mathsf B_{12}+\mathsf B_{21}\mathsf A_{12}^W\mathsf B_{22}+\mathsf B_{22}\mathsf A_{22}^W\mathsf B_{22}
\end{pmatrix} \\ 
&=&\begin{pmatrix}
\widetilde{\mathsf C}_{11}& \widetilde{\mathsf C}_{12}\\
\widetilde{\mathsf C}_{21}&\widetilde{\mathsf C}_{22}
\end{pmatrix},
\eeaa 
where we denote 
\beaa 
\widetilde{\mathsf C}_{11}&=&\mathsf Q_W^{-1}\Big[\textbf{Diag}\Big(\Big\{\lambda_{U}- \Big[ \frac{z_1^4\lambda_U(\lambda_i^W)^2}{4\mathsf D_i} \Big] \Big\}_{i=1}^d \Big) \Big] \mathsf Q_W,\\
\widetilde{\mathsf C}_{12}&=& \mathsf Q_W^{-1}\Big[\textbf{Diag}\Big(\Big\{-\frac{z_1^2}{2}\widetilde\lambda_i^W-\Big[ \frac{z_1^3z_2(\lambda_i^W)^2\lambda_U}{2\mathsf D_i} +\frac{z_1^6(\lambda_i^W)^2\widetilde\lambda_i^W}{8\mathsf D_i}\Big]  \Big\}_{i=1}^d \Big) \Big] \mathsf Q_W,\\
\widetilde{\mathsf C}_{22}&=& \mathsf Q_W^{-1}\Big[\textbf{Diag}\Big(\Big\{\lambda_{U} -z_1z_2\widetilde\lambda_i^W-\Big[\frac{z_1^4(\lambda_i^W)^2(\lambda_{U}-z_1z_2\widetilde\lambda_i^W  )}{4\mathsf D_i}\\
&&\qq\qq \qq  +\frac{z_1^5z_2(\lambda_i^W)^2\widetilde\lambda_i^W}{2\mathsf D_i}+\frac{\lambda_{U}(z_1z_2)^2(\lambda_i^W)^2}{\mathsf D_i}\Big]  \Big\}_{i=1}^d \Big) \Big] \mathsf Q_W.
\eeaa 
Applying Schur complement and under the condition $\mathsf D_i>0$ for $i=1,\cdots,d$ (i.e. \eqref{condition on eigencalue interval-2}), the second condition in \eqref{schur condition} is equivalent to, for $i=1,\cdots,d$,
\bea 
\label{condition 2}
\begin{cases}
4\lambda_{U}\mathsf D_i- z_1^4\lambda_U(\lambda_i^W)^2>0,\\
4 [4\lambda_{U}\mathsf D_i- z_1^4\lambda_U(\lambda_i^W)^2]\times \\
[4\mathsf D_i(\lambda_{U} -z_1z_2\widetilde\lambda_i^W)-[z_1^4(\lambda_i^W)^2(\lambda_{U}-z_1z_2\widetilde\lambda_i^W  ) +2z_1^5z_2(\lambda_i^W)^2\widetilde\lambda_i^W+4\lambda_{U}(z_1z_2)^2(\lambda_i^W)^2]]\\
-[4\mathsf D_iz_1^2\widetilde\lambda_i^W+ 4z_1^3z_2(\lambda_i^W)^2\lambda_U +z_1^6(\lambda_i^W)^2\widetilde\lambda_i^W ]^2>0.
\end{cases}
\eea 
Recall that $
\mathsf D_i=\lambda_{U}(\lambda_{U}-z_1z_2\widetilde\lambda_i^W)-\frac{z_1^4}{4}(\widetilde\lambda_i^W)^2,\quad \text{for}\quad i=1,\cdots,d.$ 
For the first inequality in \eqref{condition 2}, it is sufficient to require that, for $(\lambda_i^W)^2$ small enough,
\beaa
\mathsf D_i=\lambda_{U}(\lambda_{U}-z_1z_2\widetilde\lambda_i^W)-\frac{z_1^4}{4}(\widetilde\lambda_i^W)^2>0.
\eeaa
For the second inequality in \eqref{condition 2}, denote $\lambda_{U}-z_1z_2\widetilde\lambda_i^W=\alpha$. We observe the following simplification:
\bea\label{last bound for W xy} 
&&4[4\mathsf D_i\lambda_U- z_1^4(\lambda_i^W)^2\lambda_U]
\times\nonumber \\
&&\Big[4\mathsf D_i\alpha-\Big(z_1^4(\lambda_i^W)^2\alpha+2z_1^5z_2(\lambda_i^W)^2\widetilde\lambda_i^W+4\lambda_U(z_1z_2)^2(\lambda_i^W)^2\Big)\Big]\nonumber\\
&&- [4\mathsf D_iz_1^2\widetilde\lambda_i^W+ 4z_1^3z_2(\lambda_i^W)^2\lambda_U +z_1^6(\lambda_i^W)^2\widetilde\lambda_i^W ]^2>0,\nonumber\\
&\Rightarrow&4\lambda_U[4\mathsf D_i- \varepsilon_1]
\times\Big[4\mathsf D_i\alpha-\varepsilon_2\Big]- \Big[4\mathsf D_iz_1^2\widetilde\lambda_i^W+\varepsilon_3\Big]^2>0,\nonumber\\
&\Rightarrow& 64\alpha \mathsf D_i^2\lambda_{U}-16\mathsf D_i^2z_1^4(\widetilde\lambda_i^W)^2+ o(\varepsilon)\nonumber>0,\\
&\Rightarrow& 64 \mathsf D_i^2 (\lambda_{U}\alpha-\frac{1}{4}z_1^4(\widetilde\lambda_i^W)^2)+ o(\varepsilon)>0.
\eea 
Here all $\varepsilon$ terms depend on $(\lambda_i^{W})^2$. And the leading term $(\lambda_{U}\alpha-\frac{1}{4}z_1^4(\widetilde\lambda_i^W)^2)>0$ is the same as \ref{condition 1}. Thus we finish the proof as long as $\varepsilon$ is small enough.
\qed 
\end{proof}

\begin{proof}[Proof of Example \ref{ex2}]
According to the above Lemma, let $\underline\lambda=\overline{\lambda}=0.9$, $z_1=1$, and $z_2=0.3$, we obtain $\lambda_U\approx0.2$. By direct computations, condition \eqref{condition on eigencalue interval} implies 
\beaa 
-0.4(0.3+\sqrt{1.09})< \widetilde\lambda_i^W<  0.4(\sqrt{1.09}-0.3),\quad i=1,\cdots, d.
\eeaa 
In particular, if we choose $\widetilde\lambda_i^W=-0.12$, plugging into \eqref{last bound for W xy}, it is sufficient to require $\lambda_i^W<10^{-3}$, such that $o(\varepsilon)$ is small enough. \qed 
\end{proof}

\begin{proof}[Proof of Remark \ref{rem 3}]
For $\mathsf U\equiv 0$, and $W(x,y)=W(x-y)$, we have $\nabla^2_{xx}W(x-y)=-\nabla^2_{xy}W(x-y)$ and $\lambda_i^W=-\widetilde\lambda_i^W$. According to the eigenvalue decompostion, we have
 \beaa 
 \begin{pmatrix}
     \mathsf A&\mathsf B\\
     \mathsf B&\mathsf A
 \end{pmatrix}=\textbf{Diag}\Big(\mathsf Q_W^{-1},\mathsf Q_W^{-1},\mathsf Q_W^{-1},\mathsf Q_W^{-1} \Big) \begin{pmatrix}
     \widehat{\mathsf A} &\widehat{\mathsf B}\\
     \widehat{\mathsf B} &\widehat{\mathsf A}
 \end{pmatrix}\textbf{Diag}\Big(\mathsf Q_W,\mathsf Q_W,\mathsf Q_W,\mathsf Q_W \Big),
 \eeaa 
 where 
 \beaa 
 \widehat{\mathsf A}&=&\begin{pmatrix}
	z_1z_2\mathsf I_d& \frac{1}{2}[(1+z_1z_2+z_2^2)\mathsf I_d-z_1^2\textbf{Diag}\Big\{\widetilde\lambda_i^W\Big\}_{i=1}^d]\\
	\frac{1}{2}[(1+z_1z_2+z_2^2)\mathsf I_d-z_1^2\textbf{Diag}\Big\{\widetilde\lambda_i^W\Big\}_{i=1}^d]& (1+z_2^2)\mathsf I_d-z_1z_2\textbf{Diag}\Big\{\widetilde\lambda_i^W\Big\}_{i=1}^d]
	\end{pmatrix},\\
	\widehat{\mathsf B}&=&\begin{pmatrix}
	    0& \textbf{Diag}\Big\{-\frac{z_1^2}{2}\lambda_i^W\Big\}_{i=1}^d\\
	    \textbf{Diag}\Big\{-\frac{z_1^2}{2}\lambda_i^W\Big\}_{i=1}^d&  \textbf{Diag}\Big\{-z_1z_2\lambda_i^W\Big\}_{i=1}^d
	\end{pmatrix}.
 \eeaa 
For $\mathfrak R$ positive definite, it is equivalent to the following condition: 
\beaa 
\begin{pmatrix}
     \widehat{\mathsf A} &\widehat{\mathsf B}\\
     \widehat{\mathsf B} &\widehat{\mathsf A}
 \end{pmatrix}\succeq \lambda \mathsf I_{4d},
\eeaa 
for some positive constant $\lambda>0$. It is thus sufficient to prove the following inequalities, for $i=1,\cdots,d$, and $z_1>0, z_2>0$,
	\bea \label{Gershgorin inequality: x-y case}
		z_1z_2-| \frac{1}{2}[(1+z_1z_2+z_2^2)-z_1^2\widetilde\lambda_i^W]| - \frac{z_1^2}{2}|\lambda_i^W|\ge \lambda ,\nonumber \\
(1+z_2^2)-z_1z_2\widetilde{\lambda}_i^W	-|		\frac{1}{2}[(1+z_1z_2+z_2^2)-z_1^2\widetilde{\lambda}_i^W]|-\frac{z_1^2}{2}|\lambda_i^W|-z_1z_2|\lambda_i^W|\ge\lambda.
	\eea 
Assume that $\widetilde\lambda_i^W=-\lambda_i^W>0$, and $\frac{1}{2}[(1+z_1z_2+z_2^2)-z_1^2\widetilde\lambda_i^W]>0$, then the above inequality is reduced to the following inequalities: \bea\label{neg lower bound}
\frac{z_1z_2-1-z_2^2}{2}\ge \lambda,
\quad -\frac{z_1z_2-1-z_2^2}{2}-2z_1z_2\widehat\lambda_i^W\ge \lambda.
\eea 
It is obvious that such a constant $\lambda$ does not exist unless $z_1z_2-1-z_2^2=0$, $\lambda=0$, and $\widetilde{\lambda} 
_i^W=0$, for $i=1,\cdots,d$. This implies that exponential decay does not hold for $U\equiv 0$ and $W(x,y)=W(x-y)$. In this case, the $O(t^{-\infty})$ convergence derived in \cite{villani2006}[Theorem 56] seems to be optimal. Meanwhile, from \eqref{neg lower bound}, we can provide an estimate of the negative lower bound for eigenvalues of $\mathfrak{R}$. 
\end{proof}

\begin{remark}
Following the above remark, for $\mathsf U\equiv 0$, and $W(x,y)=W(x-y)$, we have $\mathsf A_1(y,x)=\mathsf A_1(x,y)=-\mathsf B(x,y)$. Similar to the condition \eqref{neg lower bound}, matrix $\mathsf A_2$ is at most semi-positive definite. Let $z_1z_2-1-z_2^2=0$, then $\mathsf A_2\succeq 0$. We also note that 
\bea \label{villani mccan carrilo condition}
&&\begin{pmatrix}
\phi_1(x,v) & \phi_2(x,v)& \phi_1(y,\tilde v)& \phi_2(y,\tilde v)
\end{pmatrix}\mathfrak R  \begin{pmatrix}
\phi_1(x,v) & \phi_2(x,v)& \phi_1(y,\tilde v)& \phi_2(y,\tilde v)
\end{pmatrix}^{\ts}\nonumber \\
&\succeq&\frac{1}{2} \begin{pmatrix}
    \phi_1(x,v)-\phi_1(y,\tilde v)&\phi_2(x,v)-\phi_2(y,\tilde v)
\end{pmatrix} \mathsf A_1(x,y)\begin{pmatrix}
    \phi_1(x,v)-\phi_1(y,\tilde v)\\
    \phi_2(x,v)-\phi_2(y,\tilde v)
\end{pmatrix}.
\eea 
The above condition \eqref{villani mccan carrilo condition} recovers the similar Hessian matrix for non-degenerate gradient flow equations in \cite{carrillo2003kinetic}. However, matrix $\mathsf A_1$ is always negative definite even if we assume that $\nabla^2 W$ is positive definite. These facts show major differences between degenerate and non-degenerate gradient flow equations. \end{remark}

\section{Proofs of Theorem \ref{main theorem} and Corollary \ref{cor 2}}\label{section4}

In this section, we present the main proof in this paper. For simplicity of presentation, we denote $f=f(t,x,v)$ as the solution of PDE \eqref{Kinetic FP}. 

We rewrite \eqref{Kinetic FP} in the following equivalent form: 
\bea\label{kinetic FP gamma}
	\pa_tf = \nabla_{x,v}\cdot  ( f \gamma )+\nabla_{x,v}\cdot (f aa^{\ts}\nabla_{x,v}\frac{\delta}{\delta f}\mathcal{E}(f)),
	\eea
where  
\bea\label{gamma}
\gamma=\mathsf J\nabla_{x,v}[\frac{v^2}{2}+\int_{\T^{d}\times\hR^d} W(x,y)f(t,y,\widetilde v)d\widetilde vdy+U(x)],
\eea 
and 
\bea\label{matrix J} 
 \mathsf J=\begin{pmatrix}
	0&-\mathsf I_d\\
	\mathsf I_d&0
\end{pmatrix}_{2d\times 2d}.
\eea
Formulation \eqref{kinetic FP gamma} is known as the flux-gradient flow \cite{li2021controlling} or Pre-Generic \cite{duong2021nonreversible}. We summarize several lemmas below.

\begin{proposition}\label{prop: decomposition of FP}
For $\gamma$ and $\mathsf J$ defined in \eqref{gamma} and \eqref{matrix J}, the PDE \eqref{Kinetic FP} is equivalent to the following form
	\bea 
\pa_t f =\nabla_{x,v}\cdot (f\gamma  )+\nabla\cdot (f aa^{\ts}\nabla_{x,v}\frac{\delta}{\delta f}\mathcal E(f)).
\eea 
Furthermore, the following identity holds:
\begin{equation}\label{ia} 
	\nabla_{x,v}\cdot (f\gamma)=\nabla_{x,v}\cdot (f\mathsf J\nabla \frac{\delta}{\delta f}\mathcal E(f))=f\la \nabla_{x,v} \frac{\delta}{\delta f}\mathcal E(f),\gamma\ra.
\end{equation}
In particular, we denote 
\bea 
\label{w rho convulution general}
W\oast\rho=\int_{\T^d\times\hR^d}W(x,y)f(t,y,\tilde v)dyd\tilde v,\quad \rho(t,y)=\int_{\hR^d}f(t,y,\tilde v)d\Tilde{v}.
\eea 
\end{proposition}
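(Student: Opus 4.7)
The plan is to reverse-engineer the flux form by splitting the terms of \eqref{Kinetic FP} into a skew-symmetric Hamiltonian transport piece and a symmetric degenerate diffusion piece produced by $aa^{\ts}$. Using the expression $\frac{\delta\mathcal{E}}{\delta f} = \log f + 1 + \tfrac{1}{2}\|v\|^2 + W\oast\rho + U(x)$ already computed in Section \ref{section2}, and the fact that $aa^{\ts}=\begin{pmatrix}0 & 0\\ 0 & \mathsf I_d\end{pmatrix}$ projects onto the velocity block, I first observe that $aa^{\ts}\nabla_{x,v}\frac{\delta\mathcal{E}}{\delta f}=(0,\,v+\nabla_v\log f)^{\ts}$. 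Hence $\nabla_{x,v}\cdot(f\,aa^{\ts}\nabla_{x,v}\frac{\delta\mathcal{E}}{\delta f})=\nabla_v\cdot(fv)+\nabla_v\cdot(\nabla_v f)$, which is precisely the right-hand side of \eqref{Kinetic FP}.

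Next, I would match the transport terms. A direct expansion, with $\gamma=\mathsf{J}\nabla_{x,v}[\tfrac12\|v\|^2+W\oast\rho+U]=(-v,\,\nabla_x(W\oast\rho)+\nabla_x U)^{\ts}$, gives $\nabla_{x,v}\cdot(f\gamma)=\nabla_x\cdot(-fv)+\nabla_v\cdot(f(\nabla_x(W\oast\rho)+\nabla_x U))$. Because $v$ is independent of $x$ and the second component is independent of $v$, this reduces to $-v\cdot\nabla_x f+(\nabla_x(W\oast\rho)+\nabla_x U)\cdot\nabla_v f$, which is exactly the negative of the two transport terms on the left of \eqref{Kinetic FP}. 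Adding the two contributions produces the claimed flux-gradient reformulation.

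Finally, I establish \eqref{ia} using two algebraic facts. First, any field of the form $\mathsf{J}\nabla H$ is divergence-free, since $\nabla_{x,v}\cdot(\mathsf{J}\nabla H)=-\partial_{x_i}\partial_{v_i}H+\partial_{v_i}\partial_{x_i}H=0$; consequently $\nabla_{x,v}\cdot(f\gamma)=\gamma\cdot\nabla_{x,v} f=f\,\gamma\cdot\nabla_{x,v}\log f$, and similarly $\nabla_{x,v}\cdot(f\mathsf{J}\nabla\frac{\delta\mathcal{E}}{\delta f})=f\,\mathsf{J}\nabla\tfrac{\delta\mathcal{E}}{\delta f}\cdot\nabla_{x,v}\log f$. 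Second, writing $\frac{\delta\mathcal{E}}{\delta f}=\log f+1+H$ with $H=\tfrac12\|v\|^2+W\oast\rho+U$, the skew-symmetry $\la u,\mathsf{J}u\ra=0$ applied to $u=\nabla H$ and to $u=\nabla\log f$ lets me freely interchange $\gamma=\mathsf{J}\nabla H$ with $\mathsf{J}\nabla\frac{\delta\mathcal{E}}{\delta f}$ and replace $\nabla\log f$ by $\nabla\frac{\delta\mathcal{E}}{\delta f}$ in the inner product, giving both equalities in \eqref{ia} at once.

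The computation presents no genuine obstacle: everything reduces to the pointwise identity $\la u,\mathsf{J}u\ra=0$ plus the projection property of $aa^{\ts}$. The one point requiring care is keeping track of which derivatives act on which variables, since $v$ has no $x$-dependence and $W\oast\rho+U$ has no $v$-dependence, so the Hamiltonian transport divergences cancel cleanly without producing spurious terms.
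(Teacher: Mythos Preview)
Your proposal is correct and follows essentially the same route as the paper: both arguments hinge on the projection property of $aa^{\ts}$ onto the $v$-block, the divergence-free identity $\nabla_{x,v}\cdot(\mathsf J\nabla\phi)=0$, and the skew-symmetry $\la u,\mathsf Ju\ra=0$ applied with $u=\nabla H$ and $u=\nabla\log f$. The only cosmetic difference is the order of presentation (you treat the $aa^{\ts}$-diffusion piece first, while the paper begins with the transport rewriting), which does not affect the substance.
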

\begin{proof}
First, we observe that 
\beaa 
&&\begin{pmatrix}
	v\\
	-(\int_{\T^{d}\times\hR^d}\nabla_x W(x,y)f(t,y,\widetilde v)d\widetilde vdy+\nabla_x U(x))
\end{pmatrix}\\
&=&-\mathsf J\nabla_{x,v}\left[\frac{v^2}{2}+ \int_{\T^{d}\times\hR^d} W(x,y)f(t,y,\widetilde v)d\widetilde vdy+U(x)\right].
\eeaa 
Furthermore, using the fact $\nabla\cdot (\mathsf J \nabla \phi)=0$ for any smooth function $\phi$, \eqref{Kinetic FP} is equivalent to the following formulation:
\beaa  
&&\pa_t f -\nabla_{x,v} \cdot \Big(f \mathsf J\nabla_{x,v} \Big[\frac{v^2}{2}+\int_{\T^{d}\times\hR^d} W(x,y)f(t,y,\widetilde v)d\widetilde vdy +U(x)\Big]\Big)\\
&=&\nabla_{x,v}\cdot \Big(faa^{\ts}\nabla_{x,v}\Big[\log f+ \frac{v^2}{2}+\int_{\T^{d}\times\hR^d} W(x,y)f(t,y,\widetilde v)d\widetilde vdy +U(x)\Big]\Big).
\eeaa  
Applying the fact that $\frac{\delta}{\delta f}\mathcal E=[\log f+1+ \frac{v^2}{2}+\int_{\T^{d}\times\hR^d} W(x,y)f(t,y,\widetilde v)d\widetilde vdy+U(x)]$, we have 
\beaa
\pa_t f -\nabla_{x,v}\cdot (f\gamma  )=\nabla\cdot (f aa^{\ts}\nabla_{x,v}\frac{\delta}{\delta f}\mathcal E(f)),
\eeaa 
where $\gamma=\mathsf J\nabla_{x,v}[\frac{v^2}{2}+\int_{\T^{d}\times\hR^d} W(x,y)f(t,y,\widetilde v)d\widetilde vdy+U(x)]$. Furthermore, we observe that
\beaa 
\nabla_{x,v}\cdot (f\gamma)
&=&f\la \nabla_{x,v}\log f,\mathsf J \nabla_{x,v}[\frac{v^2}{2}+\int_{\T^{d}\times\hR^d} W(x,y)f(t,y,\widetilde v)d\widetilde vdy+U(x)]\ra \\
&=&f \la \nabla_{x,v}\frac{\delta}{\delta f}\mathcal E(f),\gamma\ra,
\eeaa 
where we add $\la \nabla_{x,v}[\frac{v^2}{2}+U(x)+W\oast\rho(x) ],\mathsf J \nabla_{x,v}[\frac{v^2}{2}+U(x)+W\oast\rho(x)]\ra=0$ in the last step. Notice that
$\nabla\cdot (f\mathsf J \nabla \log f)=0$, we obtain 
\begin{equation*}
\nabla_{x,v}\cdot (f\gamma)=\nabla\cdot (f\mathsf J\nabla_{x,v}\frac{\delta}{\delta f}\mathcal E(f)).
\end{equation*}
\qed 
\end{proof}

\begin{lemma}\label{lemma energy decay}
\label{lemma kl decay} Under the assumption $W(x,y)=W(y,x)$, we have 
\beaa
\frac{\pa}{\pa t} \mathcal{E}(f)=-\int_{\Omega} (\nabla \frac{\delta}{\delta f}\mathcal{E}(f), aa^{\ts}\nabla \frac{\delta}{\delta f}\mathcal{E}(f)) f dxdv=-\mathcal{DE}_a(f),
\eeaa
where $\cE(f)$ is the free energy defined in \eqref{free energy}, and $\mathcal{DE}_a(f)$ is defined in \eqref{fisher}.
\end{lemma}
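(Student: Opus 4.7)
\proof[Proof proposal]
The plan is to differentiate $\mathcal{E}(f(t,\cdot,\cdot))$ in $t$, substitute the flux--gradient reformulation of \eqref{Kinetic FP} given in Proposition \ref{prop: decomposition of FP}, and then split the resulting expression into a conservative part involving $\gamma$ and a dissipative part involving $aa^{\ts}$. The conservative part should vanish because of the skew--symmetry encoded in the matrix $\mathsf J$, while the dissipative part will directly match $-\mathcal{DE}_a(f)$.

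First I would write
\[
\frac{d}{dt}\mathcal{E}(f)=\int_{\Omega}\frac{\delta\mathcal{E}}{\delta f}(x,v)\,\partial_t f(t,x,v)\,dxdv,
\]
where it is essential that $W(x,y)=W(y,x)$ so that differentiating the quadratic interaction $\frac{1}{2}\iint W(x,y)f(x,v)f(y,\tilde v)\,dxdvdyd\tilde v$ in $t$ produces the symmetric form $\int W(x,y)f(y,\tilde v)dyd\tilde v$ that appears in $\frac{\delta\mathcal{E}}{\delta f}$. I would verify this by combining the two cross terms from the product rule and using the symmetry of $W$ to collapse them into one. Using Proposition \ref{prop: decomposition of FP}, replace $\partial_t f$ by $\nabla_{x,v}\cdot(f\gamma)+\nabla_{x,v}\cdot(faa^{\ts}\nabla_{x,v}\frac{\delta\mathcal{E}}{\delta f})$ and integrate by parts in both $x$ (periodicity on $\mathbb{T}^d$) and $v$ (decay at infinity assured by the moment assumptions on $f$), yielding
\[
\frac{d}{dt}\mathcal{E}(f)=-\int_{\Omega}\!f\,\Big\langle\gamma,\nabla_{x,v}\frac{\delta\mathcal{E}}{\delta f}\Big\rangle dxdv-\int_{\Omega}\!f\,\Big\langle\nabla_{x,v}\frac{\delta\mathcal{E}}{\delta f},aa^{\ts}\nabla_{x,v}\frac{\delta\mathcal{E}}{\delta f}\Big\rangle dxdv.
\]
The second integral is by definition $\mathcal{DE}_a(f)$, so it remains to show that the first integral vanishes.

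For the first integral, recall $\gamma=\mathsf J\nabla_{x,v}\Phi$, where $\Phi(x,v)=\tfrac{1}{2}\|v\|^2+W\oast\rho(x)+U(x)$, and write $\nabla_{x,v}\frac{\delta\mathcal{E}}{\delta f}=\nabla_{x,v}\log f+\nabla_{x,v}\Phi$. Then
\[
\int_{\Omega}\!f\,\Big\langle\mathsf J\nabla_{x,v}\Phi,\nabla_{x,v}\Phi\Big\rangle dxdv=0
\]
by skew--symmetry of $\mathsf J$, while
\[
\int_{\Omega}\!f\,\Big\langle\mathsf J\nabla_{x,v}\Phi,\nabla_{x,v}\log f\Big\rangle dxdv=\int_{\Omega}\!\Big\langle\mathsf J\nabla_{x,v}\Phi,\nabla_{x,v} f\Big\rangle dxdv=-\int_{\Omega}\!f\,\nabla_{x,v}\cdot(\mathsf J\nabla_{x,v}\Phi)\,dxdv=0,
\]
since $\nabla_{x,v}\cdot(\mathsf J\nabla_{x,v}\Phi)=\mathrm{tr}(\mathsf J\nabla_{x,v}^2\Phi)=0$ from the skew--symmetry of the constant matrix $\mathsf J$ against the symmetric Hessian. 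This is precisely the content of identity \eqref{ia} in Proposition \ref{prop: decomposition of FP}, so I would just cite that identity.

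The main technical nuisance, rather than a deep obstacle, will be justifying the boundary-at-infinity vanishing in the $v$ integration by parts when the drift $\gamma$ grows linearly in $v$; here one uses the standing assumption that all velocity moments of $f(t,\cdot,\cdot)$ remain finite together with the smoothness/decay of $f$ assumed in the setup. The only point requiring genuine care is the bookkeeping of the $W\oast\rho$ contribution: it must appear with the correct factor in $\frac{\delta\mathcal{E}}{\delta f}$ (via symmetry of $W$) so that its contribution to the conservative term becomes part of the gradient $\nabla_{x,v}\Phi$ against which $\mathsf J$ is applied, and thereby drops out by the skew-symmetry argument above.
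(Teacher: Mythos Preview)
Your proposal is correct and follows essentially the same approach as the paper: differentiate $\mathcal{E}(f)$, substitute the flux--gradient form of the PDE from Proposition \ref{prop: decomposition of FP}, integrate by parts, and kill the conservative term by the skew--symmetry of $\mathsf J$. The only cosmetic difference is that the paper first rewrites $\partial_t f=\nabla_{x,v}\cdot\big(f(aa^{\ts}+\mathsf J)\nabla_{x,v}\frac{\delta}{\delta f}\mathcal{E}\big)$ using identity \eqref{ia}, so the vanishing of the $\mathsf J$--part becomes the single line $\big(\nabla\frac{\delta}{\delta f}\mathcal{E},\mathsf J\nabla\frac{\delta}{\delta f}\mathcal{E}\big)=0$, whereas you keep $\gamma=\mathsf J\nabla\Phi$ and split $\nabla\frac{\delta\mathcal{E}}{\delta f}=\nabla\log f+\nabla\Phi$ before invoking skew--symmetry; your version has the merit of making explicit where the symmetry $W(x,y)=W(y,x)$ enters.
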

\begin{proof}
Clearly, PDE \eqref{Kinetic FP} in formulation \eqref{kinetic FP gamma} can be written below. From equality \eqref{ia}, we have 
\begin{equation*}
    \pa_t f =\nabla_{x,v}\cdot (f (aa^{\ts}+\mathsf J)\nabla_{x,v}\frac{\delta}{\delta f}\mathcal E(f)).
\end{equation*}
Thus 
\begin{equation*}
\begin{split}
\frac{d}{dt}\mathcal{E}(f)=&\int_\Omega \frac{\delta}{\delta f}\mathcal{E}(f)\cdot \partial_t f dxdv    \\
=&\int_\Omega \frac{\delta}{\delta f}\mathcal{E}(f)\cdot\nabla_{x,v}\cdot(f(aa^{\ts}+\mathsf J)\nabla_{x,v}\frac{\delta}{\delta f}\mathcal{E}) dxdv \\
=&-\int_\Omega (\nabla_{x,v}\frac{\delta}{\delta f}\mathcal{E}, (aa^{\ts}+\mathsf J)\nabla_{x,v}\frac{\delta}{\delta f}\mathcal{E}(f))fdxdv\\
=&-\int_\Omega (\nabla_{x,v}\frac{\delta}{\delta f}\mathcal{E}, aa^{\ts}\nabla_{x,v}\frac{\delta}{\delta f}\mathcal{E}(f))fdxdv,
\end{split}
\end{equation*}
where we use the fact that 
\begin{equation*}
   (\nabla_{x,v}\frac{\delta}{\delta f}\mathcal{E},\mathsf J\nabla_{x,v}\frac{\delta}{\delta f}\mathcal{E}(f))=0. 
\end{equation*} \qed 
\end{proof}

\begin{lemma}[Technical Lemma]\label{lemma: main lemma-R}
Suppose Assumption \ref{main assumption} holds. Then
\beaa 
\pa_t\mathcal{DE}_{a,z}(f)\le -2\int_{\Omega\times\Omega} \mathfrak R(\delta\mathcal E,\delta \mathcal E) f(t,x,v)f(t,y,\tilde v) dxdvdyd\tilde v\le -2\lambda [ \mathcal {DE}_a(f)+ \mathcal {DE}_z(f)].
\eeaa 
\end{lemma}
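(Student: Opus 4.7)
The strategy is to differentiate $\mathcal{DE}_{a,z}(f)$ along the flow \eqref{kinetic FP gamma}, push every derivative onto $\phi:=\frac{\delta}{\delta f}\mathcal E(f)$ via integration by parts, and then recognise the resulting pointwise quadratic form as the mean--field matrix $\mathfrak R$. Write $K:=aa^{\ts}+\mathsf J$ and $M:=aa^{\ts}+zz^{\ts}$, so that Proposition \ref{prop: decomposition of FP} gives $\partial_t f=\nabla_{x,v}\!\cdot(fK\nabla\phi)$ and
\begin{equation*}
\mathcal{DE}_{a,z}(f)=\int_{\Omega}\langle\nabla\phi,M\nabla\phi\rangle f\,dxdv.
\end{equation*}
Differentiating in $t$ produces
\begin{equation*}
\partial_t\mathcal{DE}_{a,z}(f)=\int_{\Omega}\langle\nabla\phi,M\nabla\phi\rangle\partial_tf\,dxdv+2\int_{\Omega}\langle\nabla\phi,M\nabla\partial_t\phi\rangle f\,dxdv,
\end{equation*}
where $\partial_t\phi=\partial_tf/f+\int W(x,y)\partial_t\rho(t,y)\,dy$ (with $\rho$ defined in \eqref{w rho convulution general}). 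This naturally splits the right-hand side into a \emph{local} piece (first term plus the $\partial_tf/f$ contribution of the second term) and a \emph{mean-field} piece (the convolution contribution).

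\textbf{Step 1: local Bochner identity.} I would first treat the local piece exactly as in the usual information Gamma calculus for kinetic Fokker--Planck equations: integrate by parts twice in $(x,v)$, using $\nabla\!\cdot(f\mathsf J\nabla\psi)=f\langle\nabla\psi,\mathsf J\nabla\psi\rangle\equiv0$ from \eqref{ia}, and collect the commutators of $\nabla_{x,v}$ with the Hamiltonian drift $\mathsf J\nabla(\tfrac12\|v\|^2+U+W\oast\rho)$. The outcome is an expression of the form
\begin{equation*}
-\,2\!\int_{\Omega}\!\bigl\|\mathrm{Hess}_{M,K}\phi\bigr\|^2 f\,dxdv
-\,2\!\int_{\Omega}\bigl\langle\nabla\phi,\widetilde{\mathsf A}(x)\nabla\phi\bigr\rangle f\,dxdv,
\end{equation*}
in which $\widetilde{\mathsf A}$ assembles three ingredients: the constant Hessian of the kinetic potential $\tfrac12\|v\|^2$, the Hessian $\nabla_{xx}^2(U+W\oast\rho)$ born from commuting $\nabla_x$ with the drift, and the cross terms forced by the skew-symmetric $\mathsf J$ interacting with the auxiliary block $zz^{\ts}$. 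Matching coefficients carefully (this is where the specific entries $z_1z_2$, $\tfrac12(1+z_1z_2+z_2^2)$, $(1+z_2^2)$ in $\mathsf A$ are pinned down) identifies $\widetilde{\mathsf A}$ with the diagonal block $\mathsf A$ of Definition \ref{definition mean field info matrix} evaluated with $\nabla^2_{xx}V$.

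\textbf{Step 2: mean-field correction.} For the remaining piece I substitute the continuity equation $\partial_t\rho(t,y)=\int\partial_tf(t,y,\tilde v)\,d\tilde v$ into $\int W(x,y)\partial_t\rho(t,y)dy$, then integrate by parts in $(y,\tilde v)$ against $f(t,y,\tilde v)$. The derivatives falling on $W$ produce exactly the kernel $\nabla^2_{xy}W(x,y)$ with coefficients $-\tfrac{z_1^2}{2}$ and $-z_1z_2$, i.e.\ the off-diagonal block $\mathsf B(x,y)$. Using symmetry $W(x,y)=W(y,x)$ to symmetrise in $(x,v)\leftrightarrow(y,\tilde v)$, and interpreting the local integrand from Step 1 via $\int f(t,y,\tilde v)dyd\tilde v=1$, the whole derivative consolidates into a single double integral
\begin{equation*}
\partial_t\mathcal{DE}_{a,z}(f)\le-2\!\int_{\Omega\times\Omega}\bigl\langle\Xi,\mathfrak R(z,x,y)\Xi\bigr\rangle f(t,x,v)f(t,y,\tilde v)\,dxdvdyd\tilde v,
\end{equation*}
where $\Xi=\bigl(\nabla_{x,v}\phi(x,v),\,\nabla_{y,\tilde v}\phi(y,\tilde v)\bigr)^{\!\ts}\in\mathbb R^{4d}$. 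The inequality $\le$ arises from discarding the nonnegative Bochner square $-2\int\|\mathrm{Hess}_{M,K}\phi\|^2 f$.

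\textbf{Step 3: applying the matrix assumption.} Finally, Assumption \ref{main assumption} gives the pointwise bound $\mathfrak R(z,x,y)\succeq\lambda\,\mathrm{diag}(aa^{\ts}+zz^{\ts},aa^{\ts}+zz^{\ts})$. Substituting this into the double integral and using $\int f(t,y,\tilde v)\,dyd\tilde v=1$ to collapse one half, and the symmetric argument for the other half, yields
\begin{equation*}
\partial_t\mathcal{DE}_{a,z}(f)\le-2\lambda\!\int_{\Omega}\bigl\langle\nabla\phi,(aa^{\ts}+zz^{\ts})\nabla\phi\bigr\rangle f\,dxdv=-2\lambda[\mathcal{DE}_a(f)+\mathcal{DE}_z(f)],
\end{equation*}
which is the advertised bound.

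\textbf{Main obstacle.} The delicate part is Step 1: accurately tracking every commutator produced when $\partial_t\phi$ and the second-order derivatives of $\phi$ cross the drift operator $\nabla\!\cdot(fK\nabla\,\cdot\,)$, and then recognising the resulting symmetric $2d\times2d$ quadratic form as precisely $\mathsf A(x,y)$. The role of $z=(z_1\mathsf I_d,z_2\mathsf I_d)^{\ts}$ is to compensate the hypoelliptic degeneracy of $aa^{\ts}$: the skew term $\mathsf J$ interacting with $zz^{\ts}$ contributes the off-diagonal $\tfrac12(1+z_1z_2+z_2^2)\mathsf I_d$ entries, and the transport $v\!\cdot\!\nabla_x$ contributes $z_1z_2\mathsf I_d$. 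Any bookkeeping error in these coefficients would break the matching of $\widetilde{\mathsf A}$ with $\mathsf A$, so most of the work is in this commutator identity; the mean-field symmetrisation and the final substitution of the matrix inequality are comparatively mechanical.
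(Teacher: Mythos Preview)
Your proposal is correct and follows essentially the same route as the paper: differentiate $\mathcal{DE}_{a,z}$ along the flow, split via $\partial_t\phi=\partial_tf/f+\int W\partial_t\rho$ into a local Bochner piece (yielding a nonnegative Hessian square plus the $\mathsf A$-block) and a mean-field piece (integrating by parts in $(y,\tilde v)$ to produce the $\mathsf B$-block), symmetrise in $(x,v)\leftrightarrow(y,\tilde v)$ using $W(x,y)=W(y,x)$, and apply Assumption~\ref{main assumption}. The paper's only organisational difference is that it packages Step~1 through the Gamma-calculus operators $\widetilde\Gamma_2$, $\widetilde\Gamma_2^z$, $\Gamma_{\mathcal I_{a,z}}$ (Proposition~\ref{bochner formula} and Lemma~\ref{lemma gamma I}) and treats $\mathcal{DE}_a$, $\mathcal{DE}_z$ in separate lemmas before recombining; the commutator bookkeeping you flag as the main obstacle is exactly the matrix computation the paper carries out explicitly at the end of its proof.
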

We leave the proof of Lemma \ref{lemma: main lemma-R} in section \ref{section5}. 


\noindent \textbf{Proof of Theorem~\ref{main theorem}} 
According to Lemma \ref{lemma: main lemma-R}, we have
\beaa 
\pa_t \mathcal{DE}_{a,z}(f)
\le -2\lambda \mathcal{DE}_{a,z}(f).
\eeaa
Furthermore, using the fact that 
$$
-\frac{d}{dt}\mathcal{E}(f)=\mathcal{DE}_a(f)\le \mathcal{DE}_{a,z}(f),$$ 
we have 
\beaa
-[\mathcal{DE}_{a,z}(f)-\mathcal{DE}_{a,z}(f_{\infty})]&=&\int_t^{\infty} \frac{d}{ds}\mathcal{DE}_{a,z}(f(s,\cdot,\cdot))ds\\
\text{Step A:}&\le & -2\lambda \int_t^{\infty} \mathcal{DE}_{a,z}(f(s,\cdot,\cdot))ds\\
&\le & -2\lambda \int_t^{\infty} \mathcal{DE}_{a}(f(s,\cdot,\cdot))ds\\
&=&-2\lambda \int_t^{\infty}-\frac{d}{ds}\mathcal{E}(f(s,\cdot,\cdot))ds\\
\text{Step B:}&=&-2\lambda [\mathcal E(f)- \mathcal E(f_{\infty}) ].
\eeaa
{
From Step B, we have 
\begin{equation}\label{AB}
[\mathcal E(f)- \mathcal E(f_{\infty}) ]\le \frac{1}{2\lambda}[\mathcal{DE}_{a,z}(f)-\mathcal{DE}_{a,z}(f_{\infty})]. 
\end{equation}
From Step A, we have 
\beaa
-[\mathcal{DE}_{a,z}(f)-\mathcal{DE}_{a,z}(f_{\infty})]\le -2\lambda \int_t^{\infty} \mathcal{DE}_{a,z}(f(s,\cdot,\cdot))-\mathcal{DE}_{a,z}(f_{\infty})ds.
\eeaa 
Applying Gronwall's inequality, we have 
\beaa 
\mathcal{DE}_{a,z}(f)-\mathcal{DE}_{a,z}(f_{\infty})&\le&e^{-2\lambda t}[\mathcal{DE}_{a,z}(f_0)-\mathcal{DE}_{a,z}(f_{\infty})].
\eeaa 
Using the inequality \eqref{AB}, we have 
\begin{equation*}
 \mathcal E(f)- \mathcal E(f_{\infty}) \leq \frac{1}{2\lambda} \Big[\mathcal{DE}_{a,z}(f)-\mathcal{DE}_{a,z}(f_{\infty})\Big]\le \frac{1}{2\lambda}e^{-2\lambda t}\Big[\mathcal{DE}_{a,z}(f_0)-\mathcal{DE}_{a,z}(f_{\infty})\Big],
\end{equation*}
which finishes the proof.}
\qed 


\noindent \textbf{Proof of Corollary \ref{cor 2}} Recall that 
	\beaa
	\cE(f)&=&\int_\Omega f\log f dxdv+\frac{1}{2}\int_\Omega \|v\|^2 f dxdv\\
	&&+\frac{1}{2}\int_{\Omega\times\Omega} W(x,y)f(x,v)f(y,\tilde v)dx dv dy d\tilde v+\int_{\Omega} U(x)f(x,v)dxdv.
	\eeaa
Similarly, we denote $\cE(f_{\infty})$ as the free energy associated with the equilibrium $f_{\infty}$. We know that $f_{\infty}$ satisfies the equation, 
\[
f_{\infty}(x,v)=\frac{1}{Z}e^{-\frac{1}{2}\|v\|^2-\int_\Omega W(x,y)f_{\infty}(y,v)dydv-U(x)},
\]
which implies that 
\[
\log f_{\infty}=-\frac{1}{2}\|v\|^2-\int_\Omega W(x,y)f_{\infty}(y,v)dydv-U(x)-\log Z,
\]
where $Z$ is the normalization constant. We thus obtain 
\beaa
\cE(f_{\infty})=-\frac{1}{2}\int_{\Omega\times \Omega} W(x,y) f_{\infty}(x,v)f_{\infty}(y,\tilde v)dxdvdyd\tilde v-\log Z.
\eeaa
Following the above representation of $\cE(f_{\infty})$, and denoting $f=f(t,\cdot,\cdot)$, we derive 
\beaa
&&\cE(f)-\cE(f_{\infty})\\
&=&\int_\Omega f(t,x,v)\log f(t,x,v) dxdv+\frac{1}{2}\int_\Omega \|v\|^2 f(t,x,v) dxdv\\
&&+\frac{1}{2}\int_{\Omega\times\Omega} W(x,y)f(t,x,v)f(t,y,\tilde v)dx dv dy d\tilde v+\int_{\Omega}U(x)f(t,x,v)dxdv\\
&&+\frac{1}{2}\int_{\Omega\times \Omega} W(x,y) f_{\infty}(x,v)f_{\infty}(y,\tilde v)dxdvdyd\tilde v+\log Z
\eeaa 
\beaa 
&=& \int_\Omega f\log \frac{f}{f_{\infty}}dxdv+\int_\Omega f \log f_{\infty} dxdv+\frac{1}{2}\int_\Omega \|v\|^2 f dxdv+\int_{\Omega}U(x)fdxdv+\log Z\\
&&+\frac{1}{2}\int_{\Omega\times\Omega} W(x,y)f(t,x,v)f(t,y,\tilde v)dx dv dy d\tilde v+\frac{1}{2}\int_{\Omega\times\Omega} W(x,y) f_{\infty}(x,v)f_{\infty}(y,\tilde v)dxdvdyd\tilde v\\
&=& \int_{\Omega} f\log \frac{f}{f_{\infty}}dxdv-\int_{\Omega\times\Omega} f(t,x,\tilde v) W(x,y)f_{\infty}(y,v)dydvdxd\tilde v\\
&&+\frac{1}{2}\int_{\Omega\times\Omega} W(x,y)f(t,x,v)f(t,y,\tilde v)dx dv dy d\tilde v+\frac{1}{2}\int_{\Omega\times\Omega} W(x,y) f_{\infty}(x,v)f_{\infty}(y,\tilde v)dxdvdyd\tilde v\\
&=& \int_\Omega f\log \frac{f}{f_{\infty}}dxdv +\frac{1}{2}\int_{\Omega\times\Omega} W(x,y)(f(t,x,v)-f_{\infty}(x,v))(f(t,y,\tilde v)-f_{\infty}(y,\tilde v))dxdydvd\tilde v,\\
&\ge& \int_\Omega f\log \frac{f}{f_{\infty}}dxdv -\frac{1}{2}\int_{\Omega\times\Omega} |W(x,y)|\cdot|(f(t,x,v)-f_{\infty}(x,v))|\cdot|(f(t,y,\tilde v)-f_{\infty}(y,\tilde v))|dxdydvd\tilde v\\
&\ge &  \int f\log \frac{f}{f_{\infty}}dxdv-\frac{1}{2}C_{W}\|f-f_{\infty}\|_{L^1}^2,
\eeaa 
where the last inequality follows from our assumption in \eqref{assump bound on W}. Applying the Csisz\'ar-Kullback-Pinsker inequality, we have
\beaa
\cE(f)-\cE(f_{\infty})&\ge& \int_\Omega f\log\frac{f}{f_{\infty}}dxdv-\frac{1}{2}C_{W}\|f-f_{\infty}\|_{L^1}^2\\
&\ge&\frac{1}{2} \|f-f_{\infty}\|^2_{L^1}-\frac{1}{2}C_{W}\|f-f_{\infty}\|_{L^1}^2.
\eeaa
The proof is thus completed assuming that $C_{W}<1$. \qed

\section{Proofs of technical lemmas}\label{section5}
In this section, we provide all proofs of technical lemmas in the previous section. For simplicity of notations, we use the integration notation that $\int=\int_\Omega$. Besides, we denote $\Omega=\Omega_x\times\Omega_v$, $\Omega_x=\mathbb{T}^d$, $\Omega_v=\mathbb{R}^d$, and use the notation $\rho$ to represent the marginal density function of $f$ on the spatial domain, i.e., 
\bea
\rho(t,x)=\int_{\Omega_v}f(t,x,v)dv.
\eea

\subsection{Notations and Information Gamma operators}
In this subsection, we prepare some notations for later on computations in technical lemmas. 

Denote $\phi\in C^\infty(\Omega)$ as a smooth testing function. 
Denote the Kolmogorov operator $L:=L(f)$ for PDE \eqref{Kinetic FP} as 
\begin{equation*}
L\phi= v\nabla_x \phi-\nabla_v\phi\cdot \nabla_x (\int_{\Omega} W(x,y)f(t,y,\widetilde v)d\widetilde vdy+U(x))+\Delta_v \phi-v\cdot\nabla_v \phi.
\end{equation*} 
We also denote the $L^2$ adjoint operator with respect to the Lebesgue measure of $L$ as $L^*$.  In other words, \eqref{Kinetic FP} can be written as 
\begin{equation*}
\partial_tf=L^*f.     
\end{equation*}
Following Proposition \ref{prop: decomposition of FP}, we decompose the operator $L$ as 
\begin{equation*}
\begin{aligned}
L\phi=\widetilde L\phi -\la \gamma,\nabla \phi\ra,
\end{aligned}
\end{equation*}
where 
  \beaa 
\widetilde L \phi&=& \nabla\cdot(aa^{\ts}\nabla \phi)+\la aa^{\ts}
\nabla(-\frac{v^2}{2}-U(x)-W\oast\rho(x)), \nabla \phi\ra.
\eeaa
Denote a $z$-direction generator:
\begin{equation*}
\widetilde L_z \phi= \nabla\cdot(zz^{\ts}\nabla \phi)+\la zz^{\ts}
\nabla(-\frac{v^2}{2}-U(x)-W\oast\rho(x)), \nabla \phi\ra.    
\end{equation*}
We first define Gamma one bilinear forms $\Gamma_{1},\Gamma_{1}^z\colon C^{\infty}(\Omega)\times C^{\infty}(\Omega)\rightarrow C^{\infty}(\Omega) $ by 
\bea
\Gamma_{1}(\phi,\phi)=\la a^{\ts}\nabla \phi, a^{\ts}\nabla \phi\ra_{\hR^d},\quad \Gamma_{1}^z(\phi,\phi)=\la z^{\ts}\nabla \phi, z^{\ts}\nabla \phi\ra_{\hR^d}.
\eea
Next, we recall the following \emph{information gamma calculus} introduced in \cite{FengLi2021}, where $a$ and $z$ are chosen as constant matrices. 
\label{defn:tilde gamma 2 znew}
Define the following three bi-linear forms:
\begin{equation*}
 \widetilde\Gamma_{2}, \widetilde\Gamma_{2}^{z}, \Gamma_{\mathcal{I}_{a,z}}\colon C^{\infty}(\Omega)\times C^{\infty}(\Omega)\rightarrow C^{\infty}(\Omega).
\end{equation*}
We denote 
\beaa 
\widetilde\Gamma_{2}(\phi, \phi)=\frac{1}{2}\widetilde L\Gamma_{1}(\phi,\phi)-\Gamma_{1}(\widetilde L\phi, \phi),
\eeaa 
and 
\beaa
\widetilde\Gamma_2^{z}(\phi,\phi)=\frac{1}{2}\widetilde L\Gamma_1^{z}(\phi,\phi)-\Gamma_{1}^z(\widetilde L\phi,\phi).
  \eeaa 
 We denote the irreversible Gamma operator: 
\bea\label{irr gamma operator}
\Gamma_{\mathcal{I}_{a,z}}(\phi,\phi)&=& (\widetilde L\phi+\widetilde L_z\phi) \la \nabla \phi,\gamma\ra -\frac{1}{2}\la \nabla \big(\Gamma_1(\phi,\phi)+\Gamma_1^z(\phi,\phi)\big),\gamma\ra.
\eea
We first present the following proposition for $a$ and $z$ defined in \eqref{matrix a z}, which is a special case of \cite{FengLi2021}[Proposition 9]. See other (generalized) Bakry-Emery formulation in \cite{baudoin2016curvature} for degenerate operator $L$ and  \cite{bakry1985, arnold2000generalized} for non-degenerate operator $L$. 
\begin{proposition}\label{bochner formula}
For any $\phi(x,v)\in C^{\infty}(\Omega)$, we have
\bea 
\widetilde\Gamma_2(\phi,\phi)+\widetilde\Gamma_2^z(\phi,\phi)&=&\sum_{i=1}^d\Big[z_1^2|\pa^2_{x_ix_i}\phi|^2+(1+z_2^2)|\pa^2_{x_{i+d}x_{i+d}}\phi|^2+2z_1z_2|\pa^2_{x_ix_{i+d}}\phi|^2\Big]\nonumber\\
&&-\sum_{i=1}^d\sum_{\hat k=1}^{2d}a_i^{\ts}\nabla [aa^{\ts}\nabla(-\frac{v^2}{2}-U(x)-W\oast\rho(x))]_{\hat k} \partial_{x_{\hat k}}\phi a^{\ts}_i\nabla \phi \nonumber\\
&&-\sum_{k=1}^d\sum_{\hat k=1}^{2d}z_k^{\ts}\nabla [aa^{\ts}\nabla(-\frac{v^2}{2}-U(x)-W\oast\rho(x))]_{\hat k} \partial_{x_{\hat k}}\phi z^{\ts}_k\nabla \phi,\nonumber
\eea 
where we denote $(x_1,\cdots x_d,v_1,\cdots,v_d)=(x_1,\cdots,x_{2d})$, and $\nabla \phi=(\pa_{x_1}f,\cdots, \pa_{x_{2d}}\phi)$. Furthermore, we denote $a^{\ts}=(a^{\ts}_1,\cdots,a^{\ts}_d)^{\ts}\in\hR^{d\times 2d}$ and $z^{\ts}=(z^{\ts}_1,\cdots,z^{\ts}_d)^{\ts}\in\hR^{d\times 2d}$.
\end{proposition}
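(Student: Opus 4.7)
My plan is to compute $\widetilde\Gamma_2(\phi,\phi)$ and $\widetilde\Gamma_2^z(\phi,\phi)$ separately from their definitions
\[
\widetilde\Gamma_2(\phi,\phi) = \tfrac{1}{2}\widetilde L \Gamma_1(\phi,\phi) - \Gamma_1(\widetilde L\phi,\phi),\qquad \widetilde\Gamma_2^z(\phi,\phi) = \tfrac{1}{2}\widetilde L \Gamma_1^z(\phi,\phi) - \Gamma_1^z(\widetilde L\phi,\phi),
\]
and then sum. Since $a$ and $z$ are constant matrices, $\Gamma_1(\phi,\phi)=|a^{\ts}\nabla\phi|^2$ and $\Gamma_1^z(\phi,\phi)=|z^{\ts}\nabla\phi|^2$ are plain squared norms, so the calculation reduces to a (slightly generalized) Bakry--\'Emery commutator identity for the second-order operator $\widetilde L$ with ``potential'' $H(x,v) = -\tfrac{1}{2}|v|^2 - U(x) - W\oast\rho(x)$.

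The first step is to decompose $\widetilde L = \nabla\cdot(aa^{\ts}\nabla\,\cdot\,) + \la aa^{\ts}\nabla H,\nabla\,\cdot\,\ra$ and apply the second-order product rule. The symmetric diffusion part $\nabla\cdot(aa^{\ts}\nabla)$ acting on the squared norms $|a^{\ts}\nabla\phi|^2$ and $|z^{\ts}\nabla\phi|^2$ produces terms of the form $2\sum_j|a^{\ts}\nabla(a^{\ts}_j\!\cdot\!\nabla\phi)|^2$ and $2\sum_j|a^{\ts}\nabla(z^{\ts}_j\!\cdot\!\nabla\phi)|^2$, plus first-order commutators. Subtracting the pairings $\la a^{\ts}\nabla\widetilde L\phi, a^{\ts}\nabla\phi\ra$ and $\la z^{\ts}\nabla\widetilde L\phi, z^{\ts}\nabla\phi\ra$ cancels the drift contribution up to the commutator $[X,\widetilde L]\phi$ for $X\in\{a^{\ts}\nabla,z^{\ts}\nabla\}$. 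Using $a^{\ts}_i\!\cdot\!\nabla = \pa_{x_{i+d}}$ and $z^{\ts}_i\!\cdot\!\nabla = z_1\pa_{x_i}+z_2\pa_{x_{i+d}}$, the Hessian-squared contributions combine to give
\[
\sum_{i=1}^d\Bigl(z_1^2|\pa^2_{x_ix_i}\phi|^2 + (1+z_2^2)|\pa^2_{x_{i+d}x_{i+d}}\phi|^2 + 2z_1z_2|\pa^2_{x_ix_{i+d}}\phi|^2\Bigr),
\]
which is exactly the first line of the claimed identity.

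For the first-order remainder I would next expand the commutator $[a^{\ts}_i\nabla,\, aa^{\ts}\nabla H\cdot\nabla]\phi$, which produces $\sum_{\hat k} a^{\ts}_i\nabla [aa^{\ts}\nabla H]_{\hat k}\,\pa_{x_{\hat k}}\phi$; pairing with $a^{\ts}_i\nabla\phi$ yields the first remainder sum of the stated identity, with the sign carried by $H$. Repeating the same computation with $a^{\ts}_i\nabla$ replaced by $z^{\ts}_k\nabla$ delivers the second sum. Since $aa^{\ts}$ is supported on the $v$-block, $\widetilde L$ has no $\pa_x$ derivatives, so $[a^{\ts}\nabla,\widetilde L]$ only picks up the $-v\cdot\nabla_v$ drift; the commutator $[z^{\ts}\nabla,\widetilde L]$ however mixes $x$ and $v$ derivatives and is what forces the off-diagonal factor $2z_1z_2$ in the cross-Hessian coefficient above.

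The main obstacle is purely combinatorial bookkeeping: consistently tracking the cross terms produced by the off-diagonal block of $zz^{\ts}$ and by the commutators with $-v\cdot\nabla_v$ and $-\nabla_x(U+W\oast\rho)\cdot\nabla_v$, while taking care that $\nabla H$ depends on $\rho$ only through the $x$-variable. Because the claim is cited as the specialization of \cite{FengLi2021}[Proposition 9] to the constant $(a,z)$ in \eqref{matrix a z}, once definitions are matched the identity follows by substitution; no analytic ingredient beyond the classical Bochner identity is required.
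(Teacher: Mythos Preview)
The paper does not give its own proof of this proposition; immediately before the statement it notes that this is ``a special case of \cite{FengLi2021}[Proposition 9]'' and simply cites that result together with the classical Bakry--\'Emery references. Your plan to verify the specialization by a direct Bochner-type computation from the definitions of $\widetilde\Gamma_2$ and $\widetilde\Gamma_2^z$ is exactly how such identities are proved, and your outline (second-order part from the diffusion, first-order remainder from the commutator with the drift $aa^{\ts}\nabla H$) is correct; you yourself observe at the end that the result follows by substitution from the cited proposition.

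One small caution on the bookkeeping: the Hessian contribution your procedure actually produces is
\[
z_1^2\sum_{i,j}|\partial^2_{x_i x_{j+d}}\phi|^2+(1+z_2^2)\sum_{i,j}|\partial^2_{x_{i+d}x_{j+d}}\phi|^2+2z_1z_2\sum_{i,j}\partial^2_{x_i x_{j+d}}\phi\,\partial^2_{x_{i+d}x_{j+d}}\phi,
\]
i.e.\ a full weighted Frobenius-type expression in the $(x,v)$ and $(v,v)$ Hessian blocks, rather than the purely diagonal sum displayed in the statement. This does not affect the rest of the paper, since the term is only used as a nonnegative quantity that is discarded in the proof of Lemma~\ref{lemma: main lemma-R}; but when you carry out the computation, do not try to force it into the single-index form written above.
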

We next prove the following equivalent formulation for the irreversible Gamma operator. For simplicity of presentation, we use the following notation in the rest of the paper:
\bea
\delta\mathcal E=\frac{\delta}{\delta f}\mathcal E(f).
\eea 
\begin{lemma}\label{lemma gamma I}
For irreversible Gamma operator $\Gamma_{\mathcal I_{a,z}}=\Gamma_{\mathcal I_{a}}+\Gamma_{\mathcal I_{z}}$ defined in \eqref{irr gamma operator}, we have
	\beaa
	\int \Gamma_{\mathcal I_a}(\delta\mathcal E,\delta\mathcal E)fdxdv
&=&
-\int \la aa^{\ts}\nabla \delta\mathcal E, \nabla \gamma \nabla \delta\mathcal E\ra f dxdv,\\ 
\int \Gamma_{\mathcal I_z}(\delta\mathcal E,\delta\mathcal E)f dxdv
&=&
-\int \la zz^{\ts}\nabla \delta\mathcal E, \nabla \gamma \nabla \delta\mathcal E\ra f dxdv.
\eeaa
\end{lemma}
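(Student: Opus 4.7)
The plan is to prove the first identity and observe that the second follows line-by-line by replacing $aa^\ts$ with $zz^\ts$ and $\widetilde L$ with $\widetilde L_z$, since only the symmetry of the quadratic form and the structural properties of $\gamma$ enter. Denote $\phi=\delta\mathcal E$ and $\psi=\frac{1}{2}\|v\|^2+U(x)+W\oast\rho(x)$, so that $\phi=\log f+1+\psi$ and $\gamma=\mathsf J\nabla\psi$. Two standing facts will be exploited throughout: the divergence-freeness $\nabla\cdot\gamma=0$ and the Hamilton-type orthogonality $\gamma\cdot\nabla\psi=0$, both consequences of the antisymmetry of $\mathsf J$. Together they produce the identity $\nabla\cdot(f\gamma)=f\,\gamma\cdot\nabla\phi$ already established in Proposition \ref{prop: decomposition of FP}, which will be the engine of the computation.

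I will split
\[
\int\Gamma_{\mathcal I_a}(\phi,\phi)f\,dxdv=T_1-T_2,
\]
with $T_1=\int(\widetilde L\phi)(\gamma\cdot\nabla\phi)f\,dxdv$ and $T_2=\frac{1}{2}\int(\gamma\cdot\nabla\Gamma_1(\phi,\phi))f\,dxdv$, and attack $T_1$ first. After expanding $\widetilde L\phi=\nabla\cdot(aa^\ts\nabla\phi)-\la aa^\ts\nabla\psi,\nabla\phi\ra$, integrating the divergence term by parts against the weight $(\gamma\cdot\nabla\phi)f$, and substituting $\nabla f=f(\nabla\phi-\nabla\psi)$, the two terms carrying $\nabla\psi$ cancel by the symmetry of $aa^\ts$, leaving
\[
T_1=-\int f\,(\gamma\cdot\nabla\phi)\,\Gamma_1(\phi,\phi)\,dxdv-\int f\,\la aa^\ts\nabla\phi,\nabla(\gamma\cdot\nabla\phi)\ra\,dxdv.
\]
I will then decompose $\nabla(\gamma\cdot\nabla\phi)=(\nabla\gamma)\nabla\phi+(\nabla^2\phi)\gamma$ with $(\nabla\gamma)_{kj}=\partial_k\gamma_j$, and use a short index computation based on $(aa^\ts)^\ts=aa^\ts$ to derive the key identity $\la aa^\ts\nabla\phi,(\nabla^2\phi)\gamma\ra=\frac{1}{2}\gamma\cdot\nabla\Gamma_1(\phi,\phi)$, so that the Hessian piece contributes exactly $-T_2$.

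Finally, for the leftover term $-\int f(\gamma\cdot\nabla\phi)\Gamma_1(\phi,\phi)\,dxdv$, I will rewrite the weight $f(\gamma\cdot\nabla\phi)$ as $\nabla\cdot(f\gamma)$ and integrate by parts once more (using $\nabla\cdot\gamma=0$), converting this term into $+\int f\,\gamma\cdot\nabla\Gamma_1(\phi,\phi)\,dxdv=2T_2$. Assembling yields $T_1=2T_2-T_2-\int f\la aa^\ts\nabla\phi,(\nabla\gamma)\nabla\phi\ra\,dxdv$, so
\[
T_1-T_2=-\int f\,\la aa^\ts\nabla\phi,(\nabla\gamma)\nabla\phi\ra\,dxdv,
\]
which is the first claim; the second follows by the same argument with $aa^\ts$ replaced by $zz^\ts$. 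The main bookkeeping obstacle — really the only subtle point — will be ensuring that the three kinds of stray terms (those carrying $\nabla\psi$, the Hessian-times-$\gamma$ contraction, and the quadratic $\Gamma_1(\phi,\phi)(\gamma\cdot\nabla\phi)$) all cancel correctly, via a coordinated use of the symmetry of $aa^\ts$, the antisymmetry of $\mathsf J$, and the divergence-freeness of $\gamma$.
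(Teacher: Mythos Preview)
Your proposal is correct and follows essentially the same route as the paper's proof: both expand $\widetilde L\phi$, integrate the divergence term by parts, exploit $\nabla\log f=\nabla\phi-\nabla\psi$ to cancel the $\nabla\psi$ contributions via the symmetry of $aa^\ts$, identify $\langle aa^\ts\nabla\phi,(\nabla^2\phi)\gamma\rangle=\tfrac12\gamma\cdot\nabla\Gamma_1(\phi,\phi)$, and handle the remaining cubic term $f(\gamma\cdot\nabla\phi)\Gamma_1(\phi,\phi)$ by rewriting $f(\gamma\cdot\nabla\phi)=\nabla\cdot(f\gamma)$ and integrating by parts once more. Your bookkeeping is slightly more streamlined, but the ingredients and their order of use are the same.
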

\begin{proof}
We lay out the proof for the first identity with matrix $a$. The second identity for matrix $z$ can be proved in a similar manner. 
Recall 
\begin{equation*}
    \Gamma_{\mathcal{I}_{a,z}}(\delta\mathcal E,\delta\mathcal E)= (\widetilde L\delta\mathcal E+\widetilde L_z\delta\mathcal E) \la \nabla \delta\mathcal E,\gamma\ra -\frac{1}{2}\la \nabla \big(\Gamma_1(\delta\mathcal E,\delta\mathcal E)+\Gamma_1^z(\delta\mathcal E,\delta\mathcal E)\big),\gamma\ra=\Gamma_{\mathcal{I}_{a}}(\delta\mathcal E,\delta\mathcal E)+\Gamma_{\mathcal{I}_{z}}(\delta\mathcal E,\delta\mathcal E),
    \end{equation*}
    and 
    \beaa 
\widetilde L \delta\mathcal E&=& \nabla\cdot(aa^{\ts}\nabla\delta\mathcal E)+\la aa^{\ts}
\nabla(-\frac{v^2}{2}-U(x)-W\oast\rho(x)), \nabla \delta\mathcal E\ra,\\
\widetilde L_z \delta\mathcal E&=& \nabla\cdot(zz^{\ts}\nabla\delta\mathcal E)+\la zz^{\ts}
\nabla(-\frac{v^2}{2}-U(x)-W\oast\rho(x)), \nabla \delta\mathcal E\ra.\eeaa
Then we have
	\beaa
&&\int \Gamma_{\mathcal I_{a}}(\delta\mathcal E,\delta\mathcal E)f dxdv\\
&=&\int \Big[(\widetilde L\delta\mathcal E)\la \nabla \delta\mathcal E,\gamma\ra-\frac{1}{2}\la \nabla(\Gamma_1(\delta\mathcal E,\delta\mathcal E)),\gamma\ra  \Big]fdxdv  \\
&=&\int \Big[\nabla\cdot ((aa^{\ts})\nabla \delta\mathcal E)\la \nabla \delta\mathcal E,\gamma\ra +\la \nabla \delta\mathcal E,\gamma\ra  \la \nabla \delta\mathcal E, (aa^{\ts})\nabla(-\frac{v^2}{2}-U(x)-W\oast\rho(x)) \ra \Big] f dxdv\\
&&+ \int \frac{1}{2}\nabla\cdot(f \gamma ) \la \nabla \delta\mathcal E,(aa^{\ts})\nabla \delta\mathcal E\ra dxdv.\eeaa
Using the fact $\nabla\cdot(f\gamma)=f\la \nabla\delta\mathcal E,\gamma \ra=f\la\nabla \delta\mathcal E,\gamma\ra $, we have 
\bea\label{le eq 1}
&&\int \Gamma_{\mathcal I_{a}}(\delta\mathcal E,\delta\mathcal E)f dxdv \\
&=&\int \Big[\nabla\cdot ((aa^{\ts})\nabla \delta\mathcal E)\la \nabla \delta\mathcal E,\gamma\ra +\la \nabla \delta\mathcal E,\gamma\ra  \la \nabla \delta\mathcal E, aa^{\ts}\nabla(-\frac{v^2}{2}-U(x)-W\oast\rho(x)) \ra \Big] f dxdv\nonumber\\
&&+\int \frac{1}{2}\la  \nabla \delta\mathcal E, \gamma \ra  \la \nabla \delta\mathcal E,aa^{\ts}\nabla \delta\mathcal E\ra fdxdv.\nonumber
\eea
Applying integration by parts for the first term, we have
\beaa
&&\int \nabla\cdot (aa^{\ts}\nabla \delta\mathcal E)\la \nabla \delta\mathcal E,\gamma\ra  f dxdv\\
&=&- \int\Big[ \la  aa^{\ts}\nabla \delta\mathcal E,\nabla \log f \ra \la \nabla \delta\mathcal E,\gamma\ra  +\la aa^{\ts}\nabla \delta\mathcal E,\nabla^2 \delta\mathcal E \gamma\ra \Big] f dxdv\\
&&-\int \la  aa^{\ts}\nabla \delta\mathcal E,\nabla \gamma  \nabla \delta\mathcal E\ra f dxdv.
\eeaa
Plugging the above equality in \eqref{le eq 1} and using the fact $\nabla\delta \mathcal E=\nabla\log f-\nabla( -\frac{v^2}{2}-U(x)-W\oast\rho(x))$, we have
\beaa
&&\int \Gamma_{\mathcal I_{a}}(\delta\mathcal E,\delta\mathcal E)f dx\\
&=& \int -\frac{1}{2}(\nabla \delta\mathcal E,\gamma) (\nabla \delta\mathcal E, aa^{\ts}\nabla \delta\mathcal E)fdxdv\\
&&- \int\Big[ \la  aa^{\ts}\nabla \delta\mathcal E,\nabla \gamma \nabla \delta\mathcal E\ra  +\la aa^{\ts}\nabla \delta\mathcal E,\nabla^2 \delta\mathcal E \gamma\ra \Big] f dxdv\\
&=& \int -\frac{1}{2}\la \nabla f,\gamma\ra  \la \nabla \delta\mathcal E, aa^{\ts}\nabla\delta\mathcal E\ra dxdv\\
&& +\int\frac{1}{2}\la \nabla( -\frac{v^2}{2}-U(x)-W\oast\rho(x)), \gamma\ra   \la \nabla \delta\mathcal E, aa^{\ts}\nabla \delta\mathcal E\ra  fdxdv\\
&&- \int\Big[ \la  aa^{\ts}\nabla \delta\mathcal E,\nabla \gamma \nabla \delta\mathcal E\ra  +\la aa^{\ts}\nabla \delta\mathcal E,\nabla^2 \delta\mathcal E \gamma\ra \Big] f dxdv. 
\eeaa
Applying integration by parts for the first term and using the identity $\nabla\cdot(\gamma)=0$, we have
\beaa
&&\int -\frac{1}{2}(\nabla f,\gamma) (\nabla \delta\mathcal E, aa^{\ts}\nabla \delta\mathcal E)dxdv\\
&=&\int \frac{1}{2} \nabla\cdot ( \gamma \la \nabla \delta\mathcal E, aa^{\ts}\nabla \delta\mathcal E\ra )f dxdv\\
&=&\int \frac{1}{2} \la \gamma,\la \nabla \delta\mathcal E,\nabla(aa^{\ts})\nabla\delta\mathcal E\ra\ra +\int\la aa^{\ts}\nabla \delta\mathcal E,\nabla^2 \delta\mathcal E \gamma\ra  f dxdv.
\eeaa
By summing over all above terms, we obtain 
\beaa 
\int \Gamma_{\mathcal I_{a}}(\delta\mathcal E,\delta\mathcal E)fdxdv
=-\int  \la \nabla \gamma\nabla \delta\mathcal E,aa^{\ts}\nabla \delta\mathcal E\ra fdxdv
\eeaa
In the above equality, we use the fact that $\la \nabla( -\frac{v^2}{2}-U(x)-W\oast\rho(x)), \gamma\ra=0$  and $\nabla(aa^{\ts})=0$ since $a$ is constant matrix. This completes the proof.
\qed 
\end{proof}

\subsection{Proof of Lemma \ref{lemma: main lemma-R}}
The proof of Lemma \ref{lemma: main lemma-R} is divided into the following lemmas.  
\begin{lemma}\label{lemma: a}
For $\mathcal{DE}_a(f)$ defined in \eqref{fisher}, we have the following equality:
	\beaa
	\pa_t\mathcal{DE}_a(f)&=&-2\int[\widetilde\Gamma_2(\delta\mathcal E,\delta\mathcal E)-\la aa^{\ts}\nabla \delta\mathcal E, \nabla \gamma \nabla \delta\mathcal E\ra ] fdxdv\\
	&&-2\int  \la \nabla^2_{xy}W(x,y) (\mathsf J +aa^{\ts})\nabla_{y,\tilde v}\delta\mathcal E(y,\tilde v), aa^{\ts} \nabla_{x,v}\delta \mathcal E(x,v) \ra f(x,v)f(y,\tilde v) dxdvdyd\tilde v.
	\eeaa 
\end{lemma}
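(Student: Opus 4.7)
\textbf{Proof plan for Lemma \ref{lemma: a}.} My strategy is to differentiate $\mathcal{DE}_a(f)$ in time directly, substitute the PDE in its flux-gradient form from Proposition \ref{prop: decomposition of FP}, and then reorganize the resulting integrals into the three pieces in the statement: the local Bakry-\'Emery piece $\widetilde\Gamma_2$, the irreversible Gamma correction $\Gamma_{\mathcal I_a}$ (which contains $\nabla\gamma$), and a genuinely nonlocal mean-field remainder picking up $\nabla^2_{xy}W(x,y)$.

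First I would expand by the product rule:
\begin{equation*}
\partial_t \mathcal{DE}_a(f) = 2\int \Gamma_1(\delta\mathcal E, \partial_t\delta\mathcal E)\, f\, dxdv + \int \Gamma_1(\delta\mathcal E,\delta\mathcal E)\,\partial_t f\, dxdv.
\end{equation*}
Since $\delta\mathcal E = \log f + 1 + \tfrac{1}{2}|v|^2 + U + W\oast\rho$, we have $\partial_t\delta\mathcal E = \partial_t f/f + W\oast \partial_t\rho$. For the first piece I would integrate by parts in $v$ and exploit the identity $\nabla_v\!\cdot\!(f\nabla_v\phi)/f = \widetilde L\phi + \Gamma_1(\phi,\phi)$, collapsing the local contribution to $-2\int \widetilde L(\delta\mathcal E)\,\partial_t f - 2\int\Gamma_1\,\partial_t f$. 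Note that $W\oast\partial_t\rho(x)$ contributes to $\partial_t \delta\mathcal E$ but its \emph{direct} $v$-derivative vanishes; nevertheless, the quantity reappears nontrivially once we substitute $\partial_t f = \widetilde L^*f + \nabla\cdot(f\gamma)$ and move derivatives around, because $\gamma$ itself contains $\nabla_x(W\oast\rho)$.

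Second, I would replace $\partial_t f$ by $\widetilde L^* f + \nabla\cdot(f\gamma)$ and transfer the $\widetilde L^*$ back onto the test quantities by adjointness. Using the Bochner identity $\tfrac{1}{2}\widetilde L\Gamma_1(\phi,\phi) = \widetilde\Gamma_2(\phi,\phi) + \Gamma_1(\widetilde L\phi,\phi)$ together with $\int \Gamma_1(\widetilde L\phi,\phi)f = -\int \widetilde L\phi\cdot\widetilde L^*f$, the terms $-\int \widetilde L\Gamma_1\,f - 2\int \widetilde L^2\phi\cdot f$ collapse to $-2\int \widetilde\Gamma_2\, f$. The remaining $\gamma$-pieces $2\int\langle\gamma,\nabla \widetilde L\phi\rangle f + \int\langle\gamma,\nabla\Gamma_1\rangle f$ are converted via $\nabla\cdot(f\gamma) = f\langle\nabla\delta\mathcal E,\gamma\rangle$ (Proposition \ref{prop: decomposition of FP}) into $-2\int\Gamma_{\mathcal I_a}(\delta\mathcal E,\delta\mathcal E)\,f$, which is exactly the hypothesis of Lemma \ref{lemma gamma I} and yields the $+2\int\langle aa^{\ts}\nabla\delta\mathcal E,\nabla\gamma\,\nabla\delta\mathcal E\rangle f$ contribution appearing in the statement.

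Third, and most delicate, I would isolate the genuinely nonlocal contribution. Since $\gamma = \mathsf{J}\nabla[\tfrac{1}{2}|v|^2 + U + W\oast\rho]$ and $\widetilde L^* f = \nabla\cdot(faa^{\ts}\nabla\delta\mathcal E)$ both depend on $f$ through $W\oast\rho$, applying $\nabla$ to them (as we do when unpacking $\nabla\gamma$ inside $\Gamma_{\mathcal I_a}$ and when differentiating $\delta\mathcal E$ in time) produces factors of the form $\int\nabla_x\nabla_?W(x,y)\,f(y,\tilde v)\,dyd\tilde v$. Rewriting these expressions as double integrals against $f(x,v)f(y,\tilde v)$ and using the symmetry $W(x,y)=W(y,x)$ to fold the $(x,v)$ and $(y,\tilde v)$ variables, the remainder takes the claimed form
\begin{equation*}
-2\int_{\Omega\times\Omega}\bigl\langle \nabla^2_{xy}W(x,y)(\mathsf J + aa^{\ts})\nabla_{y,\tilde v}\delta\mathcal E(y,\tilde v),\; aa^{\ts}\nabla_{x,v}\delta\mathcal E(x,v)\bigr\rangle f(x,v)f(y,\tilde v)\,dxdvdyd\tilde v,
\end{equation*}
with the combination $\mathsf J + aa^{\ts}$ accounting for the two sources of $W\oast\rho$ (the Hamiltonian drift $\mathsf J\nabla(W\oast\rho)$ inside $\gamma$, and the dissipative projection $aa^{\ts}\nabla(W\oast\rho)$ inside $\widetilde L^*f$).

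\emph{Expected main obstacle.} The bookkeeping in the third step is the delicate part: one has to carefully separate which appearances of $W\oast\rho$ have already been absorbed into $\widetilde\Gamma_2$ and $\langle aa^{\ts}\nabla\delta\mathcal E,\nabla\gamma\,\nabla\delta\mathcal E\rangle$ and which remain as double integrals, and to recombine them into the compact form with $\nabla^2_{xy}W(\mathsf J + aa^{\ts})$. The symmetrization of the resulting double integral using $W(x,y) = W(y,x)$ is what makes the final structure amenable to the matrix bound that Lemma \ref{lemma: main lemma-R} then applies.
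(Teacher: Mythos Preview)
Your overall strategy matches the paper's: differentiate $\mathcal{DE}_a(f)$ by the product rule, substitute the flux--gradient form $\partial_t f=\widetilde L^*f+\nabla\cdot(f\gamma)$, and regroup into $\widetilde\Gamma_2$ plus $\Gamma_{\mathcal I_a}$ (then invoke Lemma~\ref{lemma gamma I}). Your Steps~1--2 are correct and recover exactly the paper's terms $\mathcal J_{12}+\mathcal J_{13}=-2\int\widetilde\Gamma_2\,f$ and $\mathcal J_{22}+\mathcal J_{23}=-2\int\Gamma_{\mathcal I_a}\,f$.

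Your Step~3, however, misidentifies the source of the nonlocal $\nabla^2_{xy}W$ remainder. ``Unpacking $\nabla\gamma$'' produces $\int\nabla^2_{xx}W(x,y)f(y,\tilde v)\,dyd\tilde v$, which is \emph{already} inside the local term $\langle aa^{\ts}\nabla\delta\mathcal E,\nabla\gamma\,\nabla\delta\mathcal E\rangle$ and is not a leftover. The only mechanism that can generate $\nabla^2_{xy}W$ (a derivative in $x$ and a derivative in $y$) is the $W\oast\partial_t\rho$ contribution to $\partial_t\delta\mathcal E$, once you substitute $\partial_t f(y,\tilde v)=\nabla_{y,\tilde v}\cdot\big(f(\mathsf J+aa^{\ts})\nabla_{y,\tilde v}\delta\mathcal E\big)$ and integrate by parts in $(y,\tilde v)$. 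But you correctly observed in Step~1 that $\nabla_v(W\oast\partial_t\rho)=0$, so this contribution vanishes in $\Gamma_1$. It does not ``reappear.''

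The resolution is that for the $a$-functional the nonlocal term in the statement is \emph{identically zero}: with $\nabla^2_{xy}W$ embedded as $\begin{pmatrix}\nabla^2_{xy}W&0\\0&0\end{pmatrix}$, one has $aa^{\ts}\,\nabla^2_{xy}W(\mathsf J+aa^{\ts})=0$, since $aa^{\ts}$ projects onto velocity components and $W$ is velocity-independent. So your computation is complete for Lemma~\ref{lemma: a} despite the confusion. The paper obtains the stated (zero) expression by integrating by parts \emph{before} projecting onto $v$-derivatives, keeping $W\oast\partial_t\rho$ and producing the terms $\mathcal J_{11}+\mathcal J_{21}$; this is done for uniformity with Lemma~\ref{lemma: z}, where $zz^{\ts}$ has a nonzero position block and the nonlocal term is genuinely nonzero. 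If you were to carry your reasoning over to the $z$-case unchanged, you would lose that term.
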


\begin{proof}
For $\mathcal{DE}_a(f)$ defined in \eqref{fisher}, and the structure of matrix $a$, we have 
	\beaa
\mathcal{DE}_a(f)&=&\int\la \nabla \delta \mathcal E, aa^{\ts}\nabla \delta \mathcal E\ra fdxdv.
	\eeaa 
	We derive the dissipation of $\mathcal{DE}_a(f)$ as below, where we denote $\nabla=\nabla_{x,v}$, 
\beaa 
&&\pa_t \mathcal{DE}_a(f)\\
&=&2\int\la \nabla \pa_t\delta \mathcal E, aa^{\ts}\nabla \delta \mathcal E\ra fdxdv+\int\la \nabla \delta \mathcal E, aa^{\ts}\nabla \delta \mathcal E\ra\pa_t fdxdv\\
&=&-2\int\la (\pa_t \delta \mathcal E)(x)\nabla\cdot (f aa^{\ts}\nabla \delta \mathcal E)dxdv+\int\la \nabla \delta \mathcal E, aa^{\ts}\nabla \delta \mathcal E\ra\pa_t fdxdv\\
&=&-2\int   (\delta^2 \mathcal E(x,v,y,\tilde v)\pa_t f(y,\tilde v)\nabla\cdot (f aa^{\ts}\nabla \delta \mathcal E)(x,v)dxdv+\int\la \nabla \delta \mathcal E, aa^{\ts}\nabla \delta \mathcal E\ra\pa_t fdxdv\\
&=& -2\int   (W(x,y)\pa_t f(y,\tilde v))\nabla\cdot (f aa^{\ts}\nabla \delta \mathcal E)(x,v)dxdvdyd\tilde v- 2\int   (\frac{1}{f}\pa_t f)\nabla\cdot (f aa^{\ts}\nabla \delta \mathcal E) dxdv\\
&&+\int\la \nabla \delta \mathcal E, aa^{\ts}\nabla \delta \mathcal E\ra\pa_t fdxdv.
\eeaa 
Hence
\beaa 
&&\pa_t \mathcal{DE}_a(f)\\
&=&-2\int W(x,y)[\nabla_{y,\tilde v}\cdot (f aa^{\ts}\nabla_{y,\tilde v}\delta\mathcal E(y,\tilde v))+\nabla_{y,\tilde v}\cdot (f\gamma  )]\nabla_{x,v}\cdot (faa^{\ts} \nabla_{x,v}\delta \mathcal E(x,v) )dxdvdyd\tilde y\\
&&-2\int \frac{1}{f}[\nabla_{x,v}\cdot (f aa^{\ts}\nabla_{x, v}\delta\mathcal E)+\nabla_{x, v}\cdot (f\gamma  )]\nabla_{x,v}\cdot (faa^{\ts} \nabla_{x,v}\delta \mathcal E )dxdv\\
&&+\int\la \nabla \delta \mathcal E, aa^{\ts}\nabla \delta \mathcal E\ra[\nabla\cdot (f aa^{\ts}\nabla_{x, v}\delta\mathcal E)+\nabla_{x, v}\cdot (f\gamma  )]dxdv\\ 
&=&-2\int W(x,y)\nabla_{y,\tilde v}\cdot (f aa^{\ts}\nabla_{y,\tilde v}\delta\mathcal E(y,\tilde v))\nabla_{x,v}\cdot (faa^{\ts} \nabla_{x,v}\delta \mathcal E(x,v) )dxdvdyd\tilde v\cdots \mathcal J_{11}\\
&&-2\int W(x,y)\nabla_{y,\tilde v}\cdot (f\gamma  )\nabla_{x,v}\cdot (faa^{\ts} \nabla_{x,v}\delta \mathcal E(x,v) )dxdvdyd\tilde v\cdots \mathcal J_{21}\\
&&-2\int \frac{1}{f}\nabla_{x,v}\cdot (f aa^{\ts}\nabla_{x, v}\delta\mathcal E)\nabla_{x,v}\cdot (faa^{\ts} \nabla_{x,v}\delta \mathcal E )dxdv\cdots \mathcal J_{12}\\
&&-2\int \frac{1}{f}\nabla_{x, v}\cdot (f\gamma  )\nabla_{x,v}\cdot (faa^{\ts} \nabla_{x,v}\delta \mathcal E )dxdv\cdots \mathcal J_{22}
\eeaa 
\beaa 
&&+\int\la \nabla \delta \mathcal E, aa^{\ts}\nabla \delta \mathcal E\ra\nabla\cdot (f aa^{\ts}\nabla_{x, v}\delta\mathcal E)dxdv\cdots \mathcal J_{13}\\
&&+\int\la \nabla \delta \mathcal E, aa^{\ts}\nabla \delta \mathcal E\ra \nabla_{x, v}\cdot (f\gamma  )dxdv\cdots \mathcal J_{23}\\
&=&\mathcal J_{11}+ \mathcal J_{12}+ \mathcal J_{13}+\mathcal J_{21}+\mathcal J_{22}+\mathcal J_{23}.
\eeaa
We first have
\beaa 
\mathcal J_{11}=-2\int \la \nabla_{xy}^2 W(x,y) aa^{\ts}\nabla_{y,\tilde v}\delta\mathcal E(y,\tilde v), aa^{\ts} \nabla_{x,v}\delta \mathcal E(x,v) \ra f(x,v)f(y,\widetilde v )dxdvdyd\tilde v.
\eeaa 
Next, using the identity $\nabla_{x,v}\cdot (f\gamma)=\nabla\cdot (f\mathsf J\nabla\delta\mathcal E)$, we have 
\beaa 
\mathcal J_{21}&=&-2\int W(x,y)\nabla_{y,\tilde v}\cdot (f\gamma  )\nabla_{x,v}\cdot (faa^{\ts} \nabla_{x,v}\delta \mathcal E )dxdvdyd\tilde y\\
&=& -2\int  \la \nabla^2_{xy}W(x,y)\mathsf J \nabla_{y,\tilde v}\delta\mathcal E(y,\tilde v), aa^{\ts} \nabla_{x,v}\delta \mathcal E(x,v) \ra f(x,v)f(y,\tilde v) dxdvdyd\tilde v.
\eeaa  
Furthermore, applying the identity $
\nabla_{x,v}\cdot (f\gamma)=f \la \nabla\delta\mathcal E,\gamma\ra$, and  we have 
\beaa 
&&\mathcal J_{22}+\mathcal J_{23}\\
&=&-2\int \nabla_{x,v}\cdot(f\gamma) \frac{ \nabla_{x,v}\cdot (faa^{\ts} \nabla_{x,v}\delta \mathcal E ) }{f}fdxdv+\int\la \nabla \delta \mathcal E, aa^{\ts}\nabla \delta \mathcal E\ra \nabla_{x, v}\cdot (f\gamma  )  dxdv\\
&=& -2 \int \nabla_{x,v}\cdot(f\gamma) \Big[\la  \nabla_{x,v} \log f,  aa^{\ts} \nabla_{x,v}\delta \mathcal E \ra +  \nabla_{x,v}\cdot (aa^{\ts} \nabla_{x,v}\delta \mathcal E )\Big]dxdv\\
&&+\int\la \nabla \delta \mathcal E, aa^{\ts}\nabla \delta \mathcal E\ra \nabla_{x,v}\cdot(f\gamma) dxdv\\ 
&=&-2 \int \nabla_{x,v}\cdot(f\gamma) \Big[\la  \nabla_{x,v} \delta \mathcal E,  aa^{\ts} \nabla_{x,v}\delta \mathcal E \ra + \widetilde L\delta \mathcal E\Big]dxdv+\int\la \nabla \delta \mathcal E, aa^{\ts}\nabla \delta \mathcal E\ra \nabla_{x,v}\cdot(f\gamma) dxdv\\
&=&-2\int \la \nabla\delta \mathcal E,\gamma\ra \tilde L\delta \mathcal E  f dxdv-\int\la \nabla \delta \mathcal E, aa^{\ts}\nabla \delta \mathcal E\ra \nabla_{x,v}\cdot(f\gamma)dxdv\\
&=& -2\Big[\int \la \nabla\delta \mathcal E,\gamma\ra \tilde L\delta \mathcal E f dxdv-\frac{1}{2}\int
\la \nabla \Gamma_1(\delta\mathcal E,\delta\mathcal E),\gamma\ra  f dxdv \Big]\\
&=& -2\int\widetilde \Gamma_{\mathcal I_a}(\delta\mathcal E,\delta\mathcal E)fdxdv.
\eeaa 
Similarly, we have 
\beaa 
\mathcal J_{12}+\mathcal J_{13}&=&-2\int \frac{1}{f}\nabla\cdot (f aa^{\ts}\nabla_{x, v}\delta\mathcal E)\nabla_{x,v}\cdot (faa^{\ts} \nabla_{x,v}\delta \mathcal E )dxdv\\
&&+\int\la \nabla \delta \mathcal E, aa^{\ts}\nabla \delta \mathcal E\ra\nabla\cdot (f aa^{\ts}\nabla_{x, v}\delta\mathcal E)dxdv\\
&=&-2\int \Big[\la  \nabla_{x,v} \delta \mathcal E,  aa^{\ts} \nabla_{x,v}\delta \mathcal E \ra + \widetilde L\delta \mathcal E\Big]\widetilde L^*fdxdv
+\int\la \nabla \delta \mathcal E, aa^{\ts}\nabla \delta \mathcal E\ra\widetilde L^* f dxdv\\
&=&-2 \int\Big[\frac{1}{2} \widetilde L \Gamma_1(\delta\mathcal E,\delta\mathcal E)-\Gamma_{1}(\widetilde L\delta\mathcal E,\delta\mathcal E ) \Big]fdxdv \\
&=& -2 \int \widetilde \Gamma_2 (\delta\mathcal E,\delta\mathcal E)fdxdv.
\eeaa 
Applying Lemma \ref{lemma gamma I} and combining the above terms, we finish the proof.
	\qed 
\end{proof}

\begin{lemma}\label{lemma: z}
For $\mathcal{DE}_z(f)$ defined in \eqref{fisher z}, we have the following equality
	\beaa
	\pa_t\mathcal{DE}_z(f)&=&-2\int[\widetilde\Gamma_2^z(\delta\mathcal E,\delta\mathcal E)-\la zz^{\ts}\nabla \delta\mathcal E, \nabla \gamma \nabla \delta\mathcal E\ra]  fdxdv\\
	&&-2\int  \la \nabla^2_{xy}W(x,y) (\mathsf J +aa^{\ts})\nabla_{y,\tilde v}\delta\mathcal E(y,\tilde v), zz^{\ts} \nabla_{x,v}\delta \mathcal E(x,v) \ra f(x,v)f(y,\tilde v) dxdvdyd\tilde v.
	\eeaa 
\end{lemma}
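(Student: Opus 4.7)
The plan is to mirror the proof of Lemma~\ref{lemma: a}, now applied to the functional $\mathcal{DE}_z$ built from the matrix $zz^{\ts}$ rather than $aa^{\ts}$. Starting from $\mathcal{DE}_z(f)=\int\langle\nabla\delta\mathcal E,zz^{\ts}\nabla\delta\mathcal E\rangle f\,dxdv$ and differentiating in $t$, I would shift one gradient onto $f$ by integration by parts, then substitute
\begin{equation*}
\partial_t\delta\mathcal E(x,v)=\tfrac{\partial_t f(x,v)}{f(x,v)}+\int W(x,y)\partial_t f(y,\tilde v)\,dyd\tilde v,
\end{equation*}
and insert the PDE $\partial_t f=\widetilde L^{*}f+\nabla\cdot(f\gamma)$. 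This yields six contributions $\mathcal J_{ij}^{z}$ in exact parallel with the $\mathcal J_{ij}$ appearing in the proof of Lemma~\ref{lemma: a}, the only structural change being that the outer matrix is $zz^{\ts}$ while the PDE still carries $aa^{\ts}$ and $\mathsf J$.

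The cross-convolution pair $\mathcal J_{11}^{z}+\mathcal J_{21}^{z}$ is handled by double integration by parts in $(y,\tilde v)$ and in $(x,v)$. Because $\nabla_{y,\tilde v}W(x,y)$ has no $\tilde v$-component, $\mathcal J_{11}^{z}$ vanishes (equivalently, the $aa^{\ts}$-piece in the $(\mathsf J+aa^{\ts})$ symbol contributes zero), whereas $\mathcal J_{21}^{z}$, after rewriting $\nabla\cdot(f\gamma)=\nabla\cdot(f\mathsf J\nabla\delta\mathcal E)$, produces the $\mathsf J$-part. Written in the convention of Lemma~\ref{lemma: a}, the two pieces combine to give
\begin{equation*}
-2\int\langle\nabla^{2}_{xy}W(\mathsf J+aa^{\ts})\nabla_{y,\tilde v}\delta\mathcal E,\,zz^{\ts}\nabla_{x,v}\delta\mathcal E\rangle f(x,v)f(y,\tilde v)\,dxdvdyd\tilde v.
\end{equation*}
The drift-local pair $\mathcal J_{22}^{z}+\mathcal J_{23}^{z}$ collapses, via the pointwise identity $\tfrac{1}{f}\widetilde L_z^{*}f=\widetilde L_z\delta\mathcal E+\Gamma_1^{z}(\delta\mathcal E,\delta\mathcal E)$ together with $\nabla\cdot(f\gamma)=f\langle\nabla\delta\mathcal E,\gamma\rangle$ and $\langle\nabla H,\gamma\rangle=0$, onto $-2\int\Gamma_{\mathcal I_z}(\delta\mathcal E,\delta\mathcal E)f\,dxdv$; Lemma~\ref{lemma gamma I} then converts this into $2\int\langle zz^{\ts}\nabla\delta\mathcal E,\nabla\gamma\nabla\delta\mathcal E\rangle f\,dxdv$.

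The hard part is the remaining pair $\mathcal J_{12}^{z}+\mathcal J_{13}^{z}=-2\int\tfrac{1}{f}(\widetilde L^{*}f)(\widetilde L_z^{*}f)\,dxdv+\int\Gamma_1^{z}(\delta\mathcal E,\delta\mathcal E)\widetilde L^{*}f\,dxdv$. In the $a$-case the two matrices coincided and the reduction to $-2\int\widetilde\Gamma_2 f\,dxdv$ was immediate, but here the outer matrix is $zz^{\ts}$ while $\widetilde L^{*}$ carries $aa^{\ts}$, and this mismatch is the main obstacle. My plan is to substitute $\tfrac{1}{f}\widetilde L^{*}f=\widetilde L\delta\mathcal E+\Gamma_1(\delta\mathcal E,\delta\mathcal E)$ and then close the identity by proving the symmetry
\begin{equation*}
\int\Gamma_1(\delta\mathcal E,\delta\mathcal E)\,\widetilde L_z^{*}f\,dxdv=\int\Gamma_1^{z}(\delta\mathcal E,\delta\mathcal E)\,\widetilde L^{*}f\,dxdv,
\end{equation*}
which, after one integration by parts, reduces to the scalar equality $u^{\ts}(aa^{\ts})(\nabla^{2}\delta\mathcal E)(zz^{\ts})u=u^{\ts}(zz^{\ts})(\nabla^{2}\delta\mathcal E)(aa^{\ts})u$ with $u=\nabla\delta\mathcal E$, a consequence of the simultaneous symmetry of $aa^{\ts}$, $zz^{\ts}$, and $\nabla^{2}\delta\mathcal E$. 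With this symmetry in hand, a further integration by parts recognizes $\mathcal J_{12}^{z}+\mathcal J_{13}^{z}=-2\int\widetilde\Gamma_2^{z}(\delta\mathcal E,\delta\mathcal E)f\,dxdv$, and summing all six pieces yields the formula asserted in the lemma.
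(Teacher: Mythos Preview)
Your proposal is correct and follows essentially the same six-term decomposition and grouping as the paper's own proof. The one point of genuine difference is the symmetry lemma you invoke for the diffusive pair $\mathcal J_{12}^{z}+\mathcal J_{13}^{z}$: you prove
\[
\int \Gamma_1(\delta\mathcal E,\delta\mathcal E)\,\widetilde L_z^{*}f\,dxdv=\int \Gamma_1^{z}(\delta\mathcal E,\delta\mathcal E)\,\widetilde L^{*}f\,dxdv,
\]
which after one integration by parts is the pointwise scalar identity $u^{\ts}(aa^{\ts})(\nabla^{2}\delta\mathcal E)(zz^{\ts})u=u^{\ts}(zz^{\ts})(\nabla^{2}\delta\mathcal E)(aa^{\ts})u$. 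The paper instead proves the exchange identity
\[
\int\big[\Gamma_1^{z}(\widetilde L\delta\mathcal E,\delta\mathcal E)-\Gamma_1(\widetilde L_z\delta\mathcal E,\delta\mathcal E)\big]f\,dxdv=0,
\]
establishing it by a longer coordinate expansion that matches three pairs of terms. Both identities rely on $a,z$ being constant and on the symmetry of $aa^{\ts}$, $zz^{\ts}$, and $\nabla^{2}\delta\mathcal E$; yours is shorter to verify. Your observation that $\mathcal J_{11}^{z}$ vanishes (because $aa^{\ts}$ projects onto the $v$-block while $\nabla_{y,\tilde v}W$ lives in the $y$-block) is also correct---the paper keeps that term formally in the $(\mathsf J+aa^{\ts})$ symbol but it contributes nothing.
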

\begin{proof}
For $\mathcal{DE}_z(f)$ defined in \eqref{fisher}, and the structure of matrix $a$, we have 
	\beaa
\mathcal{DE}_z(f)&=&\int\la \nabla \delta \mathcal E, zz^{\ts}\nabla \delta \mathcal E\ra fdxdv.
	\eeaa 
	We derive the dissipation of $\mathcal{DE}_z(f)$ as below:
\beaa 
&&\pa_t \mathcal{DE}_z(f)\\
&=&2\int\la \nabla \pa_t\delta \mathcal E, zz^{\ts}\nabla \delta \mathcal E\ra fdxdv+\int\la \nabla \delta \mathcal E, zz^{\ts}\nabla \delta \mathcal E\ra\pa_t fdxdv\\
&=&-2\int\la (\pa_t \delta \mathcal E)(x)\nabla\cdot (f zz^{\ts}\nabla \delta \mathcal E)dxdv+\int\la \nabla \delta \mathcal E, zz^{\ts}\nabla \delta \mathcal E\ra\pa_t fdxdv\\
&=&-2\int   (\delta^2 \mathcal E(x,v,y,\tilde v)\pa_t f(y,\tilde v)\nabla\cdot (f zz^{\ts}\nabla \delta \mathcal E)(x,v)dxdv+\int\la \nabla \delta \mathcal E, zz^{\ts}\nabla \delta \mathcal E\ra\pa_t fdxdv\\
&=& -2\int   (W(x,y)\pa_t f(y,\tilde v))\nabla\cdot (f zz^{\ts}\nabla \delta \mathcal E)(x,v)dxdvdyd\tilde v- 2\int   (\frac{1}{f}\pa_t f)\nabla\cdot (f zz^{\ts}\nabla \delta \mathcal E) dxdv\\
&&+\int\la \nabla \delta \mathcal E, zz^{\ts}\nabla \delta \mathcal E\ra\pa_t fdxdv.
\eeaa 
Plugging in the equation for $\pa_tf$, we have
\beaa 
&&\pa_t \mathcal{DE}_z(f)\\
&=&-2\int W(x,y)[\nabla_{y,\tilde v}\cdot (f aa^{\ts}\nabla_{y,\tilde v}\delta\mathcal E(y,\tilde v))+\nabla_{y,\tilde v}\cdot (f\gamma  )]\nabla_{x,v}\cdot (fzz^{\ts} \nabla_{x,v}\delta \mathcal E(x,v) )dxdvdyd\tilde y\\
&&-2\int \frac{1}{f}[\nabla_{x,v}\cdot (f aa^{\ts}\nabla_{x, v}\delta\mathcal E)+\nabla_{x, v}\cdot (f\gamma  )]\nabla_{x,v}\cdot (fzz^{\ts} \nabla_{x,v}\delta \mathcal E )dxdv\\
&&+\int\la \nabla \delta \mathcal E, zz^{\ts}\nabla_{x,v} \delta \mathcal E\ra[\nabla\cdot (f aa^{\ts}\nabla_{x, v}\delta\mathcal E)+\nabla_{x, v}\cdot (f\gamma  )]dxdv
\eeaa 
\beaa 
&=&-2\int W(x,y)\nabla_{y,\tilde v}\cdot (f aa^{\ts}\nabla_{y,\tilde v}\delta\mathcal E(y,\tilde v))\nabla_{x,v}\cdot (fzz^{\ts} \nabla_{x,v}\delta \mathcal E(x,v) )dxdvdyd\tilde v\cdots \mathcal J^z_{11}\\
&&-2\int W(x,y)\nabla_{y,\tilde v}\cdot (f\gamma  )\nabla_{x,v}\cdot (fzz^{\ts} \nabla_{x,v}\delta \mathcal E(x,v) )dxdvdyd\tilde v\cdots \mathcal J^z_{21}\\
&&-2\int \frac{1}{f}\nabla_{x,v}\cdot (f aa^{\ts}\nabla_{x, v}\delta\mathcal E)\nabla_{x,v}\cdot (fzz^{\ts} \nabla_{x,v}\delta \mathcal E )dxdv\cdots \mathcal J^z_{12}\\
&&-2\int \frac{1}{f}\nabla_{x, v}\cdot (f\gamma  )\nabla_{x,v}\cdot (fzz^{\ts} \nabla_{x,v}\delta \mathcal E )dxdv\cdots \mathcal J^z_{22}\\
&&+\int\la \nabla \delta \mathcal E, zz^{\ts}\nabla \delta \mathcal E\ra\nabla_{x,v}\cdot (f aa^{\ts}\nabla_{x, v}\delta\mathcal E)dxdv\cdots \mathcal J^z_{13}\\
&&+\int\la \nabla \delta \mathcal E, zz^{\ts}\nabla \delta \mathcal E\ra \nabla_{x, v}\cdot (f\gamma  )dxdv\cdots \mathcal J^z_{23}\\
&=&\mathcal J^z_{11}+ \mathcal J^z_{12}+ \mathcal J^z_{13}+\mathcal J^z_{21}+\mathcal J^z_{22}+\mathcal J^z_{23}.
\eeaa
We first have
\beaa 
\mathcal J^z_{11}&=&-2\int \la \nabla_{xy}^2 W(x,y) aa^{\ts}\nabla_{y,\tilde v}\delta\mathcal E(y,\tilde v), zz^{\ts} \nabla_{x,v}\delta \mathcal E(x,v) \ra f(x,v)f(y,\widetilde v )dxdvdyd\tilde v.
\eeaa 
Next, using the identity $\nabla_{x,v}\cdot (f\gamma)=\nabla_{x,v}\cdot (f\mathsf J\nabla_{x,v}\delta\mathcal E)$, we have 
\beaa 
\mathcal J^z_{21}&=&-2\int W(x,y)\nabla_{y,\tilde v}\cdot (f\gamma  )\nabla_{x,v}\cdot (fzz^{\ts} \nabla_{x,v}\delta \mathcal E )dxdvdyd\tilde y\\
&=& -2\int  \la \nabla^2_{xy}W(x,y)\mathsf J \nabla_{y,\tilde v}\delta\mathcal E(y,\tilde v), zz^{\ts} \nabla_{x,v}\delta \mathcal E(x,v) \ra f(x,v)f(y,\tilde v) dxdvdyd\tilde v.
\eeaa  
Furthermore, applying the identity $
\nabla_{x,v}\cdot (f\gamma)=f \la \nabla_{x,v}\delta\mathcal E,\gamma\ra$, we have 
\beaa
&&\mathcal J_{22}+\mathcal J_{23}\\
&=&-2\int \nabla_{x,v}\cdot(f\gamma) \frac{ \nabla_{x,v}\cdot (fzz^{\ts} \nabla_{x,v}\delta \mathcal E ) }{f}fdxdv+\int\la \nabla \delta \mathcal E, zz^{\ts}\nabla \delta \mathcal E\ra \nabla_{x, v}\cdot (f\gamma  )  dxdv\\
&=& -2 \int \nabla_{x,v}\cdot(f\gamma) \Big[\la  \nabla_{x,v} \log f,  zz^{\ts} \nabla_{x,v}\delta \mathcal E \ra +  \nabla_{x,v}\cdot (zz^{\ts} \nabla_{x,v}\delta \mathcal E )\Big]dxdv\\
&&+\int\la \nabla \delta \mathcal E, zz^{\ts}\nabla \delta \mathcal E\ra \nabla_{x,v}\cdot(f\gamma) dxdv\\ 
&=&-2 \int \nabla_{x,v}\cdot(f\gamma) \Big[\la  \nabla_{x,v} \delta \mathcal E,  zz^{\ts} \nabla_{x,v}\delta \mathcal E \ra + \widetilde L_z\delta \mathcal E\Big]dxdv+\int\la \nabla \delta \mathcal E, zz^{\ts}\nabla \delta \mathcal E\ra \nabla_{x,v}\cdot(f\gamma) dxdv\\
&=&-2\int \la \nabla\delta \mathcal E,\gamma\ra \tilde L_z\delta \mathcal E  f dxdv-\int\la \nabla \delta \mathcal E, zz^{\ts}\nabla \delta \mathcal E\ra \nabla_{x,v}\cdot(f\gamma)dxdv\\
&=& -2\Big[\int \la \nabla\delta \mathcal E,\gamma\ra \tilde L_z\delta \mathcal E f dxdv-\frac{1}{2}\int
\la \nabla \Gamma_1^z(\delta\mathcal E,\delta\mathcal E),\gamma\ra  f dxdv \Big]\\
&=& -2\int\widetilde \Gamma_{\mathcal I_z}(\delta\mathcal E,\delta\mathcal E)fdxdv.
\eeaa 
Similarly, we have 
\beaa 
\mathcal J_{12}^z+\mathcal J_{13}^z&=&-2\int \frac{1}{f}\nabla\cdot (f aa^{\ts}\nabla_{x, v}\delta\mathcal E)\nabla_{x,v}\cdot (fzz^{\ts} \nabla_{x,v}\delta \mathcal E )dxdv\\
&&+\int\la \nabla \delta \mathcal E, zz^{\ts}\nabla \delta \mathcal E\ra\nabla\cdot (f aa^{\ts}\nabla_{x, v}\delta\mathcal E)dxdv\\
&=&-2\int \Big[\la  \nabla_{x,v} \delta \mathcal E,  zz^{\ts} \nabla_{x,v}\delta \mathcal E \ra + \widetilde L_z\delta \mathcal E\Big]\widetilde L^*fdxdv
+\int\la \nabla \delta \mathcal E, zz^{\ts}\nabla \delta \mathcal E\ra\widetilde L^* f dxdv\\
&=&-2 \int\Big[\frac{1}{2} \widetilde L \Gamma_1^z(\delta\mathcal E,\delta\mathcal E)-\Gamma_{1}(\widetilde L_z\delta\mathcal E,\delta\mathcal E ) \Big]fdxdv \\
&=&-2 \int\Big[\frac{1}{2} \widetilde L \Gamma_1^z(\delta\mathcal E,\delta\mathcal E)-\Gamma_1^z(\widetilde L\delta\mathcal E,\delta\mathcal E)+\Gamma_1^z(\widetilde L\delta\mathcal E,\delta\mathcal E)- \Gamma_{1}(\widetilde L_z\delta\mathcal E,\delta\mathcal E ) \Big]fdxdv \\
&=& -2 \int \widetilde \Gamma_2 (\delta\mathcal E,\delta\mathcal E)fdxdv,
\eeaa 
where the last equality follows from the following observation 
\bea\label{tri linear form zero}
\int [\Gamma_1^z(\widetilde L\delta\mathcal E,\delta\mathcal E)- \Gamma_{1}(\widetilde L_z\delta\mathcal E,\delta\mathcal E ) ]fdxdv=0,
\eea 
for constant matrices $a$ and $z$. We first observe that 
\begin{equation*}
\begin{split}
&    \int \Gamma_1^z( \widetilde L\delta\mathcal E, \delta\mathcal E) fdxdv\\
=& \int\Gamma_1^z\Big(\la\nabla( -\frac{v^2}{2}-U(x)-W\oast\rho(x)), aa^{\ts}\nabla \delta\mathcal E\ra+\nabla\cdot(aa^{\ts}\nabla\delta\mathcal E), \delta\mathcal E\Big)fdxdv\\
=&\int\Gamma_1^z\Big(\Gamma_1( -\frac{v^2}{2}-U(x)-W\oast\rho(x), \delta\mathcal E), \delta\mathcal E\Big)f +\Gamma_1^z\Big(\nabla\cdot(aa^{\ts}\nabla\delta\mathcal E), \delta\mathcal E\Big)fdxdv\\
=&\int\Gamma_1^z\Big(\Gamma_1( -\frac{v^2}{2}-U(x)-W\oast\rho(x), \delta\mathcal E), \delta\mathcal E\Big)f dxdv-\int \nabla\cdot(aa^{\ts}\nabla\delta\mathcal E)\nabla\cdot(f zz^{\ts}\nabla\delta\mathcal E)dxdv\\
=&\int\Gamma_1^z\Big(\Gamma_1( -\frac{v^2}{2}-U(x)-W\oast\rho(x), \delta\mathcal E), \delta\mathcal E\Big)f dxdv-\int \nabla\cdot(aa^{\ts}\nabla\delta\mathcal E)\nabla\cdot(f zz^{\ts}\nabla\delta\mathcal E)dxdv\\
=&\int\Gamma_1^z\Big(\Gamma_1( -\frac{v^2}{2}-U(x)-W\oast\rho(x), \delta\mathcal E), \delta\mathcal E\Big)f dxdv-\int \nabla\cdot(aa^{\ts}\nabla\delta\mathcal E)\nabla\cdot( zz^{\ts}\nabla\delta\mathcal E)fdxdv\\
&-\int \nabla\cdot(aa^{\ts}\nabla\delta\mathcal E)\la \nabla\log f,zz^{\ts}\nabla\delta\mathcal E\ra fdxdv.
\end{split}
\end{equation*}
Similarly, we have 
\begin{equation*}
\begin{split}
&    \int \Gamma_1( \widetilde L_z\delta\mathcal E, \delta\mathcal E) fdxdv\\
=&\int\Gamma_1\Big(\Gamma_1^z( -\frac{v^2}{2}-U(x)-W\oast\rho(x), \delta\mathcal E), \delta\mathcal E\Big)f dxdv-\int \nabla\cdot(aa^{\ts}\nabla\delta\mathcal E)\nabla\cdot( zz^{\ts}\nabla\delta\mathcal E)fdxdv\\
&-\int \nabla\cdot(zz^{\ts}\nabla\delta\mathcal E)\la \nabla\log f,aa^{\ts}\nabla\delta\mathcal E\ra fdxdv.
\end{split}
\end{equation*}
Furthermore, by direct expansion of the gradient, we have 
\beaa 
&&\int \nabla\cdot(aa^{\ts}\nabla\delta\mathcal E)\la \nabla\log f,zz^{\ts}\nabla\delta\mathcal E\ra fdxdv\\
&=&-\int \la a^{\ts}\nabla\delta\mathcal E, a^{\ts}\nabla \la z^{\ts}\nabla\log f,z^{\ts}\nabla\delta\mathcal E\ra\ra  fdxdv\\
&=& -\int \sum_{i=1}^d\sum_{j=1}^d\sum_{\hat i,\hat j,\tilde i,\tilde j=1}^{2d}\Big(a^{\ts}_{i\hat i}\pa_{x_{\hat i}}\delta\mathcal E a^{\ts}_{i\tilde i}\pa_{x_{\tilde i}}[z^{\ts}_{j\hat j}\pa_{x_{\hat j}}\log fz^{\ts}_{j\tilde j}\pa_{x_{\tilde j}}\delta\mathcal E ]\Big)  f dxdv\\
&=&-\int \sum_{i=1}^d\sum_{j=1}^d\sum_{\hat i,\hat j,\tilde i,\tilde j=1}^{2d}\Big( a^{\ts}_{i\hat i}a^{\ts}_{i\tilde i}z^{\ts}_{j\hat j}z^{\ts}_{j\tilde j}\pa_{x_{\hat i}}\delta\mathcal E \pa^2_{x_{\hat j}x_{\tilde i}}\log f\pa_{x_{\tilde j}}\delta\mathcal E \Big)  f dxdv\\
&&-\int \sum_{i=1}^d\sum_{j=1}^d\sum_{\hat i,\hat j,\tilde i,\tilde j=1}^{2d}\Big( a^{\ts}_{i\hat i}a^{\ts}_{i\tilde i}z^{\ts}_{j\hat j}z^{\ts}_{j\tilde j}\pa_{x_{\hat i}}\delta\mathcal E \pa_{x_{\hat j}}\log f\pa^2_{x_{\tilde j}x_{\tilde i}}\delta\mathcal E \Big)  f dxdv\\
&=&\int \nabla\cdot(zz^{\ts}\nabla\delta\mathcal E)\la \nabla\log f,aa^{\ts}\nabla\delta\mathcal E\ra fdxdv,
\eeaa 
where we denote $(x_1,\cdots,x_d,v_1,\cdots,v_d)=(x_1,\cdots,x_{2d})$ and use the fact that $a$ and $z$ are constant matrices. Similarly, we have 
\beaa 
&&\int\Gamma_1^z\Big(\Gamma_1( -\frac{v^2}{2}-U(x)-W\oast\rho(x), \delta\mathcal E), \delta\mathcal E\Big)f dxdv\\
&=&\int\Gamma_1\Big(\Gamma_1^z( -\frac{v^2}{2}-U(x)-W\oast\rho(x), \delta\mathcal E), \delta\mathcal E\Big)f dxdv,
\eeaa 
which proves equation \eqref{tri linear form zero}. Applying Lemma \ref{lemma gamma I} and combining the above terms, we complete the proof.  \qed 
\end{proof}

We are now ready to prove Lemma \ref{lemma: main lemma-R}. Recall the mean-field Langevin dynamics
\begin{equation}\label{SDE}
\left\{\begin{aligned}
       dx_t=&v_t dt \\
   dv_t=&(- v_t-\nabla_x\widetilde V(x_t, f))dt+\sqrt{2}dB_t,
\end{aligned}\right.
\end{equation}
where $f$ is the solution of PDE \eqref{Kinetic FP}, and we denote
\begin{equation*}
\widetilde V(x,f)=\int_{\Omega} W(x,y)f(t,y,\tilde v)dyd\tilde v+U(x). 
\end{equation*}

\begin{proof}[Proof of Lemma \ref{lemma: main lemma-R}]
We first present the explicit formulation of matrix $\mathfrak R$.
Following from Lemma \ref{lemma: a} and Lemma \ref{lemma: z}, we have 
	\beaa
	&&\pa_t\mathcal{DE}_{a,z}(f)\\
	&=&-2\int[\widetilde\Gamma_2(\delta\mathcal E,\delta\mathcal E)+\widetilde\Gamma_2^z(\delta\mathcal E,\delta\mathcal E)-\la (aa^{\ts}+zz^{\ts})\nabla \delta\mathcal E, \nabla \gamma \nabla \delta\mathcal E\ra]  fdxdv\\
	&&-2\int \nabla^2_{xy}W(x,y) (\la\mathsf J +aa^{\ts})\nabla_{y,\tilde v}\delta\mathcal E(y,\tilde v), (aa^{\ts} +zz^{\ts})\nabla_{x,v}\delta \mathcal E(x,v) \ra f(x,v)f(y,\tilde v) dxdvdyd\tilde v.
	\eeaa 
Following from the definition of $\widetilde\Gamma_2$ and $\widetilde\Gamma_2^z$ from Section 4 and Proposition \ref{bochner formula} we have 
\beaa 
\widetilde\Gamma_2(\delta\mathcal E,\delta\mathcal E)+\widetilde\Gamma_2^z(\delta\mathcal E,\delta\mathcal E)
&=&\sum_{i=1}^d\Big[z_1^2|\pa^2_{x_ix_i}\delta\mathcal{E}|^2+(1+z_2^2)|\pa^2_{x_{i+d}x_{i+d}}\delta\mathcal{E}|^2+2z_1z_2|\pa^2_{x_ix_{i+d}}\delta\mathcal{E}|^2\Big]\nonumber\\
&&-\sum_{i=1}^d\sum_{\hat k=1}^{2d}a_i^{\ts}\nabla [aa^{\ts}\nabla(-\frac{v^2}{2}-U(x)-W\oast\rho(x))]_{\hat k} \partial_{x_{\hat k}}\delta\mathcal{E}a^{\ts}_i\nabla \delta\mathcal{E} \nonumber\\
&&-\sum_{k=1}^d\sum_{\hat k=1}^{2d}z_k^{\ts}\nabla [aa^{\ts}\nabla(-\frac{v^2}{2}-U(x)-W\oast\rho(x))]_{\hat k} \partial_{x_{\hat k}}\delta\mathcal{E}z^{\ts}_k\nabla \delta\mathcal{E},\nonumber\\
&=&\|\mathfrak{Hess}\delta\mathcal{E}\|_{\text{F}}^2+\mathfrak R_{a,z}\mathfrak (\delta\mathcal E,\delta\mathcal{E}),
\eeaa 
where we define
\beaa 
\|\mathfrak{Hess}\delta\mathcal{E}\|_{\text{F}}^2&=& \sum_{i=1}^d\Big[z_1^2|\pa^2_{x_ix_i}\delta\mathcal{E}|^2+(1+z_2^2)|\pa^2_{x_{i+d}x_{i+d}}\delta\mathcal{E}|^2+2z_1z_2|\pa^2_{x_ix_{i+d}}\delta\mathcal{E}|^2\Big]\\
\mathfrak R_{a,z}(\delta\mathcal E,\delta\mathcal{E})&=&-\sum_{i=1}^d\sum_{\hat k=1}^{2d}a_i^{\ts}\nabla [aa^{\ts}\nabla(-\frac{v^2}{2}-U(x)-W\oast\rho(x))]_{\hat k} \partial_{x_{\hat k}}\delta\mathcal{E}a^{\ts}_i\nabla \delta\mathcal{E} \nonumber\\
&&-\sum_{k=1}^d\sum_{\hat k=1}^{2d}z_k^{\ts}\nabla [aa^{\ts}\nabla(-\frac{v^2}{2}-U(x)-W\oast\rho(x))]_{\hat k} \partial_{x_{\hat k}}\delta\mathcal{E}z^{\ts}_k\nabla \delta\mathcal{E}.
\eeaa 

According to the above computation, we have
\beaa
&& \int_\Omega[ (\widetilde\Gamma_2+ \widetilde \Gamma_2^{z} )(\delta\mathcal E,\delta\mathcal E)
-  \la (aa^{\ts}+zz^{\ts})\nabla \delta\mathcal E, \nabla \gamma \nabla \delta\mathcal E\ra ]fdxdv\\
&&+\int_{\Omega\times\Omega}  \la \nabla^2_{xy}W(x,y) (\mathsf J +aa^{\ts})\nabla_{y,\tilde v}\delta\mathcal E(y,\tilde v), (aa^{\ts} +zz^{\ts})\nabla_{x,v}\delta \mathcal E(x,v) \ra f(x,v)f(y,\tilde v) dxdvdyd\tilde v\\
&\ge& \mathfrak R_1+\mathfrak R_2,
\eeaa
where we denote 
\beaa 
\mathfrak R_1&=&
\int_{\Omega\times\Omega} \la  \nabla^2_{xy}W(x,y)(\mathsf J +aa^{\ts})\nabla_{y,\tilde v}\delta\mathcal E(y,\tilde v), (aa^{\ts} +zz^{\ts})\nabla_{x,v}\delta \mathcal E(x,v) \ra f(x,v)f(y,\tilde v) dxdvdyd\tilde v.\\
\mathfrak R_2&=&\int_{\Omega}[ \mathfrak R_{a,z}\mathfrak (\delta\mathcal E,\delta\mathcal{E})-\la (aa^{\ts}+zz^{\ts})\nabla \delta\mathcal E, \nabla \gamma \nabla \delta\mathcal E\ra] f(x,v)dxdv.
\eeaa 
If Assumption \ref{main assumption} holds, we have 
\beaa 
&&\int_{\Omega}[ (\widetilde\Gamma_2+ \widetilde \Gamma_2^{z} )(\delta\mathcal{E},\delta\mathcal{E}) 
-  \la (aa^{\ts}+zz^{\ts})\nabla \delta\mathcal{E}, \nabla \gamma \nabla \delta\mathcal{E}\ra 
]fdxdv\\
&&+\int_{\Omega\times\Omega}  \la\nabla^2_{xy}W(x,y)(\mathsf J +aa^{\ts})\nabla_{y,\tilde v}\delta\mathcal E(y,\tilde v), (aa^{\ts} +zz^{\ts})\nabla_{x,v}\delta \mathcal E(x,v) \ra f(x,v)f(y,\tilde v) dxdvdyd\tilde v\\
 &\ge &\mathfrak R_1+\mathfrak R_2= \int_{\Omega\times\Omega}\mathfrak{R}(\nabla\delta\mathcal{E},\nabla\delta\mathcal{E})f(t,x,v)f(t,y,\tilde v)dxdvdyd\tilde v\\
 &\ge&  \lambda\int_\Omega  (\Gamma_1(\delta\mathcal{E},\delta\mathcal{E})+\Gamma_1^z(\delta\mathcal{E},\delta\mathcal{E}))fdxdv
 =\lambda [\mathcal{DE}_a(f)+\mathcal{DE}_z(f)]. 
\eeaa
Thus, we only need to show that $\mathfrak R$ is indeed the matrix defined in Definition \ref{definition mean field info matrix}. With some abuse of notations, we denote $\mathfrak R=\mathfrak R_1+\mathfrak R_2$. In the following, we derive the explicit formulation of $\mathfrak R$ as defined in Definition \ref{definition mean field info matrix} and Definition \ref{definition mean field info matrix-2}.

\noindent\textbf{ Case 1:} For d=1, the $\mathfrak R$ has the following two parts. for constant matrices $a=(0\quad 1)^{\ts}$ and $z=(z_1\quad z_2)^{\ts}$,
\beaa
 \mathfrak R_1&=&
 \int  \la \nabla^2_{xy}W(x,y)(\mathsf J +aa^{\ts})\nabla_{y,\tilde v}\delta\mathcal E(y,\tilde v), (aa^{\ts} +zz^{\ts})\nabla_{x,v}\delta \mathcal E(x,v) \ra f(x,v)f(y,\tilde v) dxdvdyd\tilde v.\\
&=&\int (\nabla\delta\mathcal E(y,\tilde v))^{\ts} \mathsf{sym}\Big((aa^{\ts}+zz^{\ts})^{\ts}\nabla^2_{xy}W(x,y)(aa^{\ts}+\mathsf J) \Big)(\nabla\delta\mathcal E(x,v))f(x,v)f(y,\tilde v)dxdvdyd\tilde v.
\eeaa 
To be precise, we denote $\int=\int_{\Omega\times\Omega}$ in $\mathfrak R_1$.
Plugging in the matrices $a$, $z$, and $\mathsf J$, we have the following symmetrization of the matrix,  
\beaa 
&&\mathsf{sym}\Big((aa^{\ts}+zz^{\ts})^{\ts}\nabla^2_{xy}W(x,y)(aa^{\ts}+\mathsf J) \Big)\\
&=&\mathsf{sym}\Big((aa^{\ts}+zz^{\ts})\nabla^2_{xy}W(x,y)(aa^{\ts}+\mathsf J) \Big)\\
&=& \mathsf{sym}\Big(\begin{pmatrix}
	z_1^2 & z_1z_2 \\
	z_1z_2& (1+z_2^2)
\end{pmatrix}_{2\times 2}\nabla^2_{xy}W(x,y)
\begin{pmatrix}
	0&-1\\
	1&1
\end{pmatrix}_{2\times 2}  \Big)\\
&=&\mathsf{sym}\Big(\begin{pmatrix}
	z_1^2 & z_1z_2 \\
	z_1z_2& (1+z_2^2)
\end{pmatrix}_{2\times 2}\begin{pmatrix}
0&-	\nabla^2_{xy}W(x,y)\\
	0& 0
\end{pmatrix}_{2d\times2d}  \Big)\\
&=&\mathsf{sym}\begin{pmatrix}
	0&-z_1^2\nabla^2_{xy}W(x,y)\\
0&-z_1z_2\nabla^2_{xy}W(x,y)
\end{pmatrix} \\
&=&\begin{pmatrix}
0&-\frac{z_1^2}{2}\nabla^2_{xy}W(x,y)\\
	-\frac{z_1^2}{2}\nabla^2_{xy}W(x,y)&-z_1z_2\nabla^2_{xy}W(x,y)
\end{pmatrix}.
\eeaa 
This implies that 
\beaa 
\mathfrak R_1=\int (\nabla\delta\mathcal E(y,\tilde v))^{\ts} \begin{pmatrix}
0&-\frac{z_1^2}{2}\nabla^2_{xy}W(x,y)\\
	-\frac{z_1^2}{2}\nabla^2_{xy}W(x,y)&-z_1z_2\nabla^2_{xy}W(x,y)
\end{pmatrix}(\nabla\delta\mathcal E(x,v)) f(x,v)f(y,\tilde v)dxdvdyd\tilde v.
\eeaa 
As for the second term, we have 
\beaa 
\mathfrak R_2&=&\int_{\Omega}[ \mathfrak R_{a,z}\mathfrak (\delta\mathcal E,\delta\mathcal{E})-\la (aa^{\ts}+zz^{\ts})\nabla \delta\mathcal E, \nabla \gamma \nabla \delta\mathcal E\ra] f(x,v)dxdv\\
&=&\int_{\Omega}(\nabla\delta\mathcal{E})^{\ts}\mathfrak R_2 \nabla\delta\mathcal{E} f(x,v)dxdv.
\eeaa 
By direct computation and matrix symmetrization, the matrix $\mathfrak R_2$ has the following representation, 
\beaa
\mathfrak{R}_2&=&\begin{pmatrix}
0&0\\
0& -\frac{\partial^2 (-\frac{v^2}{2}-\widetilde V(x,f) ) }{\partial v^2}
\end{pmatrix}+\frac{1}{2}\Big[  \begin{pmatrix}
0\\
-z^{\ts}_1\nabla( \frac{\partial  (-\frac{v^2}{2}-\widetilde V(x,f) )}{\partial v})
\end{pmatrix} z^{\ts}+z \begin{pmatrix}
0&
-z^{\ts}_1\nabla( \frac{\partial  (-\frac{v^2}{2}-\widetilde V(x,f) )}{\partial v})
\end{pmatrix} \Big]\\
&&-\frac{1}{2}[(\nabla \gamma)^{\ts}aa^{\ts}+aa^{\ts}\nabla\gamma]
-\frac{1}{2}[(\nabla \gamma)^{\ts}zz^{\ts}+zz^{\ts}\nabla\gamma],
\eeaa
with $(\nabla\gamma)_{ij}=\nabla_i\gamma_j$, such that 
 \[ 
 \nabla\gamma=\begin{pmatrix}
 	0& \int \nabla^2_{xx} W(x,y)f(t,y,v)dydv+\nabla_{xx}U(x)\\
 	-1& 0
 \end{pmatrix}=\begin{pmatrix}
 	0&\nabla_{xx}^2\widetilde V(x,f)\\
 	-1&0
 \end{pmatrix} , 
 \]
 and 
\[ zz^{\ts}=\begin{pmatrix}
 	z_1^2&z_1z_2\\
 	z_1z_2&z_2^2
 \end{pmatrix},\quad aa^{\ts}=\begin{pmatrix}
 	0&0\\
 	0&1
 \end{pmatrix}.
 \]
 By direct computations, we have  
 \beaa
\mathfrak{R}_2&=&\begin{pmatrix}
0&0\\
0& 1
\end{pmatrix}+\frac{1}{2}\Big[  \begin{pmatrix}
0\\
z_2
\end{pmatrix} (z_1,z_2)+\begin{pmatrix}
	z_1\\
	z_2
\end{pmatrix} \begin{pmatrix}
0&
z_2
\end{pmatrix} \Big]-\frac{1}{2}[(\nabla \gamma)^{\ts}aa^{\ts}+aa^{\ts}\nabla\gamma]
\\
&&-\frac{1}{2}[(\nabla \gamma)^{\ts}zz^{\ts}+zz^{\ts}\nabla\gamma]
 \\
&=&\begin{pmatrix}
	0&0\\
	0&1
\end{pmatrix}+\begin{pmatrix}
	0&\frac{1}{2}z_1z_2\\
	\frac{1}{2}z_1z_2&z_2^2
\end{pmatrix}-\begin{pmatrix}
	0& -\frac{1}{2}\\
	-\frac{1}{2}& 0
\end{pmatrix}-\begin{pmatrix}
	-z_1z_2& \frac{1}{2}[\nabla^2\widetilde Vz_1^2-z_2^2 ]\\
	\frac{1}{2}[\nabla^2\widetilde Vz_1^2-z_2^2 ]& \nabla^2 \widetilde V z_1z_2
\end{pmatrix}\\
&=&\begin{pmatrix}
	z_1z_2& \frac{1}{2}[z_1z_2+z_2^2-\nabla^2\widetilde V z_1^2+1 ]\\
	\frac{1}{2}[z_1z_2+z_2^2-\nabla^2\widetilde V z_1^2+1 ]& 1+z_2^2-\nabla^2\widetilde V z_1z_2
\end{pmatrix}.
\eeaa
For notation simplicity, we denote $\mathsf U(x,v)=\nabla \delta\mathcal E(x,v)$ and $\mathsf U(y,\tilde v)=\nabla \delta\mathcal E(y,\tilde v)$. Recall that $\widetilde V(x,f)=\int_{\Omega} W(x,y)f(t,y,\tilde v)dyd\tilde v+U(x)$, we have
\beaa
&&\mathfrak R_1+\mathfrak R_2\\ 
&=&\int_{\Omega\times\Omega}\mathsf U^{\ts}(y,\tilde v)\begin{pmatrix}
0&-\frac{z_1^2}{2}\nabla^2_{xy}W(x,y)\\
	-\frac{z_1^2}{2}\nabla^2_{xy}W(x,y)&-z_1z_2\nabla^2_{xy}W(x,y)
\end{pmatrix}\mathsf U(x,v) f(x,v)f(y,\tilde v)dxdvdyd\tilde v \\
&&+\int_{\Omega}\mathsf U^{\ts}(x,v)\begin{pmatrix}
	z_1z_2& \frac{1}{2}[z_1z_2+z_2^2-\nabla^2_{xx}\widetilde V z_1^2+1 ]\\
	\frac{1}{2}[z_1z_2+z_2^2-\nabla^2_{xx}\widetilde V z_1^2+1 ]& 1+z_2^2-\nabla^2_{xx}\widetilde V z_1z_2
\end{pmatrix}  \mathsf U(x,v) f(x,v) dxdv\\ 
&=&\int_{\Omega\times\Omega}\mathsf U^{\ts}(y,\tilde v)\begin{pmatrix}
0&-\frac{z_1^2}{2}\nabla^2_{xy}W(x,y)\\
	-\frac{z_1^2}{2}\nabla^2_{xy}W(x,y)&-z_1z_2\nabla^2_{xy}W(x,y)
\end{pmatrix}\mathsf U(x,v) f(x,v)f(y,\tilde v)dxdvdyd\tilde v \\
&&+\int_{\Omega}\mathsf U^{\ts}(x,v)\begin{pmatrix}
	z_1z_2& \frac{1}{2}[z_1z_2+z_2^2-\nabla^2_{xx}U z_1^2+1 ]\\
	\frac{1}{2}[z_1z_2+z_2^2-\nabla^2_{xx}U z_1^2+1 ]& 1+z_2^2-\nabla^2_{xx} U z_1z_2
\end{pmatrix}  \mathsf U(x,v) f(x,v) dxdv\\
&&+\int_{\Omega\times \Omega}\mathsf U^{\ts}(x,v)\begin{pmatrix}
	0& -\frac{z_1^2}{2}\nabla^2_{xx}W(x,y)  \\
	-\frac{z_1^2}{2}\nabla^2_{xx}W(x,y)& -z_1z_2\nabla^2_{xx} W(x,y)
\end{pmatrix}  \mathsf U(x,v)f(y,\tilde v) f(x,v)dyd\tilde v dxdv\\
&=& \mathcal T_1 +\mathcal T_2+\mathcal T_3.
\eeaa 
Using the fact that $W(x,y)=W(y,x)$, and $\nabla_{xx}^2W(x,y)=\nabla^2_{yy}W(y,x)$, we have
\beaa 
&&\mathcal T_1+\mathcal T_3\\
&=&\frac{1}{2} \int_{\Omega\times \Omega}\mathsf U^{\ts}(x,v)\begin{pmatrix}
	0& -\frac{z_1^2}{2}\nabla^2_{xx}W(x,y)  \\
	-\frac{z_1^2}{2}\nabla^2_{xx}W(x,y)& -z_1z_2\nabla^2_{xx} W(x,y)
\end{pmatrix}  \mathsf U(x,v)f(y,\tilde v) f(x,v)dyd\tilde v dxdv\\
&&+\frac{1}{2}\int_{\Omega\times \Omega}\mathsf U^{\ts}(y,\tilde v)\begin{pmatrix}
	0& -\frac{z_1^2}{2}\nabla^2_{yy}W(y,x)  \\
	-\frac{z_1^2}{2}\nabla^2_{yy}W(y,x)& -z_1z_2\nabla^2_{yy} W(y,x)
\end{pmatrix}  \mathsf U(y,\tilde v)f(y,\tilde v) f(x,v)dyd\tilde v dxdv\\
&&+\int_{\Omega\times\Omega}\mathsf U^{\ts}(y,\tilde v)\begin{pmatrix}
0&-\frac{z_1^2}{2}\nabla^2_{xy}W(x,y)\\
	-\frac{z_1^2}{2}\nabla^2_{xy}W(x,y)&-z_1z_2\nabla^2_{xy}W(x,y)
\end{pmatrix}\mathsf U(x,v) f(x,v)f(y,\tilde v)dxdvdyd\tilde v\\
&=&\int_{\Omega\times\Omega}\Big[ \frac{1}{2}\mathsf U^{\ts}(x,v)\mathsf A_1(x,y) \mathsf U(x,v)+\frac{1}{2}\mathsf U^{\ts}(y,\tilde v)\mathsf A_1(y,x) \mathsf U(y,\tilde v)\\
&&\qq\qq+\mathsf U^{\ts}(y,\tilde v)\mathsf B(x,y) \mathsf U(x,v)\Big]f(x,v)f(y,\tilde v)dxdvdyd\tilde v
\eeaa 
\beaa 
&=&\frac{1}{2}\int_{\Omega\times\Omega} \begin{pmatrix}
	\mathsf U^{\ts}(x,v) & \mathsf U^{\ts}(y,\tilde v)
\end{pmatrix}\begin{pmatrix}
	\mathsf A_1(x,y)&	\mathsf B(x,y)\\
		\mathsf B(x,y)&	\mathsf A_1(y,x)
\end{pmatrix}\begin{pmatrix}
	\mathsf U(x,v)\\
	\mathsf U(y,\tilde v)
\end{pmatrix} f(x,v)f(y,\tilde v)dxdvdyd\tilde v,
\eeaa 
where we denote 
\beaa
\mathsf A_1(x,y)&=& \begin{pmatrix}
	0& -\frac{z_1^2}{2}\nabla^2_{xx}W(x,y)  \\
	-\frac{z_1^2}{2}\nabla^2_{xx}W(x,y)& -z_1z_2\nabla^2_{xx} W(x,y)
\end{pmatrix},\\
\mathsf A_1(y,x)&=& \begin{pmatrix}
	0& -\frac{z_1^2}{2}\nabla^2_{yy}W(y,x)  \\
	-\frac{z_1^2}{2}\nabla^2_{yy}W(y,x)& -z_1z_2\nabla^2_{yy} W(y,x)
\end{pmatrix}, \\
\mathsf B(x,y)&=& \begin{pmatrix}
0&-\frac{z_1^2}{2}\nabla^2_{xy}W(x,y)\\
	-\frac{z_1^2}{2}\nabla^2_{xy}W(x,y)&-z_1z_2\nabla^2_{xy}W(x,y)
\end{pmatrix}.
\eeaa
Combining $\mathcal T_2$ with the above term, we have 
\beaa
&&\mathcal T_1+\mathcal T_2+\mathcal T_3\\
&=&\frac{1}{2}\int_{\Omega\times\Omega} \begin{pmatrix}
	\mathsf U^{\ts}(x,v) & \mathsf U^{\ts}(y,\tilde v)
\end{pmatrix}\begin{pmatrix}
	\mathsf A_1(x,y)+2\mathsf A_2(x,y)&	\mathsf B(x,y)\\
		\mathsf B(x,y)&	\mathsf A_1(y,x)
\end{pmatrix}\begin{pmatrix}
	\mathsf U(x,v)\\
	\mathsf U(y,\tilde v)
\end{pmatrix}\\
&&\qq\qq f(x,v)f(y,\tilde v)dxdvdyd\tilde v\\
&=&\frac{1}{2}\int_{\Omega\times\Omega} \begin{pmatrix}
	\mathsf U^{\ts}(x,v) & \mathsf U^{\ts}(y,\tilde v)
\end{pmatrix}\begin{pmatrix}
	\mathsf A_1(x,y)+\mathsf A_2(x,y)&	\mathsf B(x,y)\\
		\mathsf B(x,y)&	\mathsf A_1(y,x)+\mathsf A_2(y,x)
\end{pmatrix}\begin{pmatrix}
	\mathsf U(x,v)\\
	\mathsf U(y,\tilde v)
\end{pmatrix}\\
&&\qq\qq f(x,v)f(y,\tilde v)dxdvdyd\tilde v,
\eeaa
where we denote 
\bea\label{A2 matrix} 
\mathsf A_2(x,y)&=&\begin{pmatrix}
	z_1z_2& \frac{1}{2}[z_1z_2+z_2^2-\nabla^2_{xx}U(x) z_1^2+1 ]\\
	\frac{1}{2}[z_1z_2+z_2^2-\nabla^2_{xx}U(x) z_1^2+1 ]& 1+z_2^2-\nabla^2_{xx} U(x) z_1z_2
\end{pmatrix}\nonumber \\
\mathsf A_2(y,x)&=&\begin{pmatrix}
	z_1z_2& \frac{1}{2}[z_1z_2+z_2^2-\nabla^2_{yy}U(y) z_1^2+1 ]\\
	\frac{1}{2}[z_1z_2+z_2^2-\nabla^2_{yy}U(y) z_1^2+1 ]& 1+z_2^2-\nabla^2_{yy} U(y) z_1z_2
\end{pmatrix}.
\eea
This produced the matrix tensor for Definition \ref{definition mean field info matrix-2}.
Combining the above matrix terms, we have
\beaa 
\mathfrak R=\frac{1}{2}\begin{pmatrix}
\mathsf A(x,y)&\mathsf B(x,y)\\
\mathsf B(x,y)&\mathsf A(y,x)
\end{pmatrix},
\eeaa 
where we denote
\beaa
\mathsf A(x,y)&=& \mathsf A_1(x,y)+\mathsf A_2(x,y)\\
&=&\begin{pmatrix}
	0& -\frac{z_1^2}{2}\nabla^2_{xx}W(x,y)  \\
	-\frac{z_1^2}{2}\nabla^2_{xx}W(x,y)& -z_1z_2\nabla^2_{xx} W(x,y)
\end{pmatrix}\\
&&+\begin{pmatrix}
	z_1z_2& \frac{1}{2}[(1+z_1z_2+z_2^2)-z_1^2\nabla^2_{xx}U(x) ]\\
	\frac{1}{2}[(1+z_1z_2+z_2^2)-z_1^2\nabla^2_{xx}U(x)]& (1+z_2^2)-z_1z_2\nabla^2_{xx} U(x) 
	\end{pmatrix}\\
	&=& \begin{pmatrix}
	z_1z_2& \frac{1}{2}[(1+z_1z_2+z_2^2)-z_1^2\nabla^2_{xx}V(x,y) ]\\
	\frac{1}{2}[(1+z_1z_2+z_2^2)-z_1^2\nabla^2_{xx}V(x,y)]& (1+z_2^2)-z_1z_2\nabla^2_{xx} V(x,y) 
	\end{pmatrix},
	\eeaa 
	and 
	\beaa 
\mathsf A(y,x)	&=& \begin{pmatrix}
	z_1z_2& \frac{1}{2}[(1+z_1z_2+z_2^2)-z_1^2\nabla^2_{yy}V(y,x) ]\\
	\frac{1}{2}[(1+z_1z_2+z_2^2)-z_1^2\nabla^2_{yy}V(y,x)]& (1+z_2^2)-z_1z_2\nabla^2_{yy} V(y,x) 
	\end{pmatrix},
	\eeaa
with $V(x,y)=W(x,y)+U(x)$, and	
\beaa	
	\mathsf B(x,y)&=& \begin{pmatrix}
0&-\frac{z_1^2}{2}\nabla^2_{xy}W(x,y)\\
	-\frac{z_1^2}{2}\nabla^2_{xy}W(x,y)&-z_1z_2\nabla^2_{xy}W(x,y)
\end{pmatrix}.
\eeaa
This produces the matrix tensor for Definition \ref{definition mean field info matrix}.

\noindent\textbf{Case 2: $d\ge 2$.}
We first demonstrate the derivation for $d=2$. By direction computations, we have 
\beaa
a=\begin{pmatrix}
	0&0&1&0\\
	0&0&0&1\\
\end{pmatrix}^{\ts}, \quad z=\begin{pmatrix}
	z_1&0&z_2&0\\
	0&z_1&0&z_2\\
\end{pmatrix}^{\ts},
\eeaa 
and 
\beaa 
&&\mathsf{sym}\Big((aa^{\ts}+zz^{\ts})^{\ts}\nabla^2_{xy}W(x,y)(aa^{\ts}+\mathsf J) \Big)\\
&=&\mathsf{sym}\Big((aa^{\ts}+zz^{\ts})\nabla^2_{xy}W(x,y)(aa^{\ts}+\mathsf J) \Big)\\
&=& \mathsf{sym}\Big(\begin{pmatrix}
	z_1^2\mathsf I_2 & z_1z_2 \mathsf I_2\\
	z_1z_2\mathsf I_2& (1+z_2^2)\mathsf I_2
\end{pmatrix}_{4\times 4}\begin{pmatrix} \nabla^2_{xy}W(x,y)&0\\
0&0\end{pmatrix}
\begin{pmatrix}
	0&-\mathsf I_2\\
	\mathsf I_2&\mathsf I_2
\end{pmatrix}_{2d\times 2d}  \Big)\\
&=&\begin{pmatrix}
0&-\frac{z_1^2}{2}\nabla^2_{xy}W(x,y)\\
	-\frac{z_1^2}{2}\nabla^2_{xy}W(x,y)&-z_1z_2\nabla^2_{xy}W(x,y)
\end{pmatrix}.
\eeaa 
Similar to $d=1$, by routine computation, we have
\beaa
\mathfrak R_2
&=& \begin{pmatrix}
	0&0&0&0&\\
	0&0&0&0&\\
	0&0&1&0&\\
	0&0&0&1&\\
\end{pmatrix}+\begin{pmatrix}
	0&0&\frac{1}{2} z_1z_2&0&\\
	0&0&0&\frac{1}{2}z_1z_2\\
	\frac{1}{2}z_1z_2&0&z_2^2&0\\
	0&\frac{1}{2}z_1z_2&0&z_2^2\\
\end{pmatrix}-\frac{1}{2}\begin{pmatrix}
	0&0&-1&0\\
	0&0&0&-1\\
	-1&0&0&0\\
	0&-1&0&0\\
\end{pmatrix}\\
&&-\begin{pmatrix}
	-z_1z_2&0&\frac{1}{2}(z_1^2\nabla_{x_1x_1}\widetilde V-z_2^2)&\frac{1}{2} z_1^2\nabla_{x_1x_2}\widetilde V\\
	0&-z_1z_2&\frac{1}{2} z_1^2\nabla_{x_1x_2}\widetilde V&\frac{1}{2}(z_1^2\nabla_{x_2x_2}\widetilde V-z_2^2)\\
	\frac{1}{2}(z_1^2\nabla_{x_1x_1}\widetilde V-z_2^2)&\frac{1}{2} z_1^2\nabla_{x_1x_2}\widetilde V& z_1z_2\nabla_{x_1x_1}\widetilde V&z_1z_2\nabla_{x_1x_2}\widetilde V\\
	\frac{1}{2} z_1^2\nabla_{x_1x_2}\widetilde V&\frac{1}{2}(z_1^2\nabla_{x_1x_1}\widetilde V-z_2^2)&z_1z_2\nabla_{x_1x_2}\widetilde V&z_1z_2\nabla_{x_2x_2}\widetilde V
\end{pmatrix},
\eeaa
where $\widetilde V(x,f)=\int_{\Omega} W(x,y)f(t,y,v)dvdy+U(x)$. We then combine the two matrices following the proof for $d=1$. Similarly, for any $d\ge 2$, we have 
\beaa 
\mathfrak R(z,x,y)=\frac{1}{2}\begin{pmatrix}
\mathsf A(x,y)&\mathsf B(x,y)\\
\mathsf B(x,y)&\mathsf A(y,x)
\end{pmatrix}\in\hR^{4d\times 4d},
\eeaa 
where
\beaa
\mathsf A(x,y)&=&\begin{pmatrix}
	0& -\frac{z_1^2}{2}\nabla^2_{xx}W(x,y)  \\
	-\frac{z_1^2}{2}\nabla^2_{xx}W(x,y)& -z_1z_2\nabla^2_{xx} W(x,y)
\end{pmatrix}, \\
&&+\begin{pmatrix}
	z_1z_2\mathsf I_d& \frac{1}{2}[(1+z_1z_2+z_2^2)\mathsf I_d-z_1^2\nabla^2_{xx}U(x) ]\\
	\frac{1}{2}[(1+z_1z_2+z_2^2)\mathsf I_d-z_1^2\nabla^2_{xx}U(x)]& (1+z_2^2)\mathsf I_d-z_1z_2\nabla^2_{xx} U(x) 
	\end{pmatrix}\\ 
&=&\begin{pmatrix}
	z_1z_2\mathsf I_d& \frac{1}{2}[(1+z_1z_2+z_2^2)\mathsf I_d-z_1^2\nabla^2_{xx}V(x,y)]\\
	\frac{1}{2}[(1+z_1z_2+z_2^2)\mathsf I_d-z_1^2\nabla^2_{xx}V(x,y)]& (1+z_2^2)\mathsf I_d-z_1z_2\nabla^2_{xx} V(x,y)
	\end{pmatrix},
	\eeaa
for $V(x,y)=W(x,y)+U(x)$, and	
\beaa	
	\mathsf B(x,y)&=& \begin{pmatrix}
0&-\frac{z_1^2}{2}\nabla^2_{xy}W(x,y)\\
	-\frac{z_1^2}{2}\nabla^2_{xy}W(x,y)&-z_1z_2\nabla^2_{xy}W(x,y)
\end{pmatrix}.
\eeaa
Separating the matrix $\mathsf A(x,y)$ into $\mathsf A_1(x,y)$ and $\mathsf A_2(x,y)$, we derive the matrix defined in Definition \ref{definition mean field info matrix-2}.
\qed 
\end{proof}

%
%
%

\end{document}